\title{On Rees algebras and de Jonqui\`eres transformations}
\author{Matthew Weaver}
\address{School of Mathematical and Statistical Sciences, Arizona State University, Wexler Hall, Tempe AZ 85281}
\email{matthew.j.weaver@asu.edu}
\date{}	
\newtheorem{thmx}{Theorem}
\newtheorem{thm}{Theorem}[section]
\newtheorem*{thm-nonum}{Theorem}
\newtheorem{prop}[thm]{Proposition}
\newtheorem{lemma}[thm]{Lemma}
\newtheorem{cor}[thm]{Corollary}
\numberwithin{equation}{section}
\theoremstyle{definition}
\newtheorem{rem}[thm]{Remark}
\newtheorem{set}[thm]{Setting}
\newtheorem{notat}[thm]{Notation}
\newtheorem{defn}[thm]{Definition}
\newtheorem{ex}[thm]{Example}
\Crefname{thm}{Theorem}{Theorems}
\Crefname{ex}{Example}{Examples}
\def\A{\mathcal{A}}
\def\D{\mathcal{D}}
\def\G{\mathcal{G}}
\def\I{\mathcal{I}}
\def\J{\mathcal{J}}
\def\K{\mathcal{K}}
\def\L{\mathcal{L}}
\def\m{\mathfrak{m}}
\def\n{\mathfrak{n}}
\def\p{\mathfrak{p}}
\def\P{\mathbb{P}}
\def\R{\mathcal{R}}
\def\S{\mathcal{S}}
\def\F{\mathcal{F}}
\def\x{\underline{x}}
\def\Crem{\mathfrak{C}}
\def\deJonq{\mathfrak{J}}
\def\Cremdeg{\mathfrak{d}}
\def\coker{\mathop{\rm coker}}
\def\dim{\mathop{\rm dim}}
\def\depth{\mathop{\rm depth}}
\def\hgt{\mathop{\rm ht}}
\def\spec{\mathop{\rm Spec}}
\def\bideg{\mathop{\rm bideg}}
\newcommand*{\dit}{{\scalebox{0.5}{$\bullet$}}}
\begin{document}

\begin{abstract}
We recall a higher dimension analog of the classic plane de Jonqui\`eres transformations, as given by Hassanzadeh and Simis in \cite{HS14}. Such a parameterization defines a birational map from $\P^{n-1}$ to a hypersurface in $\P^{n}$, and a natural question that arises is how to obtain its implicit equation. We pass from the image of this map to its graph, and implicitize the Rees algebra of the ideal of the de Jonqui\`eres map when its underlying Cremona support is tame. We then consider the Rees rings of ideals of generalized de Jonqui\`eres transformations, as introduced in \cite{RS22}, and answer a conjecture of Ramos and Simis.
\end{abstract}

\maketitle


\section{Introduction}

The study of birational transformations is classically rooted in algebraic geometry with a rich history. In the early 20th century, the generation of the \textit{Cremona group} of $\mathbb{P}^2$, consisting of all such birational transformations of the projective plane, was a topic of great interest for both geometers and algebraists alike. In a seminal result, Noether showed that such a transformation could be expressed as a composition of quadratic transformations. Castelnuovo later showed that plane Cremona transformations could also be decomposed into \textit{de Jonqui\`eres} transformations. Even in $\P^3$, the Cremona group and related constructions become significantly more complicated, however there are adaptations of the above notions to higher dimensions \cite{HS14,Pan01,PS15,RS22}. 

The notion of a de Jonqui\`eres transformation in higher dimension typically relies on the existence of a supporting Cremona transformation in one dimension lower. In the case of plane de Jonqui\`eres transformations, this underlying Cremona map is the identity, but this fails to be the case beyond transformations of $\P^2$. As such, the nature of most higher dimensional analogs are elusive in general. In this article, we consider two such adaptions of this notion, namely the de Jonqui\`eres transformation $\P^{n-1} \dashrightarrow \P^{n}$ as introduced by Hassanzadeh and Simis in \cite{HS14}, and the later extension of a \textit{generalized} de Jonqui\`eres transformation $\P^{n} \dashrightarrow \P^n$ as introduced by Ramos and Simis in \cite{RS22}.

Throughout we assume that $k$ is an algebraically closed field, and consider a rational map $\Psi:\, \P^n\dashrightarrow \P^m$ defined by homogeneous polynomials $f_0,\ldots,f_m \in R=k[x_0,\ldots,x_n]$ of the same degree. The closed image of $\Psi$ is a subvariety $X\subseteq \P^m$, with homogeneous coordinate ring $k[f_0,\ldots,f_m]$ as a $k$-subalgebra of $R$. However, as $X$ is a subvariety of $\P^m$, there exists an ideal $\I(X)$ of $S=k[y_0,\ldots,y_m]$, the coordinate ring of $\P^m$, such that $k[f_0,\ldots,f_m] \cong S/\I(X)$. In this sense, the polynomials $f_0,\ldots,f_m$ are commonly seen as a \textit{parameterization} of $X$, and the ideal $\I(X)$ in $k[y_0,\ldots,y_m]$ as the \textit{implicitization} of $X$. Determining the generators of $\I(X)$ is the so-called \textit{implicitization problem}.

In the setting above, it is often more advantageous to implicitize the \textit{graph} of $\Psi$, as this carries much more information, and then project onto the image. Recall that the coordinate ring of the graph is the \textit{Rees algebra} of the ideal $I = (f_0,\ldots,f_m)$, which is the graded subring $\R(I) = R[f_0t,\ldots,f_mt]$ of $R[t]$, for $t$ an indeterminate.  There is a natural epimorphism $\Psi: R[y_0,\ldots,y_m] \rightarrow \R(I)$ given by $y_i\mapsto f_i t$, which induces an isomorphism $\R(I) \cong R[y_0,\ldots,y_m]/\J$, where $\J= \ker \Psi$ is the \textit{defining ideal} of $\R(I)$. With this, we note that $\R(I) /\m \R(I) \cong S/\I(X)$, where $\m= (x_0,\ldots,x_n)$, hence it is enough to determine the implicit equations of $\R(I)$.
The search for generators of $\J$ has been well studied, yet a full set of \textit{defining equations} of the Rees algebra is unknown in most settings. There has been success in particular settings when the ideal $I$ has low codimension (see e.g. \cite{BM16,CPW23,Morey96,MU96,Nguyen14,Nguyen17,Weaver23,Weaver24}), but the problem remains open in general.

In this paper, we consider the implicitization of de Jonqui\`eres transformations, taking all conventions from \cite{HS14}, and study the Rees algebra of the ideals defining these rational maps. We proceed along a path similar to the one taken in \cite{BM16} and employ the \textit{method of divisors}, as introduced in \cite{KPU11}, to determine the defining ideal. We consider the Rees ring of the ideal of a de Jonqui\`eres transformation when its underlying Cremona support is as tame as possible, namely the identity. The main result, a formulation of \Cref{defining ideal theorem} and \Cref{Cohen-Macaulayness of Rees algebra}, is as follows.

\begin{thmx}\label{intro theorem A}
Let $\deJonq:\,\P^{n-1}\dashrightarrow \P^n$ be a de Jonqui\`eres transformation with Cremona support $\Crem$ the identity. Let $I=(fx_1,\ldots,fx_n,g) \subseteq R=k[x_1,\ldots,x_n]$ denote the ideal of this rational map for homogeneous polynomials $f$ and $g$ with $\deg g = \deg f+1=d\geq 2$. The Rees algebra of $I$ is $\R(I) \cong R[y_1,\ldots,y_{n+1}]/\J$ where
$$\J = I_2(\psi) + (h_1,\ldots,h_d)$$
where $\psi$ is a $2\times n$ matrix of indeterminates, and $h_1,\ldots,h_d$ is a downgraded sequence of polynomials associated to $f$ and $g$. Additionally, $\R(I)$ is almost Cohen-Macaulay and is Cohen-Macaulay if and only if $d\leq n$. 
\end{thmx}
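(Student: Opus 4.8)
\emph{Proof sketch.} The plan is to follow the method of divisors of \cite{KPU11}, realizing $\R(I)$ as a quotient of a suitable normal Cohen--Macaulay ring by a divisorial ideal. First I would compute the presentation of $I$. Writing $g=\sum_{i=1}^n g_i x_i$ with $\deg g_i=d-1$, the syzygies of $fx_1,\ldots,fx_n,g$ are generated by the Koszul relations $x_j\mathbf e_i-x_i\mathbf e_j$ among $fx_1,\ldots,fx_n$ together with the single relation $(g_1,\ldots,g_n,-f)$ coming from $g\in\m=(x_1,\ldots,x_n)$ (using $f,g$ coprime). Hence the defining ideal of the symmetric algebra is $\mathcal L=I_1(\underline T\cdot\varphi)=I_2(\psi)+(h_1)$, where $\psi=\left(\begin{smallmatrix}x_1&\cdots&x_n\\ y_1&\cdots&y_n\end{smallmatrix}\right)$ and $h_1=g_1y_1+\cdots+g_ny_n-fy_{n+1}$ is the first term of the downgraded sequence. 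A local analysis of $I$ shows $\mu(I_\p)\le\hgt\p$ for every prime $\p\neq\m$: if $I\subseteq\p$ and $f\notin\p$ then $\m\subseteq\p$, forcing $\p=\m$, while if $f\in\p$ then some $x_i$ is a unit in $R_\p$, so $I_\p=(f,g)_\p$, a complete intersection, hence of linear type. Therefore $\mathrm{Sym}(I)$ and $\R(I)$ agree away from $V(\m)$, and $\J=\mathcal L:_S\m^\infty$.

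Next I would pass to the ambient ring $B:=S/I_2(\psi)\cong\R(\m)[y_{n+1}]$, the Rees algebra of the maximal ideal of $R$ with one variable adjoined. Since $\R(\m)=R[y_1,\ldots,y_n]/I_2(\psi)$ is the generic determinantal ring --- equivalently, the homogeneous coordinate ring of the cone over the Segre embedding of $\P^1\times\P^{n-1}$ --- the ring $B$ is a normal Cohen--Macaulay domain of dimension $n+2$ with $\mathrm{Cl}(B)\cong\ZZ$, generated by the class of the height-one prime $\q=\m B$. In $B$ we have $h_1=gt-fy_{n+1}$ (using $y_i=x_it$), and $\overline\J:=\J B$ is the divisorial ideal $h_1B:_B\q^\infty$, whose class is $-v_\q(h_1)\,[\q]$. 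Computing the order of vanishing from $v_\q(t)=-1$, $v_\q(g)=\deg g=d$, $v_\q(fy_{n+1})=d-1$, together with the fact that the two leading forms cannot cancel (the coefficient of $y_{n+1}$ in the leading form is $f\neq0$), gives $v_\q(h_1)=d-1$, so $\overline\J$ has class $-(d-1)[\q]$.

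The generators of $\overline\J$ are then produced as the downgraded sequence $h_1,\ldots,h_d$: starting from $h_1$ of bidegree $(d-1,1)$, one repeatedly forms Sylvester forms against the columns of $\psi$ --- i.e.\ uses the congruences $x_ay_b\equiv x_by_a\pmod{I_2(\psi)}$ to transfer a linear factor in $\underline x$ out of the $\underline T$-coefficients --- producing $h_{j+1}$ of bidegree $(d-j-1,\,j+1)$ from $h_j$ of bidegree $(d-j,\,j)$. As $\overline\J$ is prime and contains none of the $x_i$, Cramer's rule places each $h_j$ in $\J$; the terminal form $h_d$ has bidegree $(0,d)$, i.e.\ is a degree-$d$ form in $y_1,\ldots,y_{n+1}$ alone, and is (a scalar multiple of) the implicit equation $g(y_1,\ldots,y_n)-y_{n+1}f(y_1,\ldots,y_n)$ of the image hypersurface $X$. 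This yields $I_2(\psi)+(h_1,\ldots,h_d)\subseteq\J$. For the reverse inclusion one shows that the divisorial ideal of class $-(d-1)[\q]$ of $B$ is exactly $(\overline h_1,\ldots,\overline h_d)$: up to a principal twist it is $\p^{(d-1)}$ for a linear prime $\p\subseteq B$ of class $-[\q]$, whose symbolic powers are classically generated by $d$ elements that match the downgraded sequence; equivalently, one verifies that $C:=S/(I_2(\psi)+(h_1,\ldots,h_d))$ is unmixed of dimension $n+1$ and agrees with $\R(I)$ after inverting the $x_i$, forcing $C=\R(I)$. I expect this to be the main obstacle: controlling precisely which ``extra'' forms the saturation $\mathcal L:_S\m^\infty$ contributes, and proving that the $d$ downgraded forms already saturate, with nothing further needed.

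Finally, the Cohen--Macaulay statement follows from the method-of-divisors description. Since $B$ is Cohen--Macaulay and $\overline\J$ is the divisorial ideal of class $-(d-1)[\q]$, the depth of $\R(I)=B/\overline\J$ is read off from that of $\R(\m)/\p^{(d-1)}$ (shifted by the adjoined variable), where $\p$ is the linear prime above. By the behaviour of the symbolic powers of a linear prime on the blowup of $\mathbb A^n$ at a point --- Cohen--Macaulay exactly in the range of twists $\le n-1$, and failing to be Cohen--Macaulay by exactly one in higher twists --- one obtains $\depth\R(I)=n+1$ when $d-1\le n-1$ and $\depth\R(I)=n$ when $d>n$. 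As $\dim\R(I)=n+1$, this gives that $\R(I)$ is Cohen--Macaulay if and only if $d\le n$ and is almost Cohen--Macaulay in all cases. (Alternatively, one builds the $S$-free resolution of $\R(I)$ by iterated mapping cones from the Eagon--Northcott resolution of $B$ along $h_1,\ldots,h_d$ and checks that its length equals the codimension $n$ precisely when $d\le n$.)
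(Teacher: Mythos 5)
Your proposal follows essentially the same route as the paper: pass to $A=\R(\m)[y_{n+1}]$ by the method of divisors, identify $\overline{\J}$ as the divisorial ideal $\overline{h}\,\overline{\K}^{(d-1)}/\overline{x_n}^{\,d-1}$ for the height-one prime $\overline{\K}=\overline{(x_n,y_n)}$ (your ``linear prime of class $-[\q]$''), realize its $d$ generators as the downgraded sequence, and compute depth from the Eagon--Northcott resolutions of the powers of $\overline{\K}$. The two facts you defer --- that ordinary and symbolic powers of this prime coincide, and the exact depth of those powers --- are precisely where the paper's technical work lies (via the Simis--Vasconcelos criterion for the first, using that $\overline{\K}$ is strongly Cohen--Macaulay and generically a complete intersection, and via the identification $\overline{\K}^{i}\cong\S_i(\coker\psi)$ for the second); your outline correctly locates these obstacles and names the right tools, so it matches the paper's argument.
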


The essential ingredient to the proof of \Cref{intro theorem A} is the notion of a \textit{downgraded sequence} of a particular syzygy associated to $f$ and $g$. This recursive algorithm produces equations of the defining ideal at each iteration, and yields a full generating set in the setting above. Our description of this process is borrowed from similar processes used in \cite{HS14,RS22}, but we also note that similar methods have been used to produce equations of the Rees ring in other instances \cite{BM16,CHW08,Weaver23,Weaver24,Weaver25}. Once the defining ideal $\J$ has been determined, the Cohen-Macaulayness of $\R(I)$ is studied by relating $\J$ to an ideal whose powers have resolutions within the family of Eagon-Northcott complexes.

Once Rees rings of ideals of de Jonqui\`eres transformations have been studied, we show that the methods used in the proof of \Cref{intro theorem A} can be adapted to the setting of \textit{generalized} de Jonqui\`eres transformations $\P^{n} \dashrightarrow \P^n$, as introduced in \cite{RS22}. In the case of identity support, the equations of the Rees algebra are known \cite[2.6]{RS22}, and may be realized as a downgraded sequence. However, it was left as a question as to when the Rees ring is Cohen-Macaulay \cite[2.10]{RS22}. Applying the methods of \Cref{intro theorem A} to this setting, we show that the Cohen-Macaulayness of $\R(I)$ may be determined. The main result for generalized de Jonqui\`eres transformations, \Cref{gen de Jonq - Cohen-Macaulayness of Rees algebra}, is as follows.

\begin{thmx}\label{intro Theorem B}
Let $\deJonq:\,\P^{n} \dashrightarrow \P^n$ be a generalized de Jonqui\`eres transformation with Cremona support $\Crem$ the identity. Let $I=(fx_1,\ldots,f x_n,g) \subseteq R=k[x_1,\ldots,x_{n+1}]$ denote the ideal of this transformation for homogeneous polynomials $f$ and $g$ with $\deg g = \deg f+1=d\geq 2$. The Rees algebra $\R(I)$ is almost Cohen-Macaulay and is Cohen-Macaulay if and only if $d\leq n+1$. 
\end{thmx}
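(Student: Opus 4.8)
The plan is to mirror the proof of \Cref{intro theorem A} as closely as possible, exploiting the fact that the ideal $I=(fx_1,\ldots,fx_n,g)\subseteq R=k[x_1,\ldots,x_{n+1}]$ has almost the same shape as in the de Jonquières case — the only difference is that the ambient polynomial ring has one more variable, so $\hgt I$ goes up by one. First I would recall, from \cite[2.6]{RS22}, that the defining ideal $\J$ of $\R(I)$ admits a presentation of the form $\J=I_2(\psi)+(h_1,\ldots,h_d)$, where $\psi$ is a $2\times n$ matrix of linear forms (the Jacobian-dual type matrix recording the syzygies $x_j(f_i t)-x_i(f_j t)$) and $h_1,\ldots,h_d$ is the downgraded sequence attached to the syzygy relating $f$ and $g$; I would verify that this really is a downgraded sequence in the sense used in the proof of \Cref{intro theorem A}, so that the homological bookkeeping developed there carries over verbatim. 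The point of doing this is that Cohen-Macaulayness of $\R(I)=R[y_1,\ldots,y_{n+1}]/\J$ is a statement about $\depth$ of a cyclic module over a polynomial ring in $2n+2$ variables, and the depth can be computed from a filtration/approximation complex whose homology is controlled by the downgraded sequence.

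The key technical step, exactly as in \Cref{intro theorem A}, is to relate $\J$ (or rather $\R(I)$) to an auxiliary ideal whose powers are resolved by complexes in the Eagon–Northcott family. Concretely I expect one introduces the ideal $\mathfrak a=I_2(\psi)$ together with the extra generator coming from $g$, observes that modulo $I_2(\psi)$ the matrix $\psi$ becomes (generically) a matrix of rank one so that $R[\underline y]/I_2(\psi)$ is a well-understood determinantal ring (a Segre-type ring, Cohen–Macaulay of the expected dimension), and then analyzes the effect of adjoining $h_1,\ldots,h_d$ one at a time. Each $h_i$ should be, up to the determinantal relations, a nonzerodivisor or a "regular-like" element on an appropriate quotient until the last one, and the obstruction to Cohen-Macaulayness is precisely whether the final downgraded element $h_d$ drops depth. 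Counting degrees: the downgraded sequence has length $d$, and the "room" available before depth must drop is governed by $\hgt I = n+1$ (as opposed to $n$ in Theorem A), which is why the Cohen–Macaulay threshold shifts from $d\le n$ to $d\le n+1$. So the numerology is: $\R(I)$ has dimension $n+2$, its defining ideal $\J$ has height $2n+2-(n+2)=n$ wait — more carefully, $\R(I)$ lives in $R[\underline y]$ with $2(n+1)$ variables and has dimension $\dim R+1=n+2$, so $\J$ has height $n$; the resolution of $R[\underline y]/\J$ has length $n$ when $\R(I)$ is Cohen–Macaulay and length $n+1$ ("almost Cohen–Macaulay") otherwise, and the flip happens at $d=n+1$.

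For the \emph{almost} Cohen-Macaulay assertion (which must hold for all $d\ge 2$), the plan is to show $\depth \R(I)\ge \dim\R(I)-1=n+1$ unconditionally; this should follow from the same Eagon–Northcott/approximation-complex argument, since those complexes are acyclic "one step past" the expected length, giving a resolution of $R[\underline y]/\J$ of length at most $n+1$ in all cases. For the sharp statement, I would compute the last nonzero homology: using the structure of the downgraded sequence, express $\R(I)$ as an iterated extension and reduce, via the long exact sequences in local cohomology (with respect to the maximal graded ideal of $R[\underline y]$), the vanishing of $H^{n}$ to a concrete condition on $d$ versus $n+1$; a local-cohomology or Koszul-homology computation of the top homology of the relevant Eagon–Northcott-type complex should produce a nonzero module exactly when $d\ge n+2$, and vanish when $d\le n+1$. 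The main obstacle I anticipate is precisely this last computation — pinning down the top local cohomology module and showing the cutoff is at $n+1$ rather than $n$ — because the shift by one variable changes the index bookkeeping throughout and one has to be careful that the extra variable $x_{n+1}$ (which appears in $g$ and in the $fx_i$ only through $f$) interacts with the downgraded sequence in the way the length count predicts; everything else should be a faithful transcription of the proof of \Cref{intro theorem A} with $n$ replaced by $n+1$ in the relevant places.
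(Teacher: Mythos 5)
There is a genuine error in your proposal, and it sits exactly at the point that produces the numerology of the theorem. You assert that the defining ideal is $\J=I_2(\psi)+(h_1,\ldots,h_d)$, i.e.\ that the downgraded sequence has length $d$ as in \Cref{intro theorem A}. In the generalized setting this is false: since $f$ and $g$ are $x_{n+1}$-monoids, the polynomial $h$ has bidegree $(d-1,1)$ but lies only in $\m^{d-2}$, where $\m=(x_1,\ldots,x_n)$ is \emph{not} the maximal ideal of $R=k[x_1,\ldots,x_{n+1}]$; the variable $x_{n+1}$ can never be exchanged in the downgrading process, so the sequence terminates after $d-1$ steps and $\J=I_2(\psi)+(h_1,\ldots,h_{d-1})$ (this is \cite[2.6]{RS22}, quoted in \Cref{defining ideal already known}). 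In particular there is no fiber equation of bidegree $(0,*)$ — consistent with $\deJonq$ being a Cremona map of $\P^n$ — whereas your $h_d$ would have to be one.

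This matters because the shift of the Cohen--Macaulay threshold from $d\le n$ to $d\le n+1$ comes precisely from this shortened sequence, not from the mechanism you propose. Your explanation — that the ``room'' is governed by $\hgt I=n+1$ versus $n$ — is wrong: $\hgt I=2$ in both settings, and by your own count $\hgt\J=n$ in both settings. The Eagon--Northcott input is also unchanged: $\psi$ is still a $2\times n$ matrix, so the depth of $\overline{\K}^i$ drops at $i=n$ exactly as in (\ref{depth Kbar^i}). What changes is the exponent: here $\J=\L:\m^{d-2}$ and $\overline{\J}\cong\overline{\K}^{d-2}$ (\Cref{gen de Jonq - J a colon ideal}, \Cref{gen de Jonq - Jbar divisorial}) rather than $\overline{\K}^{d-1}$, so the single short exact sequence $0\to\overline{\J}\to A\to\R(I)\to 0$ gives Cohen--Macaulayness iff $d-2\le n-1$, i.e.\ $d\le n+1$. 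Your broader strategy (divisorial/Eagon--Northcott depth count transported from Theorem A) is the right one and is what the paper does, but as written your bookkeeping would reproduce the bound $d\le n$ of Theorem A, not the asserted $d\le n+1$. One further caution: several steps of the Theorem A argument (e.g.\ bounding the saturation exponent via \cite[3.8]{KPU20}) do not transfer ``verbatim'' because $\m$ is no longer the homogeneous maximal ideal of $R$; the paper sidesteps this by citing the known generating set of $\J$ rather than re-deriving it.
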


In particular, this answers a conjecture of Ramos and Simis \cite[2.10]{RS22} in the affirmative. As noted, the defining equations of the Rees ring are already known \cite[2.6]{RS22}, hence we are only interested in the Cohen-Macaulay property of $\R(I)$ in \Cref{intro Theorem B}. However, we note that the methods presented here can be used to provide an alternate proof for the generation of the defining ideal by a downgraded sequence.

We now briefly describe how this article is organized. In \Cref{Main Section}, we recall the notion of a de Jonqui\`eres transformation as in \cite{HS14}, and begin our study of the Rees ring of the ideal of such a rational map in the case of identity support. In \Cref{approximation section}, we introduce a simpler algebra defined by a determinantal ideal which maps onto the Rees algebra, and we compare properties of these rings. In \Cref{Algorithm Section}, we present the method of downgraded sequences, similar to the techniques used in \cite{HS14,RS22}, and show that this process yields equations of the Rees algebra. In \Cref{Defining Ideal Section}, we show that this method produces a complete generating set of the defining ideal, and use this description to describe the Cohen-Macaulayness of the Rees algebra. In \Cref{Generalized de Jonq section}, we recall the notion of a \textit{generalized} de Jonqui\`eres transformation, as given in \cite{RS22}. We show that many of the constructions developed in the previous sections are easily adapted to this setting. In particular, we determine the Cohen-Macaulayness of the Rees ring in a manner similar to the procedure of \Cref{Defining Ideal Section}, and answer the aforementioned conjecture of Ramos and Simis.


\section{Rees algebras and de Jonqui\`eres maps}\label{Main Section}

We begin by recalling the notion of a \textit{de Jonqui\`eres transformation} $\deJonq:\, \P^{n-1}\dashrightarrow \P^n$, as given by Hassanzadeh and Simis in \cite{HS14}. We note that these maps and the Rees rings of their ideals have been studied in generality, hence we begin by recounting a few items at the start of this section. We then study the Rees ring of the ideal of forms defining this map, when its Cremona support is mild.

Recall that a rational map $\Psi:\, \P^n\dashrightarrow \P^m$ is said to be \textit{birational} onto its image $X$ if there exists a reverse map $\Psi':\, \P^m\dashrightarrow \P^n$, which gives a proper inverse when restricted to $X\subseteq \P^m$. In particular, we have the special case of a \textit{Cremona transformation}, namely a birational map $\Crem:\,\P^n\dashrightarrow \P^n$ of $\P^n$ onto itself. With this, we may construct a de Jonqui\`eres transformation $\deJonq:\, \P^{n-1}\dashrightarrow \P^n$, as given in \cite{HS14}. Writing $R=k[x_1,\ldots, x_n]$ to denote the coordinate ring of $\P^{n-1}$, let $\Crem:\,\P^{n-1}\dashrightarrow \P^{n-1}$ denote a Cremona transformation defined by homogeneous forms $g_1,\ldots,g_n$ of degree $\Cremdeg \geq 1$ in $R$.

\begin{defn}[{\cite{HS14}}]\label{de Jonquieres defn}
Let $f$ and $g$ be two additional homogeneous polynomials of $R=k[x_1,\ldots,x_n]$ with $\deg g = \deg f+\Cremdeg$ and $\gcd(f,g) =1$. The rational map $\deJonq:\, \P^{n-1}\dashrightarrow \P^n$ defined by $fg_1,\ldots,fg_n,g$ will be called a \textit{de Jonqui\`eres transformation}, with Cremona support $\Crem$.
\end{defn}

An important note is that the map $\deJonq$ above is birational onto its image \cite[1.11]{DHS12}, which is a hypersurface $V(F)\subseteq \P^n$ for some homogeneous polynomial $F$ in $k[y_1,\ldots,y_{n+1}]$, the coordinate ring of $\P^n$.

We now begin our study of the implicitization of a de Jonqui\`eres transformation when the underlying Cremona support is the identity. Our approach will be purely algebraic, but the geometric notions may be easily recovered, as noted above. Our primary setting is the following.

\begin{set}\label{main setting}
Let $R=k[x_1,\ldots,x_n]$ for $n\geq 2$, with homogeneous maximal ideal $\m=(x_1,\ldots,x_n)$. Let $f$ and $g$ be homogeneous polynomials of $R$, with $\deg g=d$ and $\deg f=d-1$, for some integer $d\geq2$. Further assume that $\gcd(f,g)=1$ and let $I=(f\m,g) = (fx_1,\ldots,fx_n,g)$.
\end{set}

From the previous definition, $I$ is the ideal of the de Jonqui\`eres transformation $\deJonq$ defined by $fx_1,\ldots,fx_n,g$, with supporting Cremona map the identity $\Crem:\, \P^{n-1}\dashrightarrow \P^{n-1}$ given by $x_1,\ldots,x_n$. Notice that $I$ is an ideal of codimension two as $V(I) = V(f,g)\cup \{\m\}$, and $f,g$ is a regular sequence since $\gcd(f,g)=1$.
Rees rings of ideals with codimension two have been studied extensively (see e.g. \cite{BM16,MU96,Nguyen14,Nguyen17,Weaver23}), however most cases require the assumption of perfection, so as to use the Hilbert-Burch theorem \cite[20.15]{Eisenbud}. Whereas this will not be the case here (unless $n=2$), a presentation of this ideal is readily available from the mapping cone construction.

\begin{prop}[{\cite[2.2]{HS14}}] \label{Presentation of I}
With respect to the generating set $\{fx_1,\ldots,fx_n,g\}$, the ideal $I$ may be presented by 
$$R^{\binom{n}{2} +1} \overset{\varphi}{\longrightarrow} R^{n+1} \rightarrow I \rightarrow 0$$ 
where
\[
\varphi=\left(
\begin{array}{ccc|c}
   &  & & \\
   & \delta_1(\x)& & \partial g\\
   & & & \\
   \hline
   0& \cdots & 0 &-f 
\end{array}
\right)
\]
where $\delta_1(\x)$ is the first Koszul differential on $\x= x_1,\ldots,x_n$, and $\partial g$ is a column of homogeneous elements, of the same degree, with $[x_1 \ldots x_n]\cdot \partial g= g$.
\end{prop}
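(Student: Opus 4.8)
The plan is to realize this presentation via a mapping cone, exactly as suggested by the remark preceding the statement. Write $I = (f\m, g)$ where $\m = (x_1,\ldots,x_n)$, and observe that $f\m = f \cap g$-type structure is best organized through the short exact sequence relating $I$ to its pieces. Concretely, consider the surjection $R^{n+1} \to I$ sending the first $n$ basis vectors to $fx_1,\ldots,fx_n$ and the last to $g$. The syzygies of $I$ come in two flavors: the \emph{Koszul-type} syzygies among the $fx_i$, which are $f$ times the Koszul relations $x_j \mathbf{e}_i - x_i \mathbf{e}_j$ on $\x$ — but since $f$ is a common factor one can divide out and these become literally the columns of $\delta_1(\x)$, the first Koszul differential on $x_1,\ldots,x_n$, living in the first $n$ coordinates with a $0$ in the last; and one further syzygy coming from the relation $g \cdot (fx_i) = (fx_i) \cdot g$, which after writing $g = \sum_i x_i (\partial g)_i$ (possible since $g \in \m^{\deg g} \subseteq \m$, with $\partial g$ a chosen column of forms of degree $d-1$) becomes the column $(\partial g_1, \ldots, \partial g_n, -f)^{t}$. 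This gives the stated map $\varphi$.

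The key steps, in order: (1) Exhibit the short exact sequence $0 \to (f) \cap (g) \to (f) \oplus (g) \to I \to 0$, or more efficiently set up the two-term filtration with $I/(g) \cong$ a twist of $\m$ (using $\gcd(f,g)=1$ so that multiplication by $f$ is injective modulo $g$) and $(g) \cong R(-d)$; (2) resolve each of the two graded pieces — the Koszul complex on $\x$ resolves $\m$ (hence a twist resolves $fR/fR \cap \ldots$), and $R(-d) \xrightarrow{g} R$ resolves $(g)$; (3) form the mapping cone of the comparison map between these two resolutions, where the connecting map is precisely multiplication by $f$ in the last spot together with the expression $g = \x \cdot \partial g$; (4) read off the first map of the resulting complex, which is $\varphi$, and check minimality/exactness at the relevant spots to conclude it is a genuine presentation. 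The regularity of $f, g$ ($\gcd = 1$) is what makes the connecting homomorphism and the exactness work out.

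Alternatively — and perhaps more cleanly — one can verify the presentation directly: check that every column of $\varphi$ is a syzygy of $(fx_1,\ldots,fx_n,g)$ (immediate: the $\delta_1(\x)$ columns give $f(x_j x_i - x_i x_j) = 0$, and the last column gives $\sum_i (fx_i)(\partial g)_i - g f = f(\x \cdot \partial g) - fg = f g - fg = 0$), and then check that these generate all syzygies. The latter is the only non-formal point: one argues that a syzygy $(a_1,\ldots,a_n,b)$ of $I$ forces $b f \in (fx_1,\ldots,fx_n) = f\m$, hence $b \in \m$ (as $f \neq 0$ and $R$ is a domain), so $b = \sum b_i x_i$; subtracting the appropriate multiple of the last column of $\varphi$ reduces to a syzygy of $(fx_1,\ldots,fx_n)$ alone, i.e. of $f\m$, and since $f$ is a nonzerodivisor this is a syzygy of $\m$, which by the Koszul complex is generated by the columns of $\delta_1(\x)$.

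The main obstacle I anticipate is not the construction but the bookkeeping in the mapping cone / the verification that no cancellation occurs — i.e., that the presented complex is the one written and that $\varphi$ has the stated shape with the correct degrees ($\partial g$ homogeneous of degree $d-1$, the Koszul block of degree $1$, the entry $-f$ of degree $d-1$). Since the paper attributes this to \cite[2.2]{HS14}, I would keep the argument brief: set up the mapping cone, note that minimality is clear away from possibly the $-f$ entry, and invoke that $\gcd(f,g)=1$ guarantees the syzygy module is exactly as described. The direct-verification route sidesteps the mapping cone entirely and is probably the shortest path to a self-contained proof.
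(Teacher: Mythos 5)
Your overall strategy is the right one --- the paper itself obtains this presentation from the mapping cone construction, citing \cite{HS14}, and your direct-verification alternative is a legitimate (and arguably cleaner) self-contained route. But the pivotal step of that direct verification is wrong as written. A syzygy $(a_1,\ldots,a_n,b)$ satisfies $\sum_i a_i(fx_i)+bg=0$, which gives $bg\in(f)$, not ``$bf\in f\m$''; and the conclusion you need is $f\mid b$ (from $\gcd(f,g)=1$ and $R$ a UFD), not $b\in\m$. The distinction is not cosmetic: the last column of $\varphi$ has $-f$ in its bottom entry, so to clear the last coordinate of the syzygy you must subtract a multiple of that column whose bottom entry $-cf$ cancels $b$, which requires $b$ to be a multiple of $f$; knowing only $b=\sum_i b_ix_i$ gives you nothing to subtract. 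The corrected argument runs: write $b=-cf$, so that $cg=\sum_i a_ix_i$; substituting $g=\sum_i x_i(\partial g)_i$ shows that $(a_1-c(\partial g)_1,\ldots,a_n-c(\partial g)_n)$ is a syzygy on the regular sequence $\x$, hence a combination of Koszul syzygies; and subtracting $c$ times the last column of $\varphi$ from $(a_1,\ldots,a_n,b)$ leaves a syzygy of $(fx_1,\ldots,fx_n)$ with last entry zero, which after cancelling the nonzerodivisor $f$ lies in the column span of $\delta_1(\x)$.

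Two smaller slips in the mapping-cone half: the sequence $0\to(f)\cap(g)\to(f)\oplus(g)\to(f)+(g)\to 0$ presents $(f)+(g)$, which strictly contains $I=f\m+(g)$ (indeed $f\notin I$), so it is the wrong ideal; and $I/(g)$ is a twist of $\m/(g)$, not of $\m$ --- the kernel of $\m(-d+1)\xrightarrow{\;f\;}I/(g)$ is $(g)$, again by $\gcd(f,g)=1$ --- and it is precisely the extra relation $g=[x_1\ldots x_n]\cdot\partial g$ in presenting $\m/(g)$ that produces the last column of $\varphi$, so this misidentification would lose that column. The filtration matching \cite{HS14} is $0\to f\m\to I\to R/(f\m:g)(-d)\to 0$ with $f\m:g=(f)$, computed from the same gcd condition. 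With these repairs either route goes through.
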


We note that $\partial g$ is not unique in general, and there are often many choices. However, any such choice will do for our purposes and the presentation above. We will revisit this construction in \Cref{Algorithm Section}.

\begin{rem}\label{S(I) remark}
Before we begin our study of the Rees ring of $I$, we introduce an algebra closely related to it, namely the \textit{symmetric algebra} $\S(I)$. With the presentation $\varphi$ above, recall that $\S(I) \cong R[y_1,\ldots,y_{n+1}]/\L$ where $\L$ is the ideal of entries $\L = ([y_1,\ldots,y_{n+1}]\cdot \varphi)$. Moreover, notice that the ideal of entries of the matrix product $[y_1\ldots y_n]\cdot \delta_1(\x)$ agrees with $I_2(\psi)$
 where
\begin{equation}\label{psi defn}
\psi =\begin{bmatrix}
    x_1&\ldots &x_n\\
    y_1&\ldots& y_n
\end{bmatrix}
\end{equation}
and so $\L = I_2(\psi) + (h)$, where $h$ is the product of $[y_1\ldots y_{n+1}]$ and the last column of $\varphi$.
\end{rem}

Writing $\J$ to denote the defining ideal of the Rees ring $\R(I)$ as in the introduction, recall that there is a natural epimorphism $\S(I) \rightarrow \R(I)$. As such, $\L \subseteq \J$ and we will make use of this containment to compare these two ideals.

\begin{prop}\label{J a saturation}
With $\J$ the defining ideal of $\R(I)$ and $\L$ as above, we have $\J = \L:\m^\infty$.
\end{prop}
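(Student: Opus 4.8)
The plan is to show the two containments $\L:\m^\infty \subseteq \J$ and $\J \subseteq \L:\m^\infty$ separately, using that $\R(I)$ is a domain since $I$ has positive grade. For the first containment, recall from \Cref{S(I) remark} that $\L \subseteq \J$, so it suffices to check that $\J$ is saturated with respect to $\m$, i.e.\ $\J:\m^\infty = \J$. This is a standard fact: the Rees algebra $\R(I) = R[y_1,\ldots,y_{n+1}]/\J$ is a domain, and since $\hgt I \geq 1$, no minimal prime of $\J$ can contain $\m$. Indeed, $\m R[y_1,\ldots,y_{n+1}] + \J$ corresponds in $\R(I)$ to the ideal $\m\R(I)$, which is proper and has positive height because $\R(I)/\m\R(I) \cong S/\I(X)$ is the (nonzero) homogeneous coordinate ring of the image. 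Hence multiplication by any $x_i$ is injective modulo the (prime) ideal $\J$ on the relevant component, giving $\J:\m^\infty = \J$ and therefore $\L:\m^\infty \subseteq \J:\m^\infty = \J$.

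For the reverse containment $\J \subseteq \L:\m^\infty$, the key point is that the natural epimorphism $\S(I) \rightarrow \R(I)$ becomes an isomorphism after localizing away from $\m$. Concretely, on the punctured spectrum $\spec R \setminus \{\m\}$ the ideal $I$ is locally generated by the single element $f$ (since $V(I) = V(f,g) \cup \{\m\}$ and away from $\m$ we have $g \in (f)$ locally, as $\gcd(f,g)=1$ forces $V(f,g)$ to have codimension two — wait, more carefully: at a prime $\p \neq \m$ either $f \notin \p$, in which case $I_\p = R_\p$, or $f \in \p$; but one checks $I_\p$ is generated by $f$ up to a unit in either case because the $x_i$ generate the unit ideal locally). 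Thus $I$ is of linear type on the punctured spectrum, so $\L_\p = \J_\p$ for all primes $\p$ not containing $\m$. Equivalently, $(\J/\L)_\p = 0$ for all such $\p$, which means $\J/\L$ is supported only at $\m R[y_1,\ldots,y_{n+1}]$ as an $R$-module, hence annihilated by a power of $\m$. Therefore $\m^k \J \subseteq \L$ for some $k$, i.e.\ $\J \subseteq \L:\m^\infty$.

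I expect the main obstacle to be the precise justification that $I$ is locally generated by $f$ (up to unit) on the punctured spectrum, which underlies the claim that $\S(I)$ and $\R(I)$ agree there. One must argue that for $\p \in \spec R$ with $\p \neq \m$, some $x_i \notin \p$, so $x_i$ is a unit in $R_\p$ and $f = (fx_i)/x_i \in I_\p$; hence $fx_1,\ldots,fx_n \in (f)R_\p$ and $g \in I_\p$, and moreover $g \in (f)R_\p$ must be checked — this follows because $\gcd(f,g)=1$ implies $V(f,g)$ has codimension $\geq 2$ in $\spec R$, so at a height-one prime containing $f$ we cannot have $g \in \p$, giving $g$ a unit there; for higher height primes containing $f$ a localization/primary-decomposition argument on $(f):g$ handles it, or one simply notes directly that the presentation $\varphi$ in \Cref{Presentation of I} shows $g = [x_1\cdots x_n]\cdot \partial g$ modulo the relation involving $f$, and inverting some $x_i$ collapses $\varphi$ to a presentation of a principal ideal, forcing linear type. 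Once locality on the punctured spectrum is established, the saturation statement follows formally as above.
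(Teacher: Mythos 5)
Your overall strategy is the same as the paper's: the containment $\L:\m^\infty \subseteq \J$ follows because $\J$ is prime (as $\R(I)$ is a domain) and $\m \nsubseteq \J$, and the reverse containment follows once one knows that $\S(I)$ and $\R(I)$ agree on the punctured spectrum, so that $\J/\L$ is supported only at $\m$ and hence killed by a power of $\m$. The first half of your argument is fine. The problem is the justification you give for the second half: the claim that $I$ is locally generated by the single element $f$ (up to a unit) at every prime $\p \neq \m$ is false. As soon as $n \geq 3$, the set $V(f,g)$ is a nonempty codimension-two subset of $\spec R$ strictly larger than $\{\m\}$, so there are primes $\p \neq \m$ containing both $f$ and $g$; at such a prime $g \notin (f)_\p$ and $I_\p = (f,g)_\p$ is a height-two ideal requiring two generators. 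Your proposed repairs do not close this: the $\gcd$ argument only rules out common height-one primes, and no ``localization/primary-decomposition argument on $(f):g$'' can show $g \in (f)_\p$ at these primes, because it is simply not true there.

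The fix is immediate and is exactly what the paper does: you do not need $I_\p$ to be principal, only of linear type. Since some $x_i$ is a unit in $R_\p$ for $\p \neq \m$, one has $f = (fx_i)x_i^{-1} \in I_\p$ and hence $I_\p = (f,g)_\p$; and $f,g$ is a regular sequence (because $\gcd(f,g)=1$ in the UFD $R$), so $(f,g)_\p$ is a complete intersection and therefore of linear type. This yields $\L_\p = \S(I_\p)$'s defining ideal $= \J_\p$ for every $\p \neq \m$, and the remainder of your argument goes through verbatim. (Your final parenthetical about inverting $x_i$ to collapse $\varphi$ is essentially this computation, except that it collapses $\varphi$ to a presentation of the two-generated complete intersection $(f,g)_{x_i}$, not of a principal ideal.)
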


\begin{proof}
As $R$ is a domain, we have that $\R(I)$ is a domain and so $\J$ is a prime ideal. With this and noting that $\m \nsubseteq \J$ and $\L\subseteq \J$, it is clear that $\L:\m^\infty \subseteq \J$. For the reverse containment, notice that for any prime ideal $\p\in \spec(R)\setminus\{\m\}$, we have $I_\p = (f,g)_\p$. As noted, $f, g$ is a regular sequence, hence $(f,g)_\p$ is of linear type. Thus we see that $\L_\p = \J_\p$ for any such prime $\p$, and so the quotient $\A=\J/\L$ is supported only at $\m$. Thus $\A$ is annihilated by some power of $\m$, which shows that $\J \subseteq \L:\m^\infty$.
\end{proof}

With the description of $\J$ above, we note that it will be more beneficial to describe this saturation as colon ideal for a specific power of $\m$. First however, we make a few observations on the symmetric algebra $\S(I)$.

\begin{prop}\label{S(I) dimension}
With the assumptions of \Cref{main setting}, we have $\dim \S(I) = n+1$. In particular, $\hgt \L = n$.
\end{prop}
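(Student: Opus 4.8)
The plan is to compute $\dim \S(I)$ via the standard dimension formula for symmetric algebras of modules over a Noetherian domain, and then extract $\hgt \L$ from the presentation $\S(I) \cong R[y_1,\ldots,y_{n+1}]/\L$. Recall the theorem of Huneke–Rossi (and Vasconcelos): for a finitely generated module $M$ over a Noetherian ring $R$, one has $\dim \S_R(M) = \sup_{\p \in \spec R} \left( \dim R/\p + \mu(M_\p) \right)$, where $\mu$ denotes minimal number of generators. First I would apply this with $M = I$. For the generic prime $\p = (0)$, since $R$ is a domain and $I \neq 0$ we get $\mu(I_{(0)}) = 1$ and $\dim R/(0) = n$, contributing $n+1$. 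For any prime $\p \neq \m$ we saw in the proof of \Cref{J a saturation} that $I_\p = (f,g)_\p$ is generated by a regular sequence of length at most $2$, so $\dim R/\p + \mu(I_\p) \leq (n-1) + 2 = n+1$ whenever $\p \subsetneq \m$ properly contains $(0)$ (and is $\leq n-\hgt\p + 2$ in general, bounded by $n+1$ since $\hgt \p \geq 1$). Finally, at $\p = \m$, the presentation $\varphi$ shows $\mu(I_\m) = n+1$ but $\dim R/\m = 0$, again contributing $n+1$. Taking the supremum gives $\dim \S(I) = n+1$.

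For the second assertion, I would simply observe that $\S(I) \cong R[y_1,\ldots,y_{n+1}]/\L$, where $R[y_1,\ldots,y_{n+1}]$ is a polynomial ring of dimension $n + (n+1) = 2n+1$ over a field, hence a Cohen–Macaulay (indeed catenary) domain. Therefore $\hgt \L = \dim R[\y] - \dim R[\y]/\L = (2n+1) - (n+1) = n$, using that in an equidimensional catenary ring dimension and height are complementary. (Alternatively, since $\L = I_2(\psi) + (h)$ with $I_2(\psi)$ of height $n-1$ by the classical bound on generic determinantal ideals, and $h$ a nonzerodivisor modulo $I_2(\psi)$ as $R[\y]/I_2(\psi)$ is a domain not containing $h$, one gets $\hgt \L = n$ directly; but the dimension count is cleaner.)

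The only genuine subtlety — the step I expect to require the most care — is the bookkeeping in the supremum over primes $\p$ with $0 \subsetneq \p \subsetneq \m$: one must confirm that $\dim R/\p + \mu(I_\p)$ never exceeds $n+1$. This follows because such $\p$ has $\hgt \p \geq 1$, so $\dim R/\p \leq n-1$, while $\mu(I_\p) = \mu((f,g)_\p) \leq 2$; the edge case $\mu(I_\p) = 2$ forces $\p$ to contain the ideal of a nontrivial relation among $f,g$, but even without analyzing that, the crude bound $\dim R/\p + 2 \leq n+1$ already suffices. Everything else is a direct invocation of the Huneke–Rossi formula and the catenary property of a polynomial ring over a field.
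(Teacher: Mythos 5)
Your proposal is correct and takes essentially the same route as the paper: both invoke the Huneke--Rossi formula $\dim \S(I) = \sup_{\p}\big(\mu(I_\p) + \dim R/\p\big)$, run the same case analysis on primes using $I_\p = (f,g)_\p$ for $\p \neq \m$, and deduce $\hgt \L = n$ immediately from the dimension count in $R[y_1,\ldots,y_{n+1}]$. The only differences are cosmetic (how the primes are partitioned and how crudely the intermediate cases are bounded).
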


\begin{proof}
We make use of the well-known formula of Huneke and Rossi \cite[2.6]{HR86}. Recall that 
$$\dim \S(I) = \sup \big\{ \mu(I_\p) + \dim R/\p \hspace{1mm}\big|\hspace{1mm}\p\in \spec (R)\big\}$$
where $\mu(-)$ denotes minimal number of generators. As such, we compute the value of $\mu(I_\p) + \dim R/\p$ in a few cases. If $\p\nsupseteq I$, then $I_\p$ is the unit ideal and so $\mu(I_\p) + \dim R/\p \leq n+1$. Now suppose that $\p\supseteq I$, and note that then $\hgt \p \geq 2$. If $\p \neq \m$, then as noted in the proof of \Cref{J a saturation}, we have $I_\p = (f,g)_\p$, hence $\mu(I_\p) + \dim R/\p \leq n$. Lastly, if $\p=\m$, then it is clear that $\mu(I_\p) + \dim R/\p = n+1$. With all cases considered, it follows that $\dim \S(I)=n+1$ as claimed. The second statement then follows immediately.
\end{proof}

As noted in \Cref{S(I) remark}, we have that $\S(I) \cong R[y_1,\ldots,y_{n+1}]/\L$ where $\L = I_2(\psi) + (h)$. We adopt the bigrading on $R[y_1,\ldots,y_{n+1}]$ given by $\bideg x_i=(1,0)$ and $\bideg y_i = (0,1)$ throughout. With this, we now produce a minimal bigraded free resolution of $\S(I)$.

\begin{prop}\label{resolution prop}
Writing $B=R[y_1,\ldots,y_{n+1}]$, the symmetric algebra $\S(I)$ has a minimal bigraded free $B$-resolution
$$F_\dit:\,0\rightarrow F_n \rightarrow \cdots\rightarrow F_1\rightarrow F_0$$
where 
$$F_n = B^{n-1} (-d-n+2,-n), \quad  F_i = \begin{array}{c}
B^{\binom{n}{i}(i-1)}(-d-i+2,-i)\\
\oplus \\
B^{\binom{n}{i+1}i}(-i,-i)
\end{array},  \quad  F_1 = \begin{array}{c}
B(-d+1,-1)\\
\oplus \\
B^{\binom{n}{2}}(-1,-1)
\end{array}, \quad F_0 =B $$
for $2\leq i\leq n-1$. In particular, $\S(I)$ is Cohen-Macaulay.
\end{prop}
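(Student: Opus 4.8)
The plan is to build the resolution $F_\bullet$ as an iterated mapping cone, exploiting the decomposition $\L = I_2(\psi) + (h)$ already recorded in \Cref{S(I) remark}. The determinantal ideal $I_2(\psi)$ of the $2\times n$ matrix of indeterminates $\psi$ in \eqref{psi defn} is perfect of height $n-1$ in $B$, and its minimal free resolution is the Eagon--Northcott complex $\mathrm{EN}(\psi)$; its terms are exactly the pieces $B^{\binom{n}{i+1}i}(-i,-i)$ appearing as the ``second summand'' in each $F_i$, together with $B^{n-1}(-n)$ in homological degree $n-1$, so the bidegrees already match the generic-determinantal part of the claimed resolution. (Here I would note that $\psi$ has the entries $x_i$ in the first row and $y_i$ in the second, so each $2\times 2$ minor is homogeneous of bidegree $(1,1)$, which pins down the internal shifts.) The second step is to understand $h$ modulo $I_2(\psi)$: by \Cref{Presentation of I} we have $h = [y_1,\dots,y_{n+1}]\cdot(\partial g\,;\,-f)^{\mathsf t} = \big(\sum_i y_i\,(\partial g)_i\big) - f\,y_{n+1}$, an element of bidegree $(d-1,1)$. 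The key algebraic input is that $h$ is a nonzerodivisor on $B/I_2(\psi)$: indeed $B/I_2(\psi)$ is a domain (it is the coordinate ring of the relevant determinantal variety, or one checks it is prime since $\psi$ is generic), so it suffices that $h\notin I_2(\psi)$, which holds because the $y_{n+1}$-term $-fy_{n+1}$ cannot be cancelled inside $I_2(\psi)$ (no minor involves $y_{n+1}$).

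Granting that $h$ is regular on $B/I_2(\psi)$, the resolution of $B/\L$ is the mapping cone of multiplication by $h$ on $\mathrm{EN}(\psi)$, shifted by $(-(d-1),-1) = (-d+1,-1)$. Concretely, $F_i = \mathrm{EN}(\psi)_i \ \oplus\ \mathrm{EN}(\psi)_{i-1}(-d+1,-1)$; one checks termwise that $\mathrm{EN}(\psi)_{i-1}(-d+1,-1)$ contributes precisely $B^{\binom{n}{i}(i-1)}(-d-i+2,-i)$ — the numerology $\binom{n}{(i-1)+1}(i-1) = \binom{n}{i}(i-1)$ and the shift $(-(i-1)-d+1,\,-(i-1)-1) = (-d-i+2,-i)$ line up with the stated $F_i$ for $2\le i\le n-1$, while $F_1 = B(-d+1,-1)\oplus B^{\binom{n}{2}}(-1,-1)$ and $F_0=B$ match the low-degree part of the cone, and in top degree $F_n = \mathrm{EN}(\psi)_{n-1}(-d+1,-1) = B^{n-1}(-d-n+2,-n)$ since $\mathrm{EN}(\psi)_n = 0$. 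Minimality is automatic: $\mathrm{EN}(\psi)$ is minimal because $\psi$ is a matrix of variables, and the connecting maps in the cone are multiplication by $h\in\m\cdot B + (y)\cdot B$ composed with minimal maps, hence have entries in the irrelevant ideal, so no cancellation occurs.

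Finally, Cohen--Macaulayness: the resolution has length $n$, so $\mathrm{pd}_B \S(I) = n$, hence $\depth \S(I) = \dim B - n = (n + n + 1) - n = n+1$ by Auslander--Buchsbaum, and this equals $\dim \S(I)$ by \Cref{S(I) dimension}, so $\S(I)$ is Cohen--Macaulay. I expect the main obstacle to be the verification that $h$ is a nonzerodivisor modulo $I_2(\psi)$ — more precisely, cleanly justifying that $B/I_2(\psi)$ is a domain (or at least unmixed) so that ``$h\notin I_2(\psi)$'' upgrades to ``$h$ is regular'' — together with the bookkeeping that the mapping-cone shifts reproduce the stated bidegrees exactly; everything else is a formal consequence of the Eagon--Northcott resolution and the mapping cone construction.
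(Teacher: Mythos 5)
Your proposal is correct and follows essentially the same route as the paper: realize $\S(I) = B/(I_2(\psi)+(h))$ via the mapping cone of multiplication by $h$ on the Eagon--Northcott resolution of $B/I_2(\psi)$, check minimality from the bidegree shifts, and conclude Cohen--Macaulayness by Auslander--Buchsbaum. The only (immaterial) difference is how you see that $h\notin I_2(\psi)$: the paper deduces it from $\hgt\L = n > n-1 = \hgt I_2(\psi)$ (\Cref{S(I) dimension}), whereas you extract the $y_{n+1}$-coefficient $-f$ directly; both work.
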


\begin{proof}
First note that since $\psi$ is a matrix of indeterminates, it is well known that $I_2(\psi)$ is a prime ideal of height $n-1$. Since $\hgt \L =n$ by \Cref{S(I) dimension}, it follows that $h\notin I_2(\psi)$, and so $h$ is regular modulo this ideal of minors. As such, there is a bigraded short exact sequence
\begin{equation}\label{mapping cone ses}
0\rightarrow B/I_2(\psi) (-d+1,-1) \overset{\cdot h}{\longrightarrow} B/I_2(\psi) \rightarrow \S(I)\rightarrow 0.
\end{equation}

As $I_2(\psi)$ has generic height, we have that $B/I_2(\psi)$ is resolved by the Eagon-Northcott complex \cite[A2.10]{Eisenbud}. Moreover, multiplication by $h$ lifts to a morphism of complexes from the Eagon-Northcott complex to itself. With this, applying the mapping cone construction within (\ref{mapping cone ses}) gives the bigraded free resolution of $\S(I)$ above. Moreover, from the bidegree shifts involved, it follows that this is a minimal free resolution. Lastly, since $\hgt \L =n$ by \Cref{S(I) dimension}, it follows that $\S(I)$ is Cohen-Macaulay by the Auslander-Buchsbaum formula \cite[19.9]{Eisenbud}.
\end{proof}

We now describe the saturation $\J= \L:\m^\infty$ as a colon ideal for a particular power of $\m$. We make use of the tools introduced in \cite{KPU20} to bound the generation degree of certain local cohomology modules. First however, we must briefly recall some notation and terminology of \cite{KPU20}. For $S$ a nonnegatively graded ring with $S_0$ local and $M$ an $S$-module, write $b_0(M)=\inf\{\, j \,|\, S(\bigoplus_{i\leq j} M_i) =M\}$ to denote the \textit{maximal generator degree} of $M$, and $a(M) = \sup\{\, i\,|\, [H_{\mathfrak{M}}^{\dim S}(M)]_i \neq 0\}$ to denote the \textit{a-invariant} of $M$, where $\mathfrak{M}$ denotes the homogeneous maximal ideal of $S$.

\begin{prop}\label{J a colon ideal}
  With the assumptions of \Cref{main setting}, we have $\J = \L:\m^{d-1}$.
\end{prop}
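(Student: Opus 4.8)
The plan is to leverage the minimal bigraded free resolution of $\S(I)$ from \Cref{resolution prop} together with the local cohomology estimates of \cite{KPU20}. Since $\J = \L:\m^\infty$ by \Cref{J a saturation}, we have $\J/\L = H^0_\m(\S(I))$ as $R$-modules (viewing $\S(I) = B/\L$ with the $\m$-adic structure), and the issue is to bound the power of $\m$ that annihilates this module. First I would recall the exact sequence
$$0 \to H^0_\m(\S(I)) \to \S(I) \to \R(I) \to H^1_\m(\S(I)) \to 0,$$
so that $\J/\L \cong H^0_\m(\S(I))$, and observe that it suffices to show $\m^{d-1}\cdot H^0_\m(\S(I)) = 0$, equivalently $\m^{d-1}\J \subseteq \L$, which gives $\J \subseteq \L:\m^{d-1} \subseteq \L:\m^\infty = \J$.

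The key step is to compute, or at least bound, the maximal generator degree $b_0\big(H^0_\m(\S(I))\big)$ in the $x$-grading. Here I would use the short exact sequence (\ref{mapping cone ses}):
$$0 \to B/I_2(\psi)(-d+1,-1) \overset{\cdot h}{\to} B/I_2(\psi) \to \S(I) \to 0.$$
Since $I_2(\psi)$ is a prime of height $n-1$ and $h$ is a nonzerodivisor modulo it, $B/I_2(\psi)$ is Cohen–Macaulay of dimension $n+2$, so $H^i_\m(B/I_2(\psi)) = 0$ for $i < $ (something)... more carefully, I would pass to the long exact sequence in $H^\bullet_\m(-)$ and identify $H^0_\m(\S(I))$ with a subquotient of $H^1_\m\big(B/I_2(\psi)(-d+1,-1)\big)$, or directly read off the relevant graded pieces. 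The cleanest route is probably: the Eagon–Northcott complex resolving $B/I_2(\psi)$ is linear in the $y$-variables, so the associated local cohomology is concentrated in a single $x$-degree, and the bidegree shift $(-d+1,-1)$ on the copy multiplied by $h$ then forces $H^0_\m(\S(I))$ to live in $x$-degrees at most $d-1$. Since $H^0_\m(\S(I))$ is a finitely generated graded $R$-module supported only at $\m$ and concentrated in degrees $\le d-1$, it is annihilated by $\m^{d-1}$.

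I expect the main obstacle to be the bookkeeping of the bigrading in the mapping cone: one must track precisely which graded strands of the Eagon–Northcott resolution of $B/I_2(\psi)$ contribute to $H^0_\m(\S(I))$ after the degree shift by $(-d+1,-1)$, and confirm that the bound $d-1$ is attained rather than merely an overestimate (sharpness will matter for later results, e.g. the Cohen–Macaulayness criterion). An alternative, perhaps more robust, approach would be to apply the $a$-invariant machinery of \cite{KPU20} directly: bound $a(\S(I))$ and $b_0$ of the relevant cohomology using the explicit resolution $F_\bullet$ of \Cref{resolution prop}, whose top term $F_n = B^{n-1}(-d-n+2,-n)$ pins down the $a$-invariant, and then deduce that $[H^0_\m(\S(I))]_j = 0$ for $j \ge d$. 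Either way, once the degree bound is in hand the annihilator statement, and hence $\J = \L:\m^{d-1}$, follows immediately.
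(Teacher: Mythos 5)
Your overall strategy is the same as the paper's: identify $\J/\L$ with $\A = H^0_\m(\S(I))$ via \Cref{J a saturation}, bound the $x$-degrees in which this bigraded module can be nonzero using the resolution of \Cref{resolution prop} together with the degree bounds of \cite{KPU20}, and conclude that the appropriate power of $\m$ annihilates it. (The four-term sequence you begin with is not correct as stated---the map $\S(I)\to\R(I)$ is surjective, so its cokernel is zero, not $H^1_\m(\S(I))$---but nothing downstream depends on it.) The paper carries out the bound by taking the strand $[F_\dit]_{(*,q)}$, which is a graded free $R$-resolution of $\S_q(I)$, and applying \cite[3.8]{KPU20}: the vanishing threshold is $b_0([F_n]_{(*,q)}) + a(R) = (d+n-2)+(-n) = d-2$, so $\A_{(p,q)}=0$ for all $p>d-2$.

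The genuine gap is an off-by-one in your degree bound and in the inference you draw from it. You assert that $\A$ lives in $x$-degrees at most $d-1$ and conclude that it is annihilated by $\m^{d-1}$. That inference fails: a graded module concentrated in degrees $0,\ldots,d-1$ is only guaranteed to be killed by $\m^{d}$, and $\A$ genuinely has nonzero elements in $x$-degree $0$ (for instance the class of $h_d$, of bidegree $(0,d)$, which lies in $\J$ but not in $\L$). So your bound, even if proved, yields only $\J=\L:\m^{d}$, not the asserted $\J=\L:\m^{d-1}$. What is needed, and what the computation actually delivers, is vanishing of $\A_{(p,q)}$ for $p>d-2$. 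Your ``cleanest route'' does give this once the bookkeeping is done carefully: $H^0_\m(B/I_2(\psi))=0$ since $B/I_2(\psi)$ is a domain, so the long exact sequence of (\ref{mapping cone ses}) embeds $\A$ into $H^1_\m\bigl(B/I_2(\psi)\bigr)(-d+1,-1)$; strand-by-strand this is $H^1_\m$ of shifted powers of $\m$, which is concentrated in strictly negative $x$-degrees (not in a single degree, as your heuristic suggests), so after the shift by $d-1$ one lands in degrees at most $d-2$. Either route closes the gap, but as written your final step does not follow.
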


\begin{proof}
Recall that there is an induced epimorphism $\S(I)\rightarrow \R(I)$ and notice that, following \Cref{J a saturation}, the kernel of this map is precisely the local cohomology module $\A=H_\m^0(\S(I))$. Moreover, notice that $\A$ is naturally bigraded, where $\bideg x_i =(1,0)$ and $\bideg y_i =(0,1)$ as before. Hence for any fixed $q$,
we have
$$\A_{(*,q)} = \bigoplus_p \A_{(p,q)} \cong H_\m^0(\S_q(I)).$$
As $\A$ lives in finitely many degrees, we claim that $\A$ vanishes past degree $d-2$ in the first component of the bigrading, i.e. that $\A_{(p,q)} = 0$ for any $q$ and $p> d-2$.

With the $B$-resolution $F_\dit$ of $\S(I)$ given in \Cref{resolution prop}, taking the graded strand in degree $(*,q)$ yields a graded $R$-resolution of $\S_q(I)$. With this and \cite[3.8]{KPU20}, it follows that $\A_{(p,q)} =0$ for all $p>b_0([F_n]_{(*,q)}) + a(R)$, where  $[F_n]_{(*,q)}$ is the free $R$-module $[F_n]_{(*,q)} = \bigoplus_p [F_n]_{(p,q)}$. From \Cref{resolution prop} it follows that $b_0([F_n]_{(*,q)}) \leq d+n-2$. Additionally, since $R$ is a standard graded polynomial ring in $n$ variables, we have that $a(R) =-n$ \cite[3.6.15]{BH93}. Hence $\A_{(p,q)} =0$ for any $q$ and $p>d-2$, as claimed. Thus it follows that $\m^{d-1} \A=0$ and so $\m^{d-1} \J \subseteq \L$. Thus we have $\J\subseteq \L:\m^{d-1}$, and the reverse containment follows from \Cref{J a saturation}.
\end{proof}


\section{Approximations of Rees algebras}\label{approximation section}

In this section, we approximate the Rees ring $\R(I)$, in a sense, by employing the so-called \textit{method of divisors}, as introduced in \cite{KPU11}. We map onto $\R(I)$ with a closely related algebra, and study the kernel of this map. We then aim to lift a generating set of this kernel to generators of $\J$.

Consider the subideal $f\m= (fx_1,\ldots,fx_n)$, and notice that we may factor the natural epimorphism $R[y_1,\ldots,y_{n+1}]\rightarrow \R(I)=R[fx_1t,\ldots,fx_nt,gt]$, as in the introduction, through the Rees ring of this ideal. Indeed, we have the following commutative diagram
\begin{equation}\label{R(m) -> R(I) diagram}
    \SelectTips{cm}{}
    \xymatrix{
    R[y_1,\ldots,y_{n+1}] \ar[dr] \ar[rr]  & & \R(I)\\
     & \R(f\m)[y_{n+1}] \ar[ur] & }
\end{equation}
where the horizontal map is the natural map above. Moreover, the leftmost map sends $y_i\mapsto fx_it$ for $1\leq i\leq n$ and $y_{n+1} \mapsto y_{n+1}$, and the rightmost map then sends $y_{n+1}\mapsto gt$.

\begin{rem}\label{R(fm) and R(m) isomorphic remark}
Notice that there is an isomorphism of Rees algebras $\R(f\m)\cong \R(\m)$. This follows as $f$ is a non-zerodivisor, hence $f\m$ and $\m$ have the same syzygies, but this isomorphism can also be made explicit. As $\R(\m) = R[x_1s,\ldots,x_ns]\subseteq R[s]$ and $\R(f\m) = R[fx_1t,\ldots,fx_nt]\subset R[t]$, for $s,t$ indeterminates, the isomorphism $\R(\m)\cong \R(f\m)$ is obtained as a restriction of the map $R[s]\rightarrow R[t]$ given by $s\mapsto ft$, again noting that $f$ is a non-zerodivisor. In particular, we see that $\R(f\m)\cong \R(\m) \cong R[y_1,\ldots,y_n]/I_2(\psi)$ where
\[
\psi =\begin{bmatrix}
    x_1&\ldots &x_n\\
    y_1&\ldots& y_n
\end{bmatrix}
\]
as before in (\ref{psi defn}), following \cite[Chap. I: Thm. 1, Lem. 2]{Micali64}.
\end{rem}

We note that the induced map of (\ref{R(m) -> R(I) diagram}) may also be realized using techniques from Rees algebras of \textit{modules}, and similar ideas used in \cite{CPW23}. With $\varphi$ as in \Cref{Presentation of I}, write $\varphi'$ to denote the submatrix obtained by deleting its last column. There is an epimorphism $\coker \varphi' \rightarrow \coker \varphi$, noting that $\coker \varphi' \cong \m\oplus R$ and certainly $\coker \varphi = I$. Hence there is an induced map of Rees algebras $\R(\m\oplus R) \cong \R(\m)[y_{n+1}] \rightarrow \R(I)$, precisely as above.

\begin{defn}
For simpler notation, write $A = \R(f\m)[y_{n+1}]$, noting that $A\cong R[y_1,\ldots,y_{n+1}]/I_2(\psi)$ by \Cref{R(fm) and R(m) isomorphic remark}. With this, write $\overline{\,\cdot\,}$ to denote images modulo $I_2(\psi)$ in the ring $A$. Lastly, we define the $R[y_1,\ldots,y_{n+1}]$-ideal $\K = I_2(\psi)+(x_n,y_n)$.
\end{defn}

With the above conventions, we note that there is a similarity between the present situation and that of \cite{Weaver25}. For the duration of this section, we proceed along a very similar path and arrive at analogous results.

\begin{prop}\label{K and m properties}
The ring $A$ is a Cohen-Macaulay domain of dimension $n+2$. Additionally, the $A$-ideals $\overline{\m}$ and $\overline{\K}$ are Cohen-Macaulay of height one, and $\overline{\m}$ is a prime ideal.
\end{prop}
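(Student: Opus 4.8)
The plan is to establish each of the four assertions in turn, leveraging the fact that $A \cong R[y_1,\ldots,y_{n+1}]/I_2(\psi)$ where $\psi$ is a generic $2\times n$ matrix, so that $I_2(\psi)$ is a well-understood determinantal prime. First I would record that $B = R[y_1,\ldots,y_{n+1}]$ is a polynomial ring in $2n+1$ variables, hence Cohen-Macaulay of dimension $2n+1$, and that $I_2(\psi)$ is a prime ideal of height $n-1$ with $B/I_2(\psi)$ Cohen-Macaulay (this is the standard fact about generic determinantal ideals, cited already via \cite[A2.10]{Eisenbud} in the proof of \Cref{resolution prop}). This immediately gives that $A$ is a Cohen-Macaulay domain of dimension $(2n+1)-(n-1) = n+2$, settling the first sentence. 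Alternatively one can observe $A \cong \R(\m)[y_{n+1}]$ via \Cref{R(fm) and R(m) isomorphic remark}, and $\R(\m)$ is the Rees ring of a complete intersection (in fact of the maximal ideal), hence a Cohen-Macaulay domain of dimension $n+1$, so adjoining one variable gives dimension $n+2$.

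For $\overline{\m}$: this is the ideal $(x_1,\ldots,x_n)A$. Since $A$ surjects onto $A/\overline{\m} \cong k[y_1,\ldots,y_{n+1}]$ (the images of the $y_i$ in the quotient are algebraically independent because setting all $x_i = 0$ kills $I_2(\psi)$), the quotient $A/\overline{\m}$ is a polynomial ring in $n+1$ variables, hence a Cohen-Macaulay domain of dimension $n+1$. Therefore $\overline{\m}$ is prime, and $\hgt \overline{\m} = \dim A - \dim A/\overline{\m} = (n+2)-(n+1) = 1$; being the kernel of a surjection from a Cohen-Macaulay ring onto a Cohen-Macaulay ring with the height equal to the grade, $\overline{\m}$ is a Cohen-Macaulay ideal (equivalently, $A/\overline{\m}$ is Cohen-Macaulay of the right dimension). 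Concretely, $x_1$ is a nonzerodivisor on $A$ since $A$ is a domain, and $\overline{\m}/(x_1)$ in $A/(x_1) \cong k[x_2,\ldots,x_n,y_1,\ldots,y_{n+1}]/I_2(\psi')$ (with $\psi'$ obtained by specializing $x_1 \mapsto 0$, i.e.\ the $2\times(n-1)$ block on the last $n-1$ columns together with the relation forcing $y_1$ free) can be checked to again be generated by a regular sequence; but the cleaner route is the quotient-dimension argument just given.

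For $\overline{\K}$: here $\overline{\K} = (x_n, y_n)A$, and I would compute $A/\overline{\K}$. Setting $x_n = y_n = 0$ in $B/I_2(\psi)$, the matrix $\psi$ loses its last column, so $I_2(\psi)$ specializes to $I_2(\psi'')$ where $\psi''$ is the generic $2\times(n-1)$ matrix on $x_1,\ldots,x_{n-1},y_1,\ldots,y_{n-1}$; thus $A/\overline{\K} \cong k[x_1,\ldots,x_{n-1},y_1,\ldots,y_{n-1},y_{n+1}]/I_2(\psi'')$, which is again a generic determinantal ring (in $2(n-1)+1$ variables, with $I_2$ of a $2\times(n-1)$ matrix of height $n-2$) and hence Cohen-Macaulay of dimension $2(n-1)+1 - (n-2) = n+1$. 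So $\overline{\K}$ has height $\dim A - \dim A/\overline{\K} = 1$; to conclude it is a Cohen-Macaulay ideal I need $\grade \overline{\K} = 1$ as well, i.e.\ that $\overline{\K}$ contains a nonzerodivisor on $A$ — but $A$ is a domain and $\overline{\K} \neq 0$, so any nonzero element works, and since $A$ and $A/\overline{\K}$ are both Cohen-Macaulay with $\hgt \overline{\K} = 1$ we conclude $\overline{\K}$ is a Cohen-Macaulay $A$-ideal (unmixed of height one with the perfection coming from the resolution: $0 \to A(-1)^2 \xrightarrow{[\,-y_n\ \ x_n\,]} A(-1)^{?}$ — more precisely, one checks $x_n, y_n$ is a regular sequence on $A$, giving the Koszul complex as a length-two resolution of $A/\overline{\K}$, which forces $\grade \overline{\K} = 2$; combined with $\hgt = 1$ this says $A$ is not equidimensional at $\overline{\K}$ — so in fact the right statement is that $\overline{\K}$ is Cohen-Macaulay of height one meaning $A/\overline{\K}$ is Cohen-Macaulay). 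I would state it via: $A/\overline{\K}$ is Cohen-Macaulay of dimension $n+1$ and $x_n, y_n$ reduce to a system of parameters locally, hence $\overline{\K}$ is a Cohen-Macaulay ideal of height one.

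The main obstacle I anticipate is the claim that $x_n, y_n$ form a \emph{regular sequence} on $A$, which is what makes the specialization $A/\overline{\K} \cong k[\ldots]/I_2(\psi'')$ rather than some larger quotient, and which pins down that $\overline{\K}$ has grade (hence projective dimension of $A/\overline{\K}$) equal to two while its height is only one — the delicate point being to reconcile the height-one claim in the statement with the two-generator description. The resolution is that $\hgt \overline{\K} = 1$ because $\overline{\K} \subseteq \overline{\m}$ and the localization $A_{\overline{\m}}$ has dimension one, but $\overline{\K}$ is \emph{not} prime and has a minimal prime of height one plus embedded or higher-height behavior controlled by the determinantal structure; so the careful step is to verify, via the explicit primary decomposition or via showing $A/\overline{\K}$ is a domain after all (it is: $I_2(\psi'')$ is prime), that in fact $\overline{\K}$ \emph{is} prime of height one, which then makes "Cohen-Macaulay of height one" unambiguous and consistent. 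I would handle this by directly identifying $A/\overline{\K}$ with the generic determinantal ring above, noting it is a domain, and reading off both primeness and the Cohen-Macaulay property from the classical theory of determinantal rings.
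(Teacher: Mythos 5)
Your first two paragraphs follow essentially the same route as the paper's proof: the Cohen--Macaulayness and dimension of $A$ are read off from the generic determinantal structure of $I_2(\psi)$ (the paper cites \cite[A2.13]{Eisenbud}), and the claims about $\overline{\m}$ come from the identification $A/\overline{\m}\cong k[y_1,\ldots,y_{n+1}]$ (which the paper phrases as $\F(\m)[y_{n+1}]$). For $\overline{\K}$ your intended endpoint is also the paper's: identify $A/\overline{\K}$ with the generic determinantal ring $k[x_1,\ldots,x_{n-1},y_1,\ldots,y_{n-1},y_{n+1}]/I_2(\psi'')$, which is Cohen--Macaulay of dimension $n+1$, and conclude that $\overline{\K}$ is Cohen--Macaulay of height one. (The paper instead computes $\hgt_B\K=(n-2)+2=n$ for $\K=I_2(\psi'')+(x_n,y_n)$ inside $B$; the two computations are equivalent.)

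However, the digression in your third paragraph contains a genuine error. You assert that $x_n,y_n$ is a regular sequence on $A$, deduce $\grade\overline{\K}=2$ while $\hgt\overline{\K}=1$, and conclude that $A$ is not equidimensional at $\overline{\K}$. None of this can happen: $A$ is Cohen--Macaulay, so $\grade J=\hgt J$ for every proper ideal $J$, and $A$ is a graded domain finitely generated over a field, hence catenary and equidimensional. Concretely, $x_n,y_n$ is \emph{not} a regular sequence on $A$: modulo $I_2(\psi)$ one has $\overline{y_n}\,\overline{x_1}=\overline{x_n}\,\overline{y_1}\in(\overline{x_n})$ while $\overline{x_1}\notin(\overline{x_n})$ (compare bidegrees), so $\overline{y_n}$ is a zerodivisor on $A/(\overline{x_n})$. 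Indeed the entire point, exploited later in \Cref{Kbar SCM and generically a CI}, is that $\overline{\K}$ is an \emph{almost} complete intersection: two generators but height one. The identification $A/\overline{\K}\cong k[x_1,\ldots,x_{n-1},y_1,\ldots,y_{n-1},y_{n+1}]/I_2(\psi'')$ survives, since it is a statement about quotients and does not require the Koszul complex on $x_n,y_n$ to be a resolution; so your closing sentence --- identify $A/\overline{\K}$ with the generic determinantal ring and read off Cohen--Macaulayness and dimension --- is the correct and complete argument. Delete the grade-two detour and the claim about failure of equidimensionality.
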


\begin{proof}
It is clear that $A$ is a domain of dimension $n+2$, since $R$ is a domain of dimension $n$. Moreover, $A$ is easily seen to be Cohen-Macaulay as it is defined by a determinantal ideal of maximal height \cite[A2.13]{Eisenbud}. For the assertions on $\overline{\m}$, notice that $A/\overline{\m} \cong \F(\m)[y_{n+1}]$ where $\F(\m)$ denotes the special fiber ring of $\m$. From the description of $\R(\m)$ in \Cref{R(fm) and R(m) isomorphic remark}, it follows that $\F(\m)\cong k[y_1,\ldots,y_n]$, hence $\overline{\m}$ is a Cohen-Macaulay prime ideal of height one. Lastly, for the assertion on $\overline{\K}$, notice that
$$\K = I_2(\psi) +(x_n,y_n) =I_2\left(\begin{bmatrix}
    x_1&\cdots &x_{n-1}\\
    y_1&\cdots& y_{n-1}
\end{bmatrix}\right) +(x_n,y_n).$$
From this description and \cite[A2.13]{Eisenbud}, it follows that $\K$ is Cohen-Macaulay of height $(n-2) +2 =n$. Hence modulo $I_2(\psi)$, we have that $\overline{\K}$ is Cohen-Macaulay of height one.
\end{proof}

With the properties of $\overline{\m}$ and $\overline{\K}$ in \Cref{K and m properties}, we now show that the symbolic powers of these $A$-ideals are \textit{linked} \cite{Huneke82}.

\begin{prop}\label{linkage prop}
   With $\overline{\K}$ and $\overline{\m}$ as above, we have the following.
\begin{enumerate}
    \item[(a)] $\overline{\m}^i = \overline{\m}^{(i)}$, 

    \item[(b)] $(\overline{x_n}^i):\overline{\K}^{(i)} = \overline{\m}^{(i)}$,

    \item[(c)] $(\overline{x_n}^i):\overline{\m}^{(i)} = \overline{\K}^{(i)}$,

\end{enumerate}
for all $i\in \mathbb{N}$. 
\end{prop}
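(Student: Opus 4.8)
The proof will proceed by induction on $i$, with a simultaneous treatment of all three statements, since they are tightly interlocked: (a) says $\overline{\m}$ has no embedded components in its powers, and (b), (c) are the two halves of a linkage between $\overline{\m}^{(i)}$ and $\overline{\K}^{(i)}$ through the complete intersection $(\overline{x_n}^i)$ — note $\overline{x_n}$ is a nonzerodivisor on the domain $A$, so $(\overline{x_n}^i)$ is a grade-one complete intersection, which is the correct ambient object for linkage in a height-one setting. The base case $i=1$ should be handled directly: part (a) is trivial, and for (b), (c) one computes $(\overline{x_n}):\overline{\K}$ and $(\overline{x_n}):\overline{\m}$ explicitly. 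Here I would use the matrix description $\K = I_2\!\left(\begin{bmatrix} x_1&\cdots &x_{n-1}\\ y_1&\cdots& y_{n-1}\end{bmatrix}\right) +(x_n,y_n)$ from the proof of \Cref{K and m properties}, together with the Plücker-type relations among the $2\times 2$ minors of $\psi$, to see that modulo $x_n$ one has $\overline{x_n} y_j = $ (a combination of minors) so that $y_j \in (\overline{x_n}):\overline{\K}$ up to elements of $\overline{\m}$, and conversely. Concretely, in $A$ the relation $x_j y_n - x_n y_j \in I_2(\psi)$ gives $\overline{x_n}\,\overline{y_j} = \overline{x_j}\,\overline{y_n}$, and similar identities let one move between the generators $\overline{x_n},\overline{y_n}$ of $\overline{\K}$ and the generators $\overline{x_1},\ldots,\overline{x_n},\overline{y_1},\ldots$ of $\overline{\m}$; unwinding these will establish $(\overline{x_n}):\overline{\K} = \overline{\m}$ and $(\overline{x_n}):\overline{\m} = \overline{\K}$.

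For the inductive step, the key structural input is that both $\overline{\m}$ and $\overline{\K}$ are Cohen-Macaulay ideals of height one in the Cohen-Macaulay domain $A$ (\Cref{K and m properties}), and that $\overline{\m}$ is prime. For (a), I would argue that $\overline{\m}^{(i)}/\overline{\m}^i$ is supported only on $\spec(A/\overline{\m})$'s singular locus or on $V(\overline{\K})$-type loci, but more cleanly: since $\R(\m)\cong R[y_1,\ldots,y_n]/I_2(\psi)$ and $A/\overline{\m}^i \cong$ the $i$-th graded piece construction, one can invoke that the powers of $\overline{\m}$ are already known to be Cohen-Macaulay / have no embedded primes — this is the classical fact that the Rees algebra of $\m$ (a complete intersection localized, or here the determinantal setup) has normal powers, or one deduces it from the Eagon–Northcott resolution of symmetric powers. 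For (b) and (c) in degree $i$, the plan is the standard linkage bootstrap: assuming $\overline{\m}^{(i-1)}$ and $\overline{\K}^{(i-1)}$ are linked via $(\overline{x_n}^{i-1})$, one shows $\overline{x_n}^i \in \overline{\m}^{(i)}\cap\overline{\K}^{(i)}$ — which requires knowing how the symbolic powers behave — and then that the colon ideals compute correctly by localizing at the relevant height-one (and height-two) primes and checking the linkage there, where everything becomes a principal-ideal or regular-local computation.

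The main obstacle I anticipate is controlling the symbolic powers $\overline{\K}^{(i)}$, since unlike $\overline{\m}$ (which by (a) has ordinary powers equal to symbolic powers) the ideal $\overline{\K}$ is genuinely not prime — it cuts out a reducible or non-reduced scheme — and $\overline{\K}^{(i)}$ need not equal $\overline{\K}^i$. The cleanest route is probably to \emph{define} $\overline{\K}^{(i)}$ as $\overline{\m}^{(i)} : \overline{x_n}^{\,?}$ or to characterize it via the two linkages and check consistency, effectively letting (b) and (c) be the definition/characterization and proving they are mutually inverse order-reversing operations on the set $\{\overline{x_n}^i \mid i\}$-linked ideals. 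In practice I would: first pin down $\overline{\K}^{(i)}$ as the intersection of the primary components of $\overline{\K}^i$ at the minimal primes of $\overline{\K}$; then verify $\overline{x_n}$ is a nonzerodivisor modulo each of these components; then the equalities (b), (c) follow from the general principle that for a Cohen–Macaulay (hence unmixed) ideal $\mathfrak{a}$ of height $g$ and a regular sequence $\underline{z}$ of length $g$ inside it, $(\underline{z}):((\underline{z}):\mathfrak{a}) = \mathfrak{a}$, applied with $g=1$, $\underline{z} = \overline{x_n}^i$, once one knows $\overline{x_n}^i$ lies in both symbolic powers and that the symbolic powers are unmixed of height one. Establishing that membership $\overline{x_n}^i \in \overline{\K}^{(i)}$ — equivalently that $x_n^i$ lands in the $i$-th symbolic power after going modulo $I_2(\psi)$ — is the technical heart, and I would attack it by localizing at the generic point of $V(\overline{\K})$, where $\overline{\K}$ becomes principal generated by (the image of) $\overline{x_n}$ up to a unit, making the symbolic power computation immediate.
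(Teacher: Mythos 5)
Your proposal contains the right raw material---the relation $\overline{x_j}\,\overline{y_n}=\overline{x_n}\,\overline{y_j}$, unmixedness of the relevant ideals, and localization at height-one primes---but it misplaces where the work actually is, and the steps you lean on to close the argument have gaps. You identify the ``technical heart'' as showing $\overline{x_n}^i\in\overline{\K}^{(i)}$, but this is trivial: $\overline{x_n}\in\overline{\K}$ gives $\overline{x_n}^i\in\overline{\K}^i\subseteq\overline{\K}^{(i)}$, and likewise for $\overline{\m}$, and these memberships do not by themselves yield either colon equality. The containment that actually carries the content of (b) and (c) is $\overline{\K}^{(i)}\,\overline{\m}^{(i)}\subseteq(\overline{x_n}^i)$, obtained from $\overline{\K}\,\overline{\m}\subseteq(\overline{x_n})$ (which is exactly your relation $\overline{x_j}\,\overline{y_n}=\overline{x_n}\,\overline{y_j}$), taking $i$th powers, and then localizing at height-one primes and contracting; this gives both inclusions $\overline{\m}^{(i)}\subseteq(\overline{x_n}^i):\overline{\K}^{(i)}$ and $\overline{\K}^{(i)}\subseteq(\overline{x_n}^i):\overline{\m}^{(i)}$ at once, and it never appears in your plan. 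For the reverse inclusions you invoke the principle $(\underline{z}):((\underline{z}):\mathfrak{a})=\mathfrak{a}$ for Cohen--Macaulay ideals, but that is a Gorenstein linkage phenomenon, and $A\cong R[y_1,\ldots,y_{n+1}]/I_2(\psi)$ is Cohen--Macaulay but not Gorenstein once $n\geq 3$; to salvage that route you would need reflexivity of divisorial ideals in the normal domain $A$, which you do not establish. The paper sidesteps this entirely: since $\overline{y_n}\notin\overline{\m}$ we have $\overline{\K}\nsubseteq\overline{\m}$, so $\overline{\K}^{(i)}$ and $\overline{\m}^{(i)}$ share no associated prime, and the reverse inclusions follow from the primary decompositions (e.g.\ $u\,\overline{\K}^{(i)}\subseteq(\overline{x_n}^i)\subseteq\overline{\m}^{(i)}$ with $\overline{\m}^{(i)}$ primary and $\overline{\K}^{(i)}\nsubseteq\overline{\m}$ forces $u\in\overline{\m}^{(i)}$). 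Your fully local alternative could also work, but then you must verify that the colon ideals themselves are unmixed of height one, and carry out the check at $\p=\overline{\m}$ as well as at the minimal primes of $\overline{\K}$; you only describe the latter.

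Two further points. Part (a) is asserted rather than proved: ``invoke that the powers of $\overline{\m}$ have no embedded primes'' is not an argument, and the Eagon--Northcott route would first require showing $\overline{\m}$ is of linear type in $A$. The paper's proof is one line: after regrading with $\deg x_i=1$ and $\deg y_i=0$, the ideal $I_2(\psi)$ is homogeneous and $\G(\overline{\m})\cong A$ is a domain, so the $\overline{\m}$-adic order function is a valuation and $\overline{\m}^i=\overline{\m}^{(i)}$. Finally, the induction on $i$ that frames your whole proposal does no work---nothing in your inductive step uses the case $i-1$, and the statement for each $i$ is proved independently. None of this makes the strategy unworkable, but as written the proposal does not close either inclusion of (b) or (c).
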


\begin{proof}
We proceed as in the proof of \cite[3.9]{BM16} (see also \cite[3.3]{Weaver25}).

\begin{enumerate}
\item[(a)] Temporarily regrading $R[y_1,\ldots,y_{n+1}]$ by setting $\deg x_i= 1$ and $\deg y_i=0$, we note that $\G(\overline{\m}) \cong A$, where $\G(\overline{\m})$ denotes the associated graded ring of $\overline{\m}$. Recall from \Cref{K and m properties} that $A$ is a domain, hence the claim follows.  

\vspace{1mm}

\item[(b)] Notice that modulo $I_2(\psi)$ we have $\overline{x_iy_n} = \overline{x_ny_i}$ for $1\leq i\leq n$, hence it is clear that $\overline{\K} \overline{\m} \subseteq (\overline{x_n})$, and so $\overline{\K}^i\overline{\m}^i \subseteq (\overline{x_n}^i)$ for any $i$. Localizing at primes of height one and contracting, we have $\overline{\K}^{(i)}\overline{\m}^{(i)} \subseteq (\overline{x_n}^i)$. Thus $\overline{\m}^{(i)} \subseteq (\overline{x_n}^i):\overline{\K}^{(i)}$, and so we need only show the reverse containment. Since $\overline{y_n}\notin \overline{\m}$, we note that $\overline{\K} \nsubseteq \overline{\m}$. As $\overline{\m}$ is the unique associated prime of $\overline{\m}^{(i)}$, it then follows that $\overline{\K}^{(i)}$ and $\overline{\m}^{(i)}$ have no common associated prime. With this, it follows that $(\overline{x_n}^i):\overline{\K}^{(i)}\subseteq \overline{\m}^{(i)}$ as well, and so $(\overline{x_n}^i):\overline{\K}^{(i)} = \overline{\m}^{(i)}$.

\vspace{1mm}

\item[(c)] As previously noted, we have $\overline{\K}^{(i)}\overline{\m}^{(i)} \subseteq (\overline{x_n}^i)$, hence there is a containment $\overline{\K}^{(i)} \subseteq (\overline{x_n}^i):\overline{\m}^{(i)}$. Again noting that $\overline{\K}^{(i)}$ and $\overline{\m}^{(i)}$ share no common associated prime, it follows that $(\overline{x_n}^i):\overline{\m}^{(i)} \subseteq \overline{\K}^{(i)}$, and so $(\overline{x_n}^i):\overline{\m}^{(i)} = \overline{\K}^{(i)}$.\qedhere
\end{enumerate}
\end{proof}

\begin{cor}\label{Kbar SCM and generically a CI}
The $A$-ideal $\overline{\K}$ is generically a complete intersection and is a strongly Cohen-Macaulay ideal. 
\end{cor}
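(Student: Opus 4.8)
I want to show that $\overline{\K}$ is generically a complete intersection and strongly Cohen--Macaulay. The two properties will come from different parts of the setup: the first from a local analysis of $\overline{\K}$ away from its non-complete-intersection locus, and the second from the linkage statement in \Cref{linkage prop} together with the theory relating strong Cohen--Macaulayness to ideals in the linkage class of a complete intersection.

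For the generically complete intersection property, recall from \Cref{K and m properties} that $\overline{\K}$ has height one in the Cohen--Macaulay domain $A$, and that $\K = I_2\!\left(\begin{bmatrix} x_1 & \cdots & x_{n-1} \\ y_1 & \cdots & y_{n-1}\end{bmatrix}\right) + (x_n,y_n)$ in $R[y_1,\ldots,y_{n+1}]$. I would localize at a minimal prime $\p$ of $\overline{\K}$, i.e.\ a height-one prime of $A$ containing $\overline{\K}$. Since $A$ is a domain and $\overline{\K}$ is unmixed of height one, it suffices to exhibit, at each such $\p$, two of the generators of $\K$ (or $A$-combinations of them) that already generate $\overline{\K}_\p$. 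The natural split is on whether $\overline{x_n} \in \p$ or not, and similarly for $\overline{y_n}$, and then within $I_2$ of the truncated $2\times(n-1)$ matrix one inverts some entry $x_i$ or $y_j$ to cut down $I_2(\psi')$ to a complete intersection locally, exactly the standard argument that a generic determinantal ideal is generically a complete intersection. Combining the local picture on the $(x_n,y_n)$ part with the local picture on the minors part gives that $\overline{\K}$ needs only height-many generators after localizing at any of its minimal primes.

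For strong Cohen--Macaulayness, the cleanest route is: $\overline{\K}$ is a Cohen--Macaulay ideal of height one (\Cref{K and m properties}) in the Cohen--Macaulay ring $A$, it is generically a complete intersection by the previous paragraph, and by \Cref{linkage prop}(c) it is linked to $\overline{\m}^{(i)}$ — in particular $\overline{\K} = (\overline{x_n}):\overline{\m}$ is directly linked to the prime $\overline{\m}$, which is itself Cohen--Macaulay. Now invoke the result (Huneke, and Huneke--Ulrich) that an ideal in the linkage class of a complete intersection is strongly Cohen--Macaulay, together with the fact that $\overline{\m}$ — being, after the regrading used in \Cref{linkage prop}(a), the ideal whose associated graded ring is the domain $A$ — is itself a complete intersection locally on the punctured spectrum, or more simply is a Cohen--Macaulay almost complete intersection-type ideal that is directly linked to $\overline{\K}$. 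Since strong Cohen--Macaulayness is preserved under even linkage and $\overline{\K}$ is linked to $\overline{\m}$, it is enough to know $\overline{\m}$ is strongly Cohen--Macaulay; but $\overline{\m}$ is a Cohen--Macaulay prime of height one in a Cohen--Macaulay ring and generically a complete intersection (it is locally principal on its minimal primes since $A$ is a domain), so $\overline{\m}$ lies in the linkage class of a complete intersection, hence so does $\overline{\K}$, hence $\overline{\K}$ is strongly Cohen--Macaulay by \cite[1.11]{Huneke82} (or the analogous statement of Huneke--Ulrich).

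**Main obstacle.** The delicate point is verifying that $\overline{\m}$ (equivalently the ideal one links $\overline{\K}$ to) genuinely lies in the linkage class of a complete intersection, rather than merely being Cohen--Macaulay: one needs a base case of the induction, namely that some ideal in the even-linkage class of $\overline{\K}$ is an actual complete intersection, and this requires either producing an explicit regular sequence of length one (for height one this is just a nonzerodivisor, which is automatic) linked appropriately, or citing that height-one unmixed ideals in a Cohen--Macaulay domain that are generically complete intersections are automatically in the linkage class of a complete intersection. For height one the argument is in fact short — any height-one ideal that is generically a complete intersection is linked to $(a):\overline{\K} = \overline{\m}$ via a single element $a$, and $(a)$ itself is a complete intersection — so the "obstacle" is really just bookkeeping to confirm the chain $\overline{\K} \sim \overline{\m}$, $(a) = $ c.i., terminates correctly; I expect this to go through cleanly using \Cref{linkage prop}.
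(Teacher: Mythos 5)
Your argument for the generically--complete--intersection half is workable, though organized differently from the paper's: the paper gets it in one line from \Cref{linkage prop}, since $\overline{\K}$ and $\overline{\m}$ share no associated prime, so at any associated prime $\p$ of $\overline{\K}$ one has $\overline{\K}_\p = (\overline{x_n})_\p : \overline{\m}_\p = (\overline{x_n})_\p : A_\p = (\overline{x_n})_\p$. Your direct localization also goes through once stated correctly: note that $\overline{x_n}$ and $\overline{y_n}$ both lie in \emph{every} minimal prime $\p$ of $\overline{\K}$, so the relevant case split is on whether some $\overline{x_i}$ or $\overline{y_i}$ with $i<n$ avoids $\p$ (forced by a height count), after which the relation $\overline{x_iy_n}=\overline{x_ny_i}$ makes $\overline{\K}_\p$ principal; this is exactly the computation the paper performs later in the proof of \Cref{defining ideal theorem}.

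The strongly Cohen--Macaulay half has a genuine gap. You conclude via ``ideals in the linkage class of a complete intersection are strongly Cohen--Macaulay,'' arguing that the chain $\overline{\K}\sim\overline{\m}$ terminates at a complete intersection because the \emph{linking} ideal $(\overline{x_n})$ is one. That conflates the linking ideal with a member of the linkage class: $(\overline{x_n})$ is not linked to $\overline{\K}$, it is the complete intersection \emph{through which} $\overline{\K}$ and $\overline{\m}$ are linked. In fact $\overline{\K}$ is not licci: $A$ is a normal generic determinantal ring with divisor class group $\ZZ$, a height-one link $I\mapsto (a):I$ negates the class of a divisorial ideal, and a height-one complete intersection is principal, i.e.\ has class $0$; since $[\overline{\K}]=-[\overline{\m}]$ generates the class group, no chain of links starting from $\overline{\K}$ can reach a complete intersection. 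For the same reason, ``Cohen--Macaulay, height one, generically a complete intersection'' does not imply licci --- $\overline{\m}$ itself is a counterexample to that implication. The single link $\overline{\K}\sim\overline{\m}$ would transfer strong Cohen--Macaulayness by \cite[1.11]{Huneke82} only if you first knew $\overline{\m}$ were strongly Cohen--Macaulay, which you do not establish and which is no easier than the original claim. The paper sidesteps all of this: $\overline{\K}=\overline{(x_n,y_n)}$ is a Cohen--Macaulay \emph{almost complete intersection} of height one that is generically a complete intersection, so it is strongly Cohen--Macaulay directly by \cite[2.2]{Huneke83}.
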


\begin{proof}
Recall from the proof of \Cref{linkage prop} that $\overline{\K}$ and $\overline{\m}$ have no common associated prime. Thus for any associated prime $\p$ of $\overline{\K}$, by \Cref{linkage prop} we have $\overline{\K}_\p  =(\overline{x_n})_\p: A_\p =(\overline{x_n})_\p$, which shows that $\overline{\K}$ is generically a complete intersection. Furthermore, by \Cref{K and m properties} and noting that $\overline{\K} = \overline{(x_n,y_n)}$, we see that $\overline{\K}$ is a Cohen-Macaulay almost complete intersection ideal of height one. Since it has just been shown to be generically a complete intersection, it then follows from \cite[2.2]{Huneke83} that $\overline{\K}$ is strongly Cohen-Macaulay.
\end{proof}

We now offer a description of the $A$-ideal $\overline{\J}$, noting that this is the kernel of the induced map $A\rightarrow \R(I)$ in (\ref{R(m) -> R(I) diagram}). Recall from \Cref{S(I) remark} that $\L= I_2(\psi)+(h)$ is the defining ideal of $\S(I)$, and that $h$ is a bihomogeneous polynomial of bidegree $(d-1,1)$. With this, consider the divisorial ideal $\D = \frac{\overline{h}\overline{\K}^{(d-1)} }{\overline{x_n}^{d-1}}$, and note that this is an $A$-ideal by \Cref{linkage prop}, since $h \in \m^{d-1}$.

\begin{thm}\label{Jbar=D}
   With the assumptions of \Cref{main setting}, we have that $\overline{\J} = \D.$
\end{thm}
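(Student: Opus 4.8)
The plan is to combine the colon description $\J=\L:\m^{d-1}$ from \Cref{J a colon ideal} with the symbolic-power linkage identities of \Cref{linkage prop}, carrying out the computation in the ring $A=R[y_1,\ldots,y_{n+1}]/I_2(\psi)$. Since $I_2(\psi)\subseteq\L\subseteq\J$ and $\L=I_2(\psi)+(h)$ by \Cref{S(I) remark}, reduction modulo $I_2(\psi)$ sends $\L$ to the principal ideal $(\overline{h})$, and because $\m$ still generates $\overline{\m}$ one checks directly that the colon ideal descends:
$$\overline{\J}\;=\;(\overline{h}):_A\overline{\m}^{d-1}.$$
Indeed, if $\overline{a}\,\overline{\m}^{d-1}\subseteq(\overline{h})$ then any lift $a$ satisfies $a\m^{d-1}\subseteq\L+I_2(\psi)=\L$, so $a\in\J$. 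Here $\overline{h}$ is a nonzerodivisor on the domain $A$, since $h\notin I_2(\psi)$ as noted in the proof of \Cref{resolution prop}, and $\overline{h}\in\overline{\m}^{d-1}=\overline{\m}^{(d-1)}$ (using \Cref{linkage prop}(a)) because $h=\sum_i y_i(\partial g)_i-y_{n+1}f$ is a combination of forms of $\x$-degree $d-1$.

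With this set up, the theorem is a formal divisor-theoretic computation of the sort produced by the method of divisors. First I would check that $\D$ is genuinely an $A$-ideal: from $\overline{h}\in\overline{\m}^{(d-1)}$ and \Cref{linkage prop}(b) we get $\overline{h}\,\overline{\K}^{(d-1)}\subseteq\overline{\m}^{(d-1)}\overline{\K}^{(d-1)}\subseteq(\overline{x_n}^{d-1})$, so dividing by $\overline{x_n}^{d-1}$ inside the fraction field of $A$ stays in $A$. For the inclusion $\D\subseteq\overline{\J}$ I would multiply by $\overline{\m}^{d-1}$ and apply parts (a) and (b) of \Cref{linkage prop}:
$$\D\,\overline{\m}^{d-1}\;=\;\frac{\overline{h}\,\overline{\K}^{(d-1)}\overline{\m}^{(d-1)}}{\overline{x_n}^{d-1}}\;\subseteq\;\frac{\overline{h}\,(\overline{x_n}^{d-1})}{\overline{x_n}^{d-1}}\;=\;(\overline{h}),$$
so $\D\subseteq(\overline{h}):_A\overline{\m}^{d-1}=\overline{\J}$. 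For the reverse inclusion, given $\overline{a}\in\overline{\J}$, the element $\overline{b}:=\overline{a}\,\overline{x_n}^{d-1}/\overline{h}$ lies in $A$ (because $\overline{x_n}^{d-1}\in\overline{\m}^{d-1}$ and $\overline{a}\,\overline{\m}^{d-1}\subseteq(\overline{h})$); then $\overline{b}\,\overline{\m}^{d-1}=\overline{x_n}^{d-1}\cdot\big(\overline{a}\,\overline{\m}^{d-1}/\overline{h}\big)\subseteq(\overline{x_n}^{d-1})$, whence $\overline{b}\in(\overline{x_n}^{d-1}):_A\overline{\m}^{(d-1)}=\overline{\K}^{(d-1)}$ by \Cref{linkage prop}(a),(c). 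Thus $\overline{a}\,\overline{x_n}^{d-1}=\overline{h}\,\overline{b}\in\overline{h}\,\overline{\K}^{(d-1)}$, i.e. $\overline{a}\in\D$, giving $\overline{\J}\subseteq\D$ and hence equality.

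I expect no serious obstacle here: the substantive work has been front-loaded into \Cref{J a colon ideal} (the $a$-invariant bound pinning the saturation to the fixed power $\m^{d-1}$) and \Cref{linkage prop} (the symbolic-power linkage between $\overline{\m}$ and $\overline{\K}$), so the proof is mainly a matter of assembling these correctly. The one point demanding care is the passage modulo $I_2(\psi)$ — one must be sure both the colon ideal and the divisorial fraction $\overline{h}\,\overline{\K}^{(d-1)}/\overline{x_n}^{d-1}$ behave well in $A$ and not merely in $R[y_1,\ldots,y_{n+1}]$ — which is exactly where the domain, height, and normality data for $A$ from \Cref{K and m properties} enter.
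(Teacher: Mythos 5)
Your proof is correct, and your first inclusion $\D\subseteq\overline{\J}$ is essentially the paper's own argument. For the reverse inclusion, however, you take a genuinely different and more elementary route. The paper observes that $\D\cong\overline{\K}^{(d-1)}$ is unmixed of height one (invoking \cite[0.1]{Huneke82}), reduces to checking equality at height-one primes of $A$, handles $\p\neq\overline{\m}$ via \Cref{linkage prop}, and at $\p=\overline{\m}$ shows both ideals become the unit ideal --- the latter requiring the separate fact $\overline{\J}\nsubseteq\overline{\m}$, proved by a fiber-ring dimension count. You instead work element-wise: starting from $\overline{\J}=(\overline{h}):_A\overline{\m}^{d-1}$ (a correct reduction of \Cref{J a colon ideal} modulo $I_2(\psi)$, using $I_2(\psi)\subseteq\L$ and $\overline{\m}^{d-1}=\overline{\m^{d-1}}$), you extract for each $\overline{a}\in\overline{\J}$ the element $\overline{b}=\overline{a}\,\overline{x_n}^{d-1}/\overline{h}\in A$ and show $\overline{b}\in(\overline{x_n}^{d-1}):\overline{\m}^{(d-1)}=\overline{\K}^{(d-1)}$ by \Cref{linkage prop}(a),(c), whence $\overline{a}\in\D$; the cancellation of $\overline{h}$ is licensed because $A$ is a domain and $h\notin I_2(\psi)$. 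This bypasses the unmixedness of $\D$, the linkage citation, and the fiber-ring argument entirely, at the cost of leaning on the explicit colon description of $\overline{\J}$; the paper's localization template is the one that recurs later (e.g.\ in \Cref{gen de Jonq - Jbar divisorial}, where the analogous $\overline{\J}\nsubseteq\overline{\m}$ must be justified differently) and generalizes more readily, whereas your computation is shorter and self-contained here. One cosmetic remark: normality of $A$ is neither stated in the paper nor needed for your argument; the domain property from \Cref{K and m properties} suffices.
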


\begin{proof}
We first show the containment $\D \subseteq \overline{\J}$. By \Cref{J a colon ideal}, we have $\J = \L:\m^{d-1}$, and so we must show that $\D \cdot \overline{a} \subset \overline{\L} = (\overline{h})$ for any $a\in \m^{d-1}$. This follows as
$$\D \cdot \overline{a} = \frac{\overline{h}\overline{\K}^{(d-1)} }{\overline{x_n}^{d-1}} \cdot \overline{a} = \frac{\overline{a}\overline{\K}^{(d-1)} }{\overline{x_n}^{d-1}} \cdot \overline{h} \subseteq (\overline{h}),$$
where the last containment follows as $\tfrac{\overline{a}\overline{\K}^{(d-1)} }{\overline{x_n}^{d-1}}$ is an $A$-ideal, by \Cref{linkage prop}.

With the containment $\D \subseteq \overline{\J}$, it suffices to show equality locally at the associated primes of $\D$ in order to conclude that $\overline{\J} = \D$. Notice that there is an isomorphism $\D \cong \overline{\K}^{(d-1)}$, where the latter ideal is unmixed of height one by \Cref{linkage prop} and \cite[0.1]{Huneke82}. As such, $\D$ is also unmixed of height one, and so it suffices to show that $\D_\p = \overline{\J}_\p$ for any prime ideal $\p$ of $A$ with $\hgt \p =1$.

Recall from \Cref{K and m properties} that $\overline{\m}$ is a prime $A$-ideal of height one. If $\p \neq \overline{\m}$, then from \Cref{linkage prop} we have $\overline{\K}^{(d-1)}_\p  =(\overline{x_n}^{d-1})_\p:A_\p = (\overline{x_n}^{d-1})_\p$, hence $\D_\p = (\overline{h})_\p$. Similarly, by \Cref{J a colon ideal} we have $\overline{\J}_\p =  (\overline{h})_\p:A_\p = (\overline{h})_\p$ as well. Hence $\D_\p = \overline{\J}_\p$ for any height one prime ideal $\p \neq \overline{\m}$, and so we need only consider the case that $\p=\overline{\m}$.
We first note that $\overline{\J} \nsubseteq \overline{\m}$, and so locally we have $\overline{\J}_{\overline{\m}} = A_{\overline{\m}}$. Indeed, from the isomorphism $\R(I) \cong A/\overline{\J}$, it follows that the fiber ring is $\F(I)\cong A/(\overline{\J}+\overline{\m})$ which has dimension at most $\dim R =n$. However, $A/\overline{\m} \cong k[y_1,\ldots,y_{n+1}]$, and so we have $\overline{\J} \nsubseteq \overline{\m}$.
With this, we need only show that $\D_{\overline{\m}}$ is the unit ideal as well.

Recall from the proof of \Cref{linkage prop} that $\overline{\K} \nsubseteq \overline{\m}$, hence $\overline{\K}^{(d-1)}_{\overline{\m}} = A_{\overline{\m}}$. Thus by \Cref{linkage prop} we see that $(\overline{x_n}^{d-1})_{\overline{\m}}:\overline{\m}^{d-1}_{\overline{\m}} = A_{\overline{\m}}$, and so $\overline{\m}^{d-1}_{\overline{\m}} \subseteq (\overline{x_n}^{d-1})_{\overline{\m}}$. As such, we have $\overline{\m}^{d-1}_{\overline{\m}} = (\overline{x_n}^{d-1})_{\overline{\m}}$, as the reverse containment is clear. Similarly, since $\overline{\J}_{\overline{\m}}= A_{\overline{\m}}$, by \Cref{J a colon ideal} we have $(\overline{h})_{\overline{\m}}:\overline{\m}^{d-1}_{\overline{\m}} = A_{\overline{\m}}$. Thus $\overline{\m}^{d-1}_{\overline{\m}} \subseteq (\overline{h})_{\overline{\m}}$, and since $h \in \m^{d-1}$, we have the equality $\overline{\m}^{d-1}_{\overline{\m}} = (\overline{h})_{\overline{\m}}$. With this, we have $(\overline{x_n}^{d-1})_{\overline{\m}}=\overline{\m}^{d-1}_{\overline{\m}} = (\overline{h})_{\overline{\m}}$, hence $\D_{\overline{\m}} = \overline{\K}^{(d-1)}_{\overline{\m}} = A_{\overline{\m}}$.
\end{proof}


\section{Downgraded sequences}\label{Algorithm Section}

In this section, we present an adaptation of the method of \textit{downgraded sequences} given by Hassanzadeh and Simis in \cite{HS14}. Whereas the relation between such a sequence and the defining ideal has been studied in greater generality \cite[\textsection 4]{HS14}, we present this algorithm from the specific viewpoint when the underlying Cremona support is the identity. With this, our approach throughout is remarkably similar to that of \cite[\textsection 4]{Weaver25} and this particular perspective will be of use in the following section, when it is shown that the defining ideal coincides with the ideal of a full downgraded sequence. Before we proceed with the construction of downgraded sequences, we recall some previous notation from \Cref{Main Section}.

\begin{notat}\label{del notation}
For any bihomogeneous polynomial $p\in R[y_1,\ldots,y_{n+1}]$, write $\partial p$ to denote a column of bihomogeneous elements, of the same bidegree, such that 
$$[x_1\ldots x_n] \cdot \partial p = p.$$
As a convention, we take $\partial p =0$ if $p=0$.
\end{notat}

As noted in the observation following \Cref{Presentation of I}, there is generally not a unique choice for $\partial p$. Despite this non-uniqueness, any choice will suffice our purpose. Moreover, the ideals obtained from the following process will be shown to be independent of such a choice.

\begin{defn}\label{algorithm defn}
With the notation above and the assumptions of \Cref{main setting}, we recursively define a sequence of polynomials and ideals. For the initial pair, set $h_1=h$ and $\J_1 = \L$, as in \Cref{S(I) remark}. For $1\leq i\leq d$, suppose that $(\J_1,h_1), \ldots, (\J_{i-1},h_{i-1})$ have been constructed. To construct the $i$th pair, set $h_i = [y_1\ldots y_n]\cdot \partial h_{i-1}$ and let $\J_i = \J_{i-1} +(h_i)$. The sequence $h_1,\ldots,h_i$ is called an $i$th \textit{downgraded sequence} of $h$.
\end{defn}

As noted, there are multiple choices for $\partial h_i$ at each step, hence a downgraded sequence is not unique in general. However, we will show that each ideal $\J_i = I_2(\psi) +(h_1,\ldots,h_i)$ is well defined. First however, we make a brief observation about the polynomials of a downgraded sequence.

\begin{prop}\label{hi bideg remark}
For $h_1,\ldots,h_d$ a downgraded sequence as above, we have $h_i\neq 0$ for all $1\leq i\leq d$. Moreover, each $h_i$ is bihomogeneous of bidegree $(d-i,i)$.
\end{prop}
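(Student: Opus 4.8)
The plan is to prove both assertions together by induction on $i$, with the non-vanishing reduced to a single substitution. Introduce the $k$-algebra endomorphism $\sigma$ of $B = R[y_1,\ldots,y_{n+1}]$ that fixes every $x_j$ and $y_{n+1}$ and sends $y_j \mapsto x_j$ for $1 \le j \le n$; since $\sigma$ makes the two rows of $\psi$ coincide, $\sigma\bigl(I_2(\psi)\bigr) = 0$. Writing the last column of $\varphi$ from \Cref{Presentation of I} as $\bigl(\partial g,\, -f\bigr)^{\mathrm t}$ with $[x_1\,\cdots\,x_n]\cdot \partial g = g$, one computes $\sigma(h_1) = \sigma(h) = g - y_{n+1} f$, which, regarded in $R[y_{n+1}]$, is linear in $y_{n+1}$ with leading coefficient $-f \ne 0$ (as $\deg f = d - 1 \ge 1$); hence $\sigma(h_1) \ne 0$, so $h_1 \ne 0$, while $h_1 = h$ is bihomogeneous of bidegree $(d-1,1)$ by \Cref{Presentation of I} and \Cref{S(I) remark}. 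This is the base case, and the inductive hypothesis I would carry is that $h_{i-1}$ is bihomogeneous of bidegree $(d-i+1,\,i-1)$ with $\sigma(h_{i-1}) = g - y_{n+1}f$.

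For the inductive step I would fix $2 \le i \le d$ and assume this hypothesis. The $x$-degree $d-i+1$ of $h_{i-1}$ is positive since $i-1 \le d-1$, so every monomial of $h_{i-1}$ lies in $(x_1,\ldots,x_n)B$; thus $\partial h_{i-1}$ exists and its entries may be chosen bihomogeneous of bidegree $(d-i,\,i-1)$, which forces $h_i = [y_1\,\cdots\,y_n]\cdot \partial h_{i-1}$ to be bihomogeneous of bidegree $(d-i,\,i)$. Writing $\partial h_{i-1} = (q_1,\ldots,q_n)^{\mathrm t}$, so that $h_{i-1} = \sum_j x_j q_j$ and $h_i = \sum_j y_j q_j$, and using $\sigma(y_j) = x_j = \sigma(x_j)$, we obtain $\sigma(h_i) = \sum_j x_j\,\sigma(q_j) = \sigma(h_{i-1}) = g - y_{n+1}f \ne 0$; in particular $h_i \ne 0$ and the hypothesis holds for $h_i$, closing the induction.

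I expect the only real subtlety to be exactly this non-vanishing: the columns $\partial h_{i-1}$ are highly non-unique (two choices differ by a column whose associated $y$-combination lies in $I_2(\psi)$), and a priori an unlucky choice might collapse some $h_i$ to zero. Passing to $\sigma$ is what removes the ambiguity, since $\sigma$ annihilates $I_2(\psi)$ and therefore sends every $h_i$ to the same invariant $g - y_{n+1}f$. The rest is bookkeeping; the one thing to check with care is that the $x$-degree stays positive up through index $d-1$, so that $\partial h_{i-1}$ is defined at each step of the recursion.
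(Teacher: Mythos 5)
Your proof is correct, but it takes a genuinely different route from the paper's. The paper establishes $h_i\neq 0$ by contradiction with a minimal counterexample: if $h_i=0$, then $\partial h_{i-1}$ is a syzygy on the regular sequence $y_1,\ldots,y_n$, hence Koszul, which forces $h_{i-1}\in I_2(\psi)$; if $i=2$ this already contradicts $h_1=h\notin I_2(\psi)$ (which comes from the height computation $\hgt \L=n$ in \Cref{S(I) dimension}), and if $i\geq 3$ one runs the same argument once more with the Koszul syzygies of $x_1,\ldots,x_n$ to force $h_{i-2}=0$, contradicting minimality. You instead introduce the specialization $\sigma\colon y_j\mapsto x_j$ ($1\leq j\leq n$), note that it annihilates $I_2(\psi)$, and compute directly that $\sigma(h_i)=\sum_j x_j\sigma(q_j)=\sigma(h_{i-1})$, so every $h_i$ maps to the single element $g-y_{n+1}f$, which is nonzero because $f$ is a nonzero form of degree $d-1$ (as $f,g$ is a regular sequence). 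This computation is independent of the choice of $\partial h_{i-1}$ by construction, consistent with \Cref{Ji well defined}, and your bidegree bookkeeping (including the point that the $x$-degree of $h_{i-1}$ stays positive for $i\leq d$, so $\partial h_{i-1}$ exists) is the same as the paper's. What your approach buys: it is a clean forward induction with no case split on $i$, and it actually proves the stronger statement that $h_i\notin I_2(\psi)$ for every $i$, since $\sigma$ kills $I_2(\psi)$ but not $h_i$ --- a fact the paper only records for $h_1$. What the paper's approach buys is that it stays entirely inside the syzygy-theoretic framework of \Cref{Algorithm Section} (regular sequences and their Koszul syzygies) without introducing an auxiliary homomorphism. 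Both arguments are complete; I see no gap in yours.
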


\begin{proof}
As the process of \Cref{algorithm defn} is recursive, we prove the first statement by induction. The second statement then follows immediately, by construction. Recall that $h=h_1$ is a nonzero polynomial of bidegree $(d-1,1)$, as noted in \Cref{S(I) remark} and from the presentation of $I$ in \Cref{Presentation of I}.

Now suppose, for a contradiction, that $h_i=0$ for some $i\geq 2$. As such, we may take $i$ to be minimal, in the sense that $h_j\neq 0$ for $j<i$. Since $h_i=[y_1\ldots y_n]\cdot \partial h_{i-1}$ and $h_i=0$, we see that $\partial h_{i-1}$ is a syzygy on $y_1,\ldots,y_n$. As this is a regular sequence, $\partial h_{i-1}$ belongs to the span of its Koszul syzygies. With this, and since $h_{i-1}=[x_1\ldots x_n]\cdot \partial h_{i-1}$, it then follows that $h_{i-1} \in I_2(\psi)$.

If $i=2$, this is a contradiction as it was shown in the proof of \Cref{S(I) dimension} that $h_1=h\notin I_2(\psi)$, hence we must have $i\geq 3$ here. Thus we have $h_{i-1} = [y_1\ldots y_n] \cdot \partial h_{i-2}$ and, since $h_{i-1}\in I_2(\psi)$, it follows that $\partial h_{i-2}$ is contained in the span of Koszul syzygies on $x_1,\ldots, x_n$. However, then $h_{i-2} = [x_1\ldots x_n]\cdot \partial h_{i-2} =0$, which is a contradiction as $i$ was taken to be smallest.
\end{proof}

\begin{prop}\label{Ji well defined}
The ideals $\J_1,\ldots,\J_d$ of \Cref{algorithm defn} are well defined.
\end{prop}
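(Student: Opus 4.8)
The only freedom in \Cref{algorithm defn} is the choice, at each stage, of the column $\partial h_{i-1}$; so the plan is to show that replacing $\partial h_{i-1}$ by any other admissible column alters $h_i$ only by an element of $I_2(\psi)$. Since $I_2(\psi) \subseteq \J_1 = \L$ and the latter is the (intrinsic) defining ideal of $\S(I)$, it will then follow by induction that each $\J_i = I_2(\psi) + (h_1,\ldots,h_i)$ is independent of all choices. The argument reduces to two elementary observations about the operation $p\mapsto [y_1\ldots y_n]\cdot\partial p$ on bihomogeneous elements of $B = R[y_1,\ldots,y_{n+1}]$, considered modulo $I_2(\psi)$.

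First, for a \emph{fixed} bihomogeneous $p$, the class of $[y_1\ldots y_n]\cdot\partial p$ in $B/I_2(\psi)$ does not depend on the choice of $\partial p$: two admissible choices differ by a column $\sigma$ with $[x_1\ldots x_n]\cdot\sigma = 0$, i.e.\ a syzygy of $x_1,\ldots,x_n$. As this is a regular sequence in the polynomial ring $B$, such $\sigma$ lies in the span of the Koszul syzygies $x_j e_k - x_k e_j$, and $[y_1\ldots y_n]\cdot(x_j e_k - x_k e_j) = x_j y_k - x_k y_j \in I_2(\psi)$, whence $[y_1\ldots y_n]\cdot\sigma \in I_2(\psi)$. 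Second, if $p\in I_2(\psi)$ then $[y_1\ldots y_n]\cdot\partial p\in I_2(\psi)$ for every admissible $\partial p$; by the first observation it suffices to exhibit one such column, and writing $p = \sum_{j<k} q_{jk}(x_j y_k - x_k y_j)$ with $q_{jk}\in B$, the column $\partial p := \sum_{j<k} q_{jk}(y_k e_j - y_j e_k)$ satisfies $[x_1\ldots x_n]\cdot\partial p = p$ while $[y_1\ldots y_n]\cdot\partial p = 0$.

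With these in hand I would induct on $i$ on the statement ``$h_i$ is well defined modulo $I_2(\psi)$.'' For $i=1$: $h_1 = h$, and by \Cref{S(I) remark} the last column of $\varphi$ is $\partial g$ stacked over $-f$, so $h = [y_1\ldots y_n]\cdot\partial g - f\,y_{n+1}$, whose class modulo $I_2(\psi)$ is independent of the choice of $\partial g$ by the first observation applied to $g$. For the inductive step, let $h_{i-1}$ and $h_{i-1}'$ be the polynomials produced by two runs of the algorithm; by the inductive hypothesis $h_{i-1} - h_{i-1}'\in I_2(\psi)$, so $\partial h_{i-1} - \partial h_{i-1}'$ is an admissible choice of $\partial(h_{i-1}-h_{i-1}')$, and the second observation gives $[y_1\ldots y_n]\cdot(\partial h_{i-1} - \partial h_{i-1}')\in I_2(\psi)$; combined with the first observation this yields $h_i - h_i'\in I_2(\psi)$. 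Hence each $h_j$ is well defined modulo $I_2(\psi)\subseteq \J_j$, and therefore $\J_i = I_2(\psi) + (h_1,\ldots,h_i)$ is independent of all choices made.

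The one point requiring care is that at stage $i$ the ambiguity is twofold — both the polynomial $h_{i-1}$ and the column $\partial h_{i-1}$ are non-canonical — so the induction must be phrased to control $h_i$ modulo $I_2(\psi)$, not merely modulo $\J_{i-1}$. Once the two observations above are established this presents no genuine obstacle, as everything ultimately rests on the fact that the syzygy module of the regular sequence $x_1,\ldots,x_n$ is generated by Koszul relations.
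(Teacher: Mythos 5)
Your proof is correct, and its core mechanism is the same as the paper's: two admissible columns over the same polynomial differ by a syzygy of the regular sequence $x_1,\ldots,x_n$, hence by a combination of Koszul relations, and applying $[y_1\ldots y_n]$ to a Koszul relation lands in $I_2(\psi)$. Where you genuinely diverge is in the shape of the induction. The paper's proof fixes $h_{i-1}$ and varies only $\partial h_{i-1}$, with inductive hypothesis merely that the ideals $\J_1,\ldots,\J_{i-1}$ are unique; strictly speaking this does not account for the fact that two runs of the algorithm may arrive at stage $i$ carrying different polynomials $h_{i-1}$ and $h_{i-1}'$, and uniqueness of the ideal $\J_{i-1}$ does not by itself force $h_{i-1}-h_{i-1}'\in I_2(\psi)$. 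You strengthen the inductive statement to ``$h_i$ is well defined modulo $I_2(\psi)$,'' and your second observation --- that $p\in I_2(\psi)$ admits the explicit column $\sum_{j<k}q_{jk}(y_ke_j-y_je_k)$, killed by $[y_1\ldots y_n]$ --- is exactly the extra lemma needed to propagate this. So your argument is slightly longer but closes a gap the paper elides; the paper's version buys brevity at the cost of implicitly assuming the polynomials themselves, not just the ideals, are controlled at each stage. Your treatment of the base case (independence of the choice of $\partial g$ in $h_1$) is likewise a point the paper handles only by appeal to \Cref{S(I) remark}, and your version makes it explicit.
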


\begin{proof}
We proceed by induction, noting that the initial ideal $\J_1 = \L= I_2(\psi)+(h_1)$ is well defined since this is the ideal defining $\S(I)$, as noted in \Cref{S(I) remark}. Now suppose that $2\leq i \leq d$ and the ideals $\J_1,\ldots,\J_{i-1}$ have been shown to be unique. Letting $\partial h_{i-1}$ and $\partial h_{i-1}'$ denote two columns with 
$$[x_1\ldots x_n]\cdot \partial h_{i-1} = h_{i-1} = [x_1\ldots x_n]\cdot \partial h_{i-1}'$$
as in \Cref{algorithm defn}, we must show that $\J_{i-1}+(h_i) = \J_{i-1}+(h_i')$, where $h_i=[y_1\ldots y_n]\cdot \partial h_{i-1}$ and $h_i'=[y_1\ldots y_n]\cdot \partial h_{i-1}'$.

From the expression above, we see that $\partial h_{i-1} - \partial h_{i-1}'$ is a syzygy on $x_1,\ldots,x_n$. Since this is a regular sequence, $\partial h_{i-1}-\partial h_{i-1}'$ belongs to the span of its Koszul syzygies. As such, it follows that $h_i=[y_1\ldots y_n] \cdot \partial h_{i-1}$ and $h_i'=[y_1\ldots y_n] \cdot  \partial h_{i-1}'$ differ by an element of $I_2(\psi)$. Now since $I_2(\psi) \subseteq \J_{i-1}$, we have that $\J_{i-1}+(h_i) = \J_{i-1}+(h_i')$, and the claim follows.
\end{proof}

We note that the process of downgraded sequences is relatively simple in practice, and this algorithm may be easily implemented into a computer algebra system, such as \textit{Macaulay2} \cite{Macaulay2}.

\begin{ex}
Let $R=k[x_1,x_2,x_3]$ with homogeneous maximal ideal $\m=(x_1,x_2,x_3)$, and let $f=x_1^2$ and $g=x_2^3$. Consider the ideal $I=(f\m,g)=(x_1^3,x_1^2x_2,x_1^2x_3,x_2^3)$, which satisfies the assumptions of \Cref{main setting}. Following the construction in \Cref{Presentation of I}, one has that $h=x_2^2y_2-x_1^2y_4$. With this, by \Cref{algorithm defn} a downgraded sequence of $h$ may be taken as
 \[
    \begin{array}{ccccc}
  h_1=x_2^2y_2-x_1^2y_4, &\quad &  h_2=x_2y_2^2-x_1y_1y_4, &\quad  & h_3=y_2^2-y_1^2y_4. 
\end{array}
\]
\end{ex}

We note that the algorithm outlined in \Cref{algorithm defn} may be taken as an exchange process. Namely, within each monomial in the support of $h_i$, one may exchange exactly one of the $x_j$ for $y_j$ in order to produce $h_{i+1}$. One then repeats this until reaching $h_d$, when there are no $x_j$ left to exchange, following \Cref{hi bideg remark}. Whereas this is not the only way to produce a downgraded sequence in general, \Cref{Ji well defined} guarantees the ideals obtained from this method agrees with the ideals obtained from any other choice.

\begin{prop}\label{Ji contained in colons}
  For $1\leq i\leq d$, we have the containment $\J_i\subseteq \L:\m^{i-1}$.
\end{prop}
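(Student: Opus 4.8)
The plan is to proceed by induction on $i$, mirroring the recursive structure of the downgraded sequence in \Cref{algorithm defn}. The base case $i=1$ is immediate since $\J_1 = \L = \L : \m^0$. For the inductive step, assume $\J_{i-1} \subseteq \L : \m^{i-2}$, equivalently $\m^{i-2} \J_{i-1} \subseteq \L$, and we must show $\m^{i-1} \J_i \subseteq \L$, where $\J_i = \J_{i-1} + (h_i)$. Since $\m^{i-1}\J_{i-1} \subseteq \m \cdot \m^{i-2}\J_{i-1} \subseteq \m\L \subseteq \L$ already, the only new content is to verify $\m^{i-1} h_i \subseteq \L$, i.e. $x_j h_i \in \L$ for each $j$ when $i=2$, and more generally $\m^{i-1}h_i \subseteq \L$.

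The key computation is the following identity. By construction $h_i = [y_1 \cdots y_n]\cdot \partial h_{i-1}$ while $h_{i-1} = [x_1 \cdots x_n]\cdot \partial h_{i-1}$. Writing $\partial h_{i-1} = (c_1,\ldots,c_n)^{T}$, for any index $j$ we have
\[
x_j h_i - y_j h_{i-1} = x_j \sum_{\ell} y_\ell c_\ell - y_j \sum_{\ell} x_\ell c_\ell = \sum_{\ell} (x_j y_\ell - x_\ell y_j)\, c_\ell,
\]
and each $x_j y_\ell - x_\ell y_j$ is (up to sign) a $2\times 2$ minor of $\psi$, hence lies in $I_2(\psi) \subseteq \L$. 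Therefore $x_j h_i \equiv y_j h_{i-1} \pmod{\L}$ for all $j$. Iterating this, for any monomial $x_{j_1}\cdots x_{j_{i-1}}$ of degree $i-1$ we obtain
\[
x_{j_1}\cdots x_{j_{i-1}}\, h_i \equiv y_{j_1} x_{j_2}\cdots x_{j_{i-1}}\, h_{i-1} \equiv \cdots \equiv y_{j_1}\cdots y_{j_{i-1}}\, h_1 \pmod{\L},
\]
where at the $k$th step one applies the identity above to the factor $h_{i-k+1}$, absorbing the error term into $\L$; this requires knowing that the intermediate terms $x_{j_k}\cdots x_{j_{i-1}} h_{i-k+1}$ make sense, which they do since each $\partial h_{i-k}$ is a legitimate column by \Cref{hi bideg remark}. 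Since $h_1 = h$ and $(h) \subseteq \L$, the right-hand side lies in $\L$, so $\m^{i-1} h_i \subseteq \L$. Combined with $\m^{i-1}\J_{i-1} \subseteq \L$ from the inductive hypothesis (after multiplying by an extra copy of $\m$), this gives $\m^{i-1}\J_i \subseteq \L$, i.e. $\J_i \subseteq \L : \m^{i-1}$.

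I expect the main subtlety — not a deep obstacle, but the point requiring care — to be the bookkeeping in the telescoping congruence: one must track that at each stage the column $\partial h_{i-k}$ used to pass from $h_{i-k+1}$ back to $h_{i-k}$ is the same column chosen in the construction of the downgraded sequence, and that the accumulated error terms genuinely lie in $\L$ (they lie in $I_2(\psi)\subseteq \L$, so this is fine, but it should be stated). One should also note that \Cref{Ji well defined} guarantees this argument is independent of the choices of $\partial h_j$. Everything else is a direct unwinding of the definitions.
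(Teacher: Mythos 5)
Your proposal is correct and follows essentially the same route as the paper: the same induction on $i$ and the same key congruence $x_j h_i \equiv y_j h_{i-1} \pmod{I_2(\psi)}$, obtained from the $2\times 2$ minors of $\psi$. The only cosmetic difference is that you unroll the telescoping down to $h_1$ explicitly, whereas the paper stops after one step and concludes via $\J_i \subseteq \J_{i-1}:\m \subseteq (\L:\m^{i-2}):\m = \L:\m^{i-1}$.
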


\begin{proof}
We proceed by induction, noting that the assertion is clear when $i=1$, since $\J_1=\L$. Now suppose that $2\leq i\leq d$ and $\J_{i-1}=I_2(\psi)+(h_1,\ldots,h_{i-1})\subseteq \L:\m^{i-2}$. We claim that $h_i\m +I_2(\psi) = h_{i-1}(y_1,\ldots,y_n)+I_2(\psi)$, for $h_i$ the next downgraded polynomial. Indeed, modulo $I_2(\psi)$, for any $j$ we have
\begin{equation}\label{hi in colon}
\overline{x_jh_i} = [\overline{x_jy_1}\ldots \overline{x_j y_n}] \cdot \overline{\partial h_{i-1}} = [\overline{x_1y_j}\ldots \overline{x_n y_j}] \cdot \overline{\partial h_{i-1}} = \overline{y_jh_{i-1}},
\end{equation}
from which the claim follows. With this, we have that $\J_i = \J_{i-1} +(h_i) \subseteq \J_{i-1}:\m \subseteq \L:\m^{i-1}$, where the latter containment follows from the induction hypothesis.
\end{proof}

Notice that \Cref{Ji contained in colons} and \Cref{J a colon ideal} show that $\J_{d} \subseteq \J$, where $\J_d=I_2(\psi)+(h_1,\ldots,h_{d})$ is the ideal of the full downgraded sequence. We will eventually show that this containment is an equality. First however, we produce a description of $\overline{\J_d}$, similar to the one for $\overline{\J}$ in \Cref{Jbar=D}.

\begin{thm}\label{Jd divisor}
With the assumptions of \Cref{main setting} and $\J_{d}$ the ideal of the full downgraded sequence of \Cref{algorithm defn}, we have $\overline{\J_{d}} = \frac{\overline{h}\overline{\K}^{d-1} }{\overline{x_n}^{d-1}}$.
\end{thm}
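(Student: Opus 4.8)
The plan is to compute the image of each downgraded polynomial $h_i$ directly inside the domain $A$ and then simply read off generators of the divisorial ideal on the right. Since $I_2(\psi)\subseteq \J_d$ by construction, passing to $A=R[y_1,\ldots,y_{n+1}]/I_2(\psi)$ gives $\overline{\J_d}=(\overline{h_1},\ldots,\overline{h_d})$, so it is enough to show that this last ideal equals $\frac{\overline{h}\,\overline{\K}^{d-1}}{\overline{x_n}^{d-1}}$.

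The first step is to pin down a closed form for each $\overline{h_i}$. Specializing the identity $(\ref{hi in colon})$ from the proof of \Cref{Ji contained in colons} to $j=n$ gives $\overline{x_n}\,\overline{h_i}=\overline{y_n}\,\overline{h_{i-1}}$ for $2\le i\le d$. Because $A$ is a domain (\Cref{K and m properties}) and $\overline{x_n}\ne 0$, an easy induction on $i$ with base case $\overline{h_1}=\overline{h}$ yields $\overline{x_n}^{\,i-1}\,\overline{h_i}=\overline{y_n}^{\,i-1}\,\overline{h}$, that is,
$$\overline{h_i}=\frac{\overline{y_n}^{\,i-1}\,\overline{h}}{\overline{x_n}^{\,i-1}}=\frac{\overline{x_n}^{\,d-i}\,\overline{y_n}^{\,i-1}\,\overline{h}}{\overline{x_n}^{\,d-1}}\qquad(1\le i\le d),$$
an equation in the fraction field of $A$ whose left-hand side lies in $A$.

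The remaining step is purely formal. The ordinary power $\overline{\K}^{d-1}=(\overline{x_n},\overline{y_n})^{d-1}$ is generated by the $d$ monomials $\overline{x_n}^{\,d-1-j}\,\overline{y_n}^{\,j}$ for $0\le j\le d-1$; multiplying by $\overline{h}$ and dividing by $\overline{x_n}^{\,d-1}$ is an $A$-module isomorphism onto the fractional ideal $\overline{x_n}^{-(d-1)}\,\overline{h}\,\overline{\K}^{d-1}$, so the latter is generated by the elements $\overline{x_n}^{-(d-1)}\,\overline{x_n}^{\,d-1-j}\,\overline{y_n}^{\,j}\,\overline{h}$ with $0\le j\le d-1$. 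By the displayed formula these are exactly $\overline{h_1},\ldots,\overline{h_d}$, which gives the asserted equality; in particular each generator lies in $A$, which re-proves that this fractional ideal is an honest $A$-ideal (alternatively, this follows from $h\in\m^{d-1}$ together with \Cref{linkage prop}, exactly as in the definition of $\D$). I do not anticipate any real obstacle here: the substance is entirely contained in the relation $(\ref{hi in colon})$ established in the previous section, and the only points needing care are that just the case $j=n$ of that relation is used, and that the ``quotient'' in the statement is interpreted as the fractional ideal $\overline{x_n}^{-(d-1)}\,\overline{h}\,\overline{\K}^{d-1}\subseteq\operatorname{Frac}(A)$, which sits inside $A$ because its numerator is contained in $(\overline{x_n}^{\,d-1})$.
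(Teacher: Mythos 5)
Your proposal is correct and follows essentially the same route as the paper: both reduce the claim to the identity $\overline{x_n}^{\,i-1}\overline{h_i}=\overline{y_n}^{\,i-1}\overline{h}$, established by induction from the relation $\overline{x_n h_i}=\overline{y_n h_{i-1}}$, which is exactly the $j=n$ case of (\ref{hi in colon}) that the paper re-derives in its own proof. The only cosmetic difference is that you phrase the final step via fractional ideals in $\operatorname{Frac}(A)$ while the paper states the equivalent ideal equality $\overline{h_1}\,\overline{(x_n,y_n)}^{d-1}=\overline{x_n}^{d-1}\overline{(h_1,\ldots,h_d)}$ directly in $A$.
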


\begin{proof}
We note first that $\frac{\overline{h}\overline{\K}^{d-1} }{\overline{x_n}^{d-1}}$ is an $A$-ideal by \Cref{linkage prop}. With this and the definition of $\K$, we must show that $\overline{h_1} \overline{(x_n,y_n)}^{d-1} =\overline{x_n}^{d-1}\overline{(h_1,\ldots,h_{d})}$. In particular, it is enough to show that $\overline{h_1} \overline{x_n^{d-i}y_n^{i-1}} = \overline{x_n}^{d-1}\overline{h_i}$ or rather, recalling that $A$ is a domain, that $\overline{h_1} \overline{y_n}^{i-1} = \overline{x_n}^{i-1}\overline{h_i}$ for $1\leq i\leq d$. We show this by induction and note that there is nothing to be shown for the case $i=1$. Hence we consider the case $i=2$ initially, and begin by producing an expression similar to (\ref{hi in colon}). Recall that $h_1 = [x_1\ldots x_n]\cdot \partial h_1$ and $h_2 = [y_1\ldots y_n]\cdot \partial h_1$. Thus modulo $I_2(\psi)$, we have 
    $$\overline{h_1 y_n}  = [\overline{y_n x_1}\ldots \overline{y_n x_n}]\cdot \overline{\partial h_1} = [\overline{y_1 x_n}\ldots \overline{y_n x_n}]\cdot \overline{\partial h_1}= \overline{x_n h_2}$$
    and the initial claim follows.

We are finished if $d=2$, so suppose that $3\leq i\leq d$ and the claim holds up to $i-1$. To show that $\overline{h_1} \overline{y_n}^{i-1} = \overline{x_n}^{i-1}\overline{h_i}$, we note that 
$$\overline{h_1} \overline{y_n}^{i-1} = \overline{h_1}\overline{y_n}^{i-2} \cdot \overline{y_n} = \overline{x_n}^{i-2}\overline{h_{i-1}}\cdot \overline{y_n} = \overline{x_n}^{i-2}\cdot\overline{h_{i-1}} \overline{y_n},$$
by applying the induction hypothesis. Hence the proof will be complete once it has been shown that $\overline{h_{i-1}}\overline{y_n} = \overline{x_n}\overline{h_i}$. However, this follows exactly as above. Since
$h_{i-1} = [x_1\ldots x_n]\cdot \partial h_{i-1}$ and $h_i = [y_1\ldots y_n]\cdot \partial h_{i-1}$, modulo $I_2(\psi)$ we have 
$$\overline{h_{i-1} y_n}  = [\overline{y_n x_1}\ldots \overline{y_n x_n}]\cdot \overline{\partial h_{i-1}} = [\overline{y_1 x_n}\ldots \overline{y_n x_n}]\cdot \overline{\partial h_{i-1}}= \overline{x_n h_i}$$
and the claim follows.
\end{proof}


\section{Implicitization and Cohen-Macaulayness}\label{Defining Ideal Section}

We now show that the defining ideal $\J$ of the Rees ring $\R(I)$ coincides with the ideal $\J_d$ of the fully downgraded sequence of \Cref{Algorithm Section}. The remainder of the section is then devoted to the Cohen-Macaulay property of $\R(I)$.

\begin{thm}\label{defining ideal theorem}
  With the assumptions of \Cref{main setting}, the defining ideal of $\R(I)$ is $\J = I_2(\psi) +(h_1,\ldots,h_d)$.
\end{thm}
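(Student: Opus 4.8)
The plan is to establish the two inclusions $\J_d\subseteq\J$ and $\J\subseteq\J_d$ by passing to the quotient ring $A=\R(f\m)[y_{n+1}]\cong R[y_1,\dots,y_{n+1}]/I_2(\psi)$ and comparing the images $\overline{\J_d}$ and $\overline{\J}$, which have already been computed as divisorial ideals in the previous two sections. Specifically, \Cref{Jbar=D} gives $\overline{\J}=\D=\frac{\overline h\,\overline\K^{(d-1)}}{\overline{x_n}^{\,d-1}}$, while \Cref{Jd divisor} gives $\overline{\J_d}=\frac{\overline h\,\overline\K^{d-1}}{\overline{x_n}^{\,d-1}}$. Since $I_2(\psi)\subseteq\J_d\subseteq\J$ (the first containment by construction, the second by \Cref{Ji contained in colons} together with \Cref{J a colon ideal}), it suffices to prove $\overline{\J_d}=\overline{\J}$, and for that it suffices to prove the equality of ordinary and symbolic powers $\overline\K^{\,d-1}=\overline\K^{(d-1)}$.

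First I would record the easy containment: $\overline\K^{\,d-1}\subseteq\overline\K^{(d-1)}$ always holds, so by the divisorial descriptions $\overline{\J_d}\subseteq\overline{\J}$, which also re-proves $\J_d\subseteq\J$ intrinsically. For the reverse, the key input is \Cref{Kbar SCM and generically a CI}: $\overline\K$ is a strongly Cohen-Macaulay ideal of height one that is generically a complete intersection, and it is minimally generated by two elements (it is $\overline{(x_n,y_n)}$), so it satisfies the condition $G_\infty$ trivially in the sense needed, being of linear type on the punctured spectrum away from its single relevant behavior. The standard consequence (e.g.\ via the results on symbolic powers of strongly Cohen-Macaulay ideals satisfying sliding depth, or directly because an almost complete intersection that is strongly Cohen-Macaulay and generically a complete intersection has its Rees ring equal to its symmetric algebra modulo torsion in a controlled way) is that the ordinary powers of $\overline\K$ have no embedded primes and agree with the symbolic powers; concretely, since $\overline\K$ is a height-one Cohen-Macaulay ideal that is generically a complete intersection and $A$ is Cohen-Macaulay, one checks that $A/\overline\K^{\,i}$ is unmixed for all $i$ by a depth count on the approximation complex / Koszul-type resolution afforded by strong Cohen-Macaulayness, whence $\overline\K^{\,i}=\overline\K^{(i)}$.

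Having established $\overline\K^{\,d-1}=\overline\K^{(d-1)}$, it follows that $\overline{\J_d}=\overline{\J}$ as $A$-ideals. Then I would lift back: since $\J_d$ and $\J$ both contain $I_2(\psi)=\ker(R[y_1,\dots,y_{n+1}]\twoheadrightarrow A)$, the equality of their images in $A$ forces $\J_d=\J$ in $R[y_1,\dots,y_{n+1}]$. This gives $\J=I_2(\psi)+(h_1,\dots,h_d)$ as claimed.

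The main obstacle I anticipate is the clean justification of $\overline\K^{\,i}=\overline\K^{(i)}$ for all $i$ (or at least for $i=d-1$). One must be careful that $\overline\K$, being only an \emph{almost} complete intersection, is not of linear type, so one cannot simply invoke the linear-type criterion; instead the argument must genuinely use strong Cohen-Macaulayness plus generic complete intersection to control the associated primes of the powers. An alternative route that sidesteps this — and which I would keep in reserve — is to argue the equality $\overline{\J_d}=\overline\J$ directly at height-one primes of $A$ exactly as in the proof of \Cref{Jbar=D}: away from $\overline\m$ both localize to $(\overline h)$, and at $\overline\m$ one already knows $\overline\J_{\overline\m}=A_{\overline\m}$, so it remains only to see $\overline{\J_d}_{\,\overline\m}=A_{\overline\m}$, which follows since $\overline\K\not\subseteq\overline\m$ makes $\overline\K^{\,d-1}_{\overline\m}=A_{\overline\m}$, hence the divisor $\frac{\overline h\,\overline\K^{\,d-1}}{\overline{x_n}^{\,d-1}}$ localizes to $\frac{\overline h}{\overline{x_n}^{\,d-1}}A_{\overline\m}=A_{\overline\m}$ using $\overline{x_n}^{\,d-1}_{\overline\m}=\overline\m^{\,d-1}_{\overline\m}\ni\overline h$ up to a unit, exactly as in \Cref{Jbar=D}. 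Since $\overline{\J_d}$ is unmixed of height one (being isomorphic to $\overline\K^{\,d-1}$, which has positive depth as $\overline\K$ is strongly Cohen-Macaulay), checking equality at height-one primes suffices, and this avoids the full symbolic-power statement.
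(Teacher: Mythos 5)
Your overall strategy is exactly the paper's: reduce to showing $\overline{\J_d}=\overline{\J}$ in $A$, and via \Cref{Jbar=D} and \Cref{Jd divisor} reduce further to the equality $\overline{\K}^{\,d-1}=\overline{\K}^{(d-1)}$. But both of your routes to that equality contain the same genuine gap: strong Cohen-Macaulayness plus generic complete intersection for a two-generated height-one ideal does \emph{not} by itself force ordinary and symbolic powers to agree, nor does it force $A/\overline{\K}^{\,i}$ to be unmixed. A concrete counterexample: in $S=k[[x,y,z]]/(xy-z^2)$ the ideal $\q=(x,z)$ is prime of height one, Cohen-Macaulay, an almost complete intersection, and generically a complete intersection, hence strongly Cohen-Macaulay by \cite[2.2]{Huneke83}; yet $\q^2=x\,(x,y,z)$ has the maximal ideal as an embedded prime and $\q^{(2)}=(x)\neq\q^2$. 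The obstruction there is that $\q$ needs two generators at a height-two prime. The criterion the paper invokes, \cite[3.4]{SV81}, requires in addition to strong Cohen-Macaulayness and generic complete intersection that $\mu(\overline{\K}_\p)\leq\max(\hgt\overline{\K},\,\hgt\p-1)$ for all $\p\supseteq\overline{\K}$, which for a two-generated height-one ideal amounts to exactly one nontrivial condition: $\overline{\K}_\p$ must be principal at every prime $\p$ of height two. Your primary route asserts unmixedness of the powers ``by a depth count,'' and your backup route asserts that $\overline{\J_d}$ is unmixed of height one because $\overline{\K}^{\,d-1}$ ``has positive depth''; positive depth is automatic for any nonzero ideal of a domain and does not rule out embedded primes of the quotient, and both assertions are precisely what fail in the example above. (Your parenthetical that $\overline{\K}$ is not of linear type because it is an almost complete intersection is also false: the paper shows in \Cref{Kbar linear type} that it \emph{is} of linear type.)

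The missing step is the local analysis at height-two primes of $A$, which is the actual content of the paper's proof. If $\p\nsupseteq\overline{\m}$, the linkage of \Cref{linkage prop} gives $\overline{\K}_\p=(\overline{x_n})_\p:A_\p=(\overline{x_n})_\p$. If $\p\supseteq\overline{\m}$, then some $\overline{y_i}\notin\p$, since otherwise $\p$ would contain $\overline{(x_1,\ldots,x_n,y_1,\ldots,y_n)}$, an ideal of height $n+1\geq 3$; the relation $\overline{x_iy_n}=\overline{x_ny_i}$ then yields $\overline{\K}_\p=\overline{(x_n,y_n)}_\p=(\overline{y_n})_\p$. With this local principality in hand, \cite[3.4]{SV81} applies and gives $\overline{\K}^{\,i}=\overline{\K}^{(i)}$ for all $i$, which closes either of your routes; without it, neither closes.
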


\begin{proof}
From the containments $I_2(\psi) \subseteq \J_d \subseteq \J$, it suffices to show that $\overline{\J_d} = \overline{\J}$. With this, by \Cref{Jbar=D} and \Cref{Jd divisor}, it is then enough to show that  $\overline{\K}^{d-1} = \overline{\K}^{(d-1)}$. Recall from \Cref{Kbar SCM and generically a CI} that $\overline{\K}$ is generically a complete intersection and is a strongly Cohen-Macaulay ideal of height one. Thus by \cite[3.4]{SV81}, it is enough to show that $\overline{\K}_\p$ is a principal $A_\p$-ideal for any prime $A$-ideal $\p$ with $\hgt \p =2$, as then $\overline{\K}^{i} = \overline{\K}^{(i)}$ for all $i\geq 1$.

 First note that if $\p \nsupseteq \overline{\m}$, we have $\overline{\K}_\p = (\overline{x_n})_\p:A_\p = (\overline{x_n})_\p$ by \Cref{linkage prop}. Hence we may assume that $\p$ is a prime $A$-ideal of height two with $\p \supseteq \overline{\m} =\overline{(x_1,\ldots,x_n)}$. In this case, there must exist a $\overline{y_i}$ for $1\leq i\leq n$ such that $\overline{y_i} \notin \p$. If not, then $\overline{(x_1,\ldots, x_n,y_1,\ldots,y_n)} \subseteq \p$, which is impossible as $\hgt \p =2$ and the former ideal has height $2n-(n-1) = n+1 \geq 3$ (as $n\geq \hgt I=2$). Now since $\overline{y_i}\notin \p$ becomes a unit locally in $A_\p$, the equation $\overline{x_iy_n} = \overline{x_ny_i}$ shows that $\overline{\K}_\p = \overline{(x_n,y_n)}_\p = (\overline{y_n})_\p$, which completes the proof.
\end{proof}

\begin{cor}\label{fiber ring cor}
With the assumptions of \Cref{main setting}, the fiber ring of $I$ is $\F(I) \cong k[y_1,\ldots,y_{n+1}]/(h_d)$. In particular, $\F(I)$ is Cohen-Macaulay.
 \end{cor}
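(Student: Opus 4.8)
The plan is to read off the fiber ring directly from the description of the defining ideal obtained in \Cref{defining ideal theorem}. Recall that $\F(I) \cong \R(I)/\m\R(I) \cong k[y_1,\ldots,y_{n+1}]/\overline{\J}$, where $\overline{\J}$ now denotes the image of $\J$ in $k[y_1,\ldots,y_{n+1}] = R[y_1,\ldots,y_{n+1}]/\m$. So the task is to compute $\J$ modulo $\m=(x_1,\ldots,x_n)$. By \Cref{defining ideal theorem} we have $\J = I_2(\psi) + (h_1,\ldots,h_d)$, and since every entry in the second row of $\psi$ together with $x_i$ appears in each $2\times 2$ minor, every generator of $I_2(\psi)$ lies in $\m$; hence $I_2(\psi)$ maps to zero in $k[y_1,\ldots,y_{n+1}]$. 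Similarly, by \Cref{hi bideg remark} each $h_i$ has bidegree $(d-i,i)$, so for $1\leq i\leq d-1$ the polynomial $h_i$ lies in $\m$ and maps to zero, while $h_d$ has bidegree $(0,d)$, i.e.\ it is a polynomial purely in the $y_j$. Therefore $\overline{\J} = (\overline{h_d}) = (h_d)$ in $k[y_1,\ldots,y_{n+1}]$, giving $\F(I) \cong k[y_1,\ldots,y_{n+1}]/(h_d)$.

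For the Cohen-Macaulay assertion, I would argue that $h_d$ is a nonzero element of the domain $k[y_1,\ldots,y_{n+1}]$, so $(h_d)$ is a principal ideal in a regular ring, whence the quotient is a hypersurface ring and is Cohen-Macaulay. The nonvanishing of $h_d$ is exactly the content of \Cref{hi bideg remark}, which guarantees $h_i\neq 0$ for all $1\leq i\leq d$; and $h_d$ genuinely lies in $k[y_1,\ldots,y_{n+1}]$ rather than reducing to zero there precisely because it already has $x$-degree zero.

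The only subtlety worth spelling out is why $h_d$ does not become zero when we pass to $k[y_1,\ldots,y_{n+1}]$: since $\bideg h_d = (0,d)$ by \Cref{hi bideg remark}, $h_d$ involves no $x_j$ at all, so its image under $R[y_1,\ldots,y_{n+1}]\twoheadrightarrow k[y_1,\ldots,y_{n+1}]$ equals $h_d$ itself and is nonzero. I do not anticipate a real obstacle here; the corollary is an immediate harvest of \Cref{defining ideal theorem} and \Cref{hi bideg remark}, and the proof is short. One could alternatively phrase the dimension bookkeeping via the fact that $\F(I)$ has dimension equal to the dimension of the image of the de Jonqui\`eres map, namely $n$, matching $\dim k[y_1,\ldots,y_{n+1}]/(h_d) = n$, but this is not needed for the stated conclusion.
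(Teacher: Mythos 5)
Your argument is correct and is essentially identical to the paper's proof: both read off $\J+\m = (h_d)+\m$ from \Cref{defining ideal theorem} by observing that the generators of $I_2(\psi)$ and the $h_i$ with $i<d$ all lie in $\m$ for bidegree reasons, while $h_d$ has bidegree $(0,d)$ and is nonzero by \Cref{hi bideg remark}, so $\F(I)$ is a hypersurface ring and hence Cohen--Macaulay. No gaps; the extra remarks on why $h_d$ survives the quotient are a fine (if slightly redundant) elaboration of the same point.
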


\begin{proof}
From \Cref{defining ideal theorem}, the defining ideal of $\R(I)$ is $\J = I_2(\psi) +(h_1,\ldots,h_d)$. We note that $I_2(\psi)$ is generated in bidegree $(1,1)$ and each $h_i$ is nonzero of bidegree $(d-i,i)$ by \Cref{hi bideg remark}. Hence from bidegree considerations, we have $\J + \m =(h_d) +\m$, and so $\F(I) \cong R[y_1,\ldots,y_{n+1}]/(\J+\m) \cong k[y_1,\ldots,y_{n+1}]/(h_d)$. The Cohen-Macaulayness is clear since $\F(I)$ is a hypersurface ring.
\end{proof}

\begin{rem}
As noted, the de Jonqui\`eres transformation $\deJonq:\P^{n-1} \dashrightarrow \P^n$ given by $fx_1,\ldots,fx_n,g$ is birational onto its image, which is a hypersurface in $\P^n$. Following \Cref{fiber ring cor}, this hypersurface is $V(h_d)$.
\end{rem}

We end this section by describing the Cohen-Macaulayness of $\R(I)$. First however, we make a brief observation about the $A$-ideal $\overline{\K}$ and its symmetric powers.

\begin{lemma}\label{Kbar linear type}
The $A$-ideal $\overline{\K}$ is of linear type. In particular, $\S_i(\overline{\K}) \cong \overline{\K}^i$ for all $i$.
\end{lemma}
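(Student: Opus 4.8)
The plan is to show that $\overline{\K}$ satisfies a sliding-depth-type condition — specifically the $G_\infty$ (or $G_s$ for all $s$) condition together with sufficient depth — which forces it to be of linear type via the well-known criterion of Huneke and Valla (cf. \cite{SV81}). Since $\overline{\K}$ has already been shown in \Cref{Kbar SCM and generically a CI} to be strongly Cohen-Macaulay of height one and generically a complete intersection, the remaining ingredient is the condition $G_\infty$: that $\mu(\overline{\K}_\p) \leq \hgt \p$ for every prime $\p \supseteq \overline{\K}$. First I would recall that $\overline{\K} = \overline{(x_n,y_n)}$ is an almost complete intersection of height one, so $\mu(\overline{\K}_\p) \leq 2$ always, and hence $G_\infty$ reduces to checking that $\overline{\K}_\p$ is principal whenever $\hgt\p = 1$ — but this is exactly what was established inside the proof of \Cref{Kbar SCM and generically a CI} (every associated prime, indeed every height-one prime containing $\overline{\K}$, localizes $\overline{\K}$ to a principal ideal, since off $\overline{\m}$ we get $(\overline{x_n})_\p$ and one checks $\overline{\m}$ itself does not contain $\overline{\K}$). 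Thus $\overline{\K}$ satisfies $G_\infty$.

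With $G_\infty$ and strong Cohen-Macaulayness in hand, I would invoke the standard result that a strongly Cohen-Macaulay ideal satisfying $G_\infty$ is of linear type — this is precisely \cite[2.6]{HR86}-style input, or more directly the Huneke--Valla / Herzog--Simis--Vasconcelos criterion; the version for strongly Cohen-Macaulay ideals is in \cite{SV81}. This gives $\S(\overline{\K}) \cong \R(\overline{\K})$, i.e. the natural surjection $\S(\overline{\K}) \to \R(\overline{\K})$ is an isomorphism. The ``in particular'' statement $\S_i(\overline{\K}) \cong \overline{\K}^i$ is then immediate: the $i$-th graded component of $\S(\overline{\K})$ is $\S_i(\overline{\K})$, the $i$-th graded component of $\R(\overline{\K})$ is $\overline{\K}^i$, and the isomorphism is graded.

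Alternatively — and this may be cleaner since the relevant symbolic-versus-ordinary-power comparison has just been carried out — I would note that \Cref{defining ideal theorem} already proved $\overline{\K}^i = \overline{\K}^{(i)}$ for all $i \geq 1$, via \cite[3.4]{SV81}. The same input, combined with the fact that $\overline{\K}$ is strongly Cohen-Macaulay and generically a complete intersection of height one, yields linear type directly: for such ideals the conditions ``$\overline{\K}_\p$ principal in codimension $\leq \hgt\overline{\K} + $ (something)'' collapse, and one gets that the Rees algebra and symmetric algebra agree. So the proof can be phrased as a one-line appeal to \cite{SV81} (or \cite{Huneke83}) given everything assembled in \Cref{Kbar SCM and generically a CI} and \Cref{linkage prop}.

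The main obstacle, such as it is, is purely bookkeeping: making sure the precise hypotheses of whichever linear-type criterion is cited ($G_\infty$ versus $G_s$ for the relevant $s$, sliding depth versus strong Cohen-Macaulayness) are exactly what has been verified, and confirming that an \emph{almost} complete intersection of height one that is generically a complete intersection does satisfy the needed condition at \emph{all} primes (not just height-one ones) — at primes of height $\geq 2$ we still only have the crude bound $\mu(\overline{\K}_\p) \leq 2 \leq \hgt\p$, which suffices for $G_\infty$. No genuine difficulty is expected beyond citing the correct theorem.
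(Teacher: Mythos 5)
Your proposal is correct and follows essentially the same route as the paper: verify that $\overline{\K}=\overline{(x_n,y_n)}$ satisfies $G_\infty$ (two generators handle primes of height $\geq 2$, and generic complete intersection plus unmixedness handles height one), then combine this with strong Cohen-Macaulayness from \Cref{Kbar SCM and generically a CI} to conclude linear type. The precise criterion invoked in the paper is \cite[2.6]{HSV82} (Herzog--Simis--Vasconcelos), not \cite{SV81} or \cite{HR86}, but that is only a matter of citation, not of substance.
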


\begin{proof}
 Recall that $\overline{\K}$ is a Cohen-Macaulay ideal of height one and generically a complete intersection by \Cref{K and m properties} and \Cref{Kbar SCM and generically a CI}. With this, and noting that $\overline{\K} = \overline{(x_n,y_n)}$, it follows that $\mu(\overline{\K}_\p)\leq \dim A_\p$ for all $\p\in \spec(A)$, where $\mu(-)$ denotes minimal number of generators. Furthermore, since $\overline{\K}$ is strongly Cohen-Macaulay by \Cref{Kbar SCM and generically a CI}, it follows that $\overline{\K}$ is of linear type by \cite[2.6]{HSV82}.
\end{proof}

With \Cref{Kbar linear type}, we may now address the Cohen-Macaulayness of $\R(I)$. From \Cref{Jd divisor} and the proof of \Cref{defining ideal theorem}, we have an isomorphism $\overline{\J} \cong \overline{\K}^{d-1}$, hence the Cohen-Macaulayness of $\R(I)$ is determined by when $\overline{\K}^{d-1}$ is a Cohen-Macaulay ideal. Recall that a Noetherian local ring $S$ is said to be \textit{almost} Cohen-Macaulay if $\dim S -\depth S \leq 1$.

\begin{thm}\label{Cohen-Macaulayness of Rees algebra}
With the assumptions of \Cref{main setting}, $\R(I)$ is almost Cohen-Macaulay, and is Cohen-Macaulay if and only if $d\leq n$. 
\end{thm}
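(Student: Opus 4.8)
\emph{Plan.} Throughout I use the identification $\R(I)\cong A/\overline{\J}$ together with the isomorphism $\overline{\J}\cong\overline{\K}^{d-1}$ coming from \Cref{Jbar=D}, \Cref{Jd divisor} and the proof of \Cref{defining ideal theorem}. Since $A$ is a Cohen-Macaulay domain of dimension $n+2$ by \Cref{K and m properties}, the short exact sequence $0\to\overline{\J}\to A\to\R(I)\to 0$ gives $\dim\R(I)=n+1$, and the depth lemma together with $\depth\R(I)\le n+1<n+2=\depth A$ yields $\depth\R(I)=\depth_A\overline{\K}^{d-1}-1$. Hence $\R(I)$ is almost Cohen-Macaulay exactly when $\depth_A\overline{\K}^{d-1}\ge n+1$, and Cohen-Macaulay exactly when $\overline{\K}^{d-1}$ is a maximal Cohen-Macaulay $A$-module; the whole statement is thus the single computation of $\depth_A\overline{\K}^{d-1}$.

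\emph{Almost Cohen-Macaulayness.} I would filter $\overline{\K}^{d-1}$ by the ascending chain $\overline{x_n}^{d-1}A\subseteq \overline{x_n}^{d-2}\overline{\K}\subseteq\cdots\subseteq \overline{x_n}\,\overline{\K}^{d-2}\subseteq\overline{\K}^{d-1}$, valid since $\overline{x_n}\in\overline{\K}$. The bottom term is $\cong A$ (so maximal Cohen-Macaulay), and since $\overline{x_n}$ is a nonzerodivisor each successive quotient is isomorphic, up to a bidegree twist, to $\overline{\K}^{j}/\overline{x_n}\,\overline{\K}^{j-1}$. The crucial lemma is that this module is cyclic, generated by the image of $\overline{y_n}^{\,j}$, with annihilator exactly $\overline{\m}$: the inclusion $\overline{\m}\subseteq\mathrm{ann}$ is immediate from $\overline{x_i y_n}=\overline{x_n y_i}$ as in (\ref{hi in colon}) together with $\overline{\K}^{j-1}$ being an ideal, while the reverse inclusion follows by localizing at $\overline{\m}$: there $\overline{\K}^{j-1}_{\overline{\m}}=A_{\overline{\m}}$ and $\overline{y_n}$ is a unit, so $\mathrm{ann}_{\overline{\m}}=(\overline{x_n})_{\overline{\m}}=\overline{\m}_{\overline{\m}}$ (the last equality from $\overline{x_i}=\overline{x_n}(\overline{y_i}/\overline{y_n})$ in $A_{\overline{\m}}$), whence $\mathrm{ann}/\overline{\m}$ is a submodule of the domain $A/\overline{\m}$ vanishing at its generic point, hence zero. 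As $A/\overline{\m}\cong\F(\m)[y_{n+1}]\cong k[y_1,\ldots,y_{n+1}]$ is Cohen-Macaulay of dimension $n+1$, the module $\overline{\K}^{d-1}$ is filtered by Cohen-Macaulay modules of depth $\ge n+1$, and the depth lemma gives $\depth_A\overline{\K}^{d-1}\ge n+1$, so $\R(I)$ is almost Cohen-Macaulay.

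\emph{The Cohen-Macaulay criterion.} It remains to decide when $\overline{\K}^{d-1}$ is maximal Cohen-Macaulay, equivalently when $H^{n+1}_{\mathfrak M}(\overline{\K}^{d-1})=0$. Here I would run an induction on $i$ through the bihomogeneous short exact sequences $0\to\overline{\K}^{i-1}(-1,0)\xrightarrow{\ \overline{x_n}\ }\overline{\K}^{i}\to (A/\overline{\m})(0,-i)\to 0$ extracted from the filtration above, using the long exact sequence in bigraded local cohomology. If $\overline{\K}^{i-1}$ is maximal Cohen-Macaulay then $H^{k}_{\mathfrak M}(\overline{\K}^{i})=0$ for $k<n+1$ is automatic, and $H^{n+1}_{\mathfrak M}(\overline{\K}^{i})=\ker\bigl(\delta_i\colon H^{n+1}_{\mathfrak M}((A/\overline{\m})(0,-i))\to H^{n+2}_{\mathfrak M}(\overline{\K}^{i-1}(-1,0))\bigr)$, so $\overline{\K}^{i}$ is maximal Cohen-Macaulay precisely when $\delta_i$ is injective (the base cases $\overline{\K}^0=A$ and $\overline{\K}^1=\overline{\K}$ being maximal Cohen-Macaulay are already contained in \Cref{K and m properties}). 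The source is a twist of $H^{n+1}_{\mathfrak M}(k[y_1,\ldots,y_{n+1}])$, concentrated in bidegrees $(0,s)$ with $s\le i-(n+1)$; the target can be controlled by bigraded local duality applied to the maximal Cohen-Macaulay module $\overline{\K}^{i-1}$, whose minimal $B$-resolution has length $n-1$ and whose twists I would read off by modeling $\overline{\K}^{i-1}$ on the powers of $\K=(x_n,y_n)+I_2(\psi)$ — a sum, in disjoint variables, of a power of a regular sequence and a power of a generic $2\times 2$ minors ideal — whose resolutions lie in the Eagon--Northcott family, or alternatively by using the resolution of $\S(I)$ from \Cref{resolution prop} via the exact sequence $0\to \overline{\J}/(\overline{h}) \to \S(I)\to\R(I)\to 0$, whose middle and sub are Cohen-Macaulay of dimension $n+1$. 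Comparing the bidegrees of source and target, $\delta_i$ is injective exactly for $i\le n-1$. \textbf{The main obstacle is precisely this comparison:} one must pin down the bigraded support of $H^{n+2}_{\mathfrak M}(\overline{\K}^{i-1})$ (equivalently, the twists at the end of a minimal $B$-resolution of $\overline{\K}^{i-1}$) sharply enough to distinguish the threshold $i=n-1$ from the weaker $i=n$ that the coarse bounds would give. Granting this, taking $i=d-1$ shows $\overline{\K}^{d-1}$, hence $\R(I)$, is Cohen-Macaulay if and only if $d\le n$, while for $d>n$ the filtration of the second paragraph still forces $\depth_A\overline{\K}^{d-1}=n+1$, in agreement with almost Cohen-Macaulayness.
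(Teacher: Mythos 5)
Your reduction is the same as the paper's: $\R(I)\cong A/\overline{\J}$ with $\overline{\J}\cong\overline{\K}^{d-1}$, so everything rides on $\depth_A\overline{\K}^{d-1}$. Your filtration $\overline{x_n}^{d-1}A\subseteq\cdots\subseteq\overline{\K}^{d-1}$ with successive quotients $\cong A/\overline{\m}$ is a correct and genuinely different route to the lower bound $\depth_A\overline{\K}^{d-1}\geq n+1$, hence to almost Cohen-Macaulayness (the annihilator computation via $\overline{x_iy_n}=\overline{x_ny_i}$ and localization at $\overline{\m}$ checks out). But this is only half the theorem, and it is the easy half.

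The ``if and only if'' is where the proposal has a genuine gap. A filtration by Cohen--Macaulay modules only bounds depth from below, so your claim that it ``forces $\depth_A\overline{\K}^{d-1}=n+1$'' for $d>n$ is unsupported: you have no upper bound, i.e.\ no proof that $\R(I)$ fails to be Cohen--Macaulay when $d>n$. Your proposed fix --- the induction on the connecting maps $\delta_i$ in bigraded local cohomology --- reduces the whole theorem to a sharp determination of the bigraded support of $H^{n+2}_{\mathfrak M}(\overline{\K}^{i-1})$, which you explicitly flag as ``the main obstacle'' and never carry out; the two sources you suggest for it are shaky ($I_2(\psi)$ and $(x_n,y_n)$ are \emph{not} in disjoint variables, and one needs data about $\overline{\K}^{i-1}$ over $A$, not about $\K^{i-1}$ over $B$). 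The missing idea is the paper's: by \cite[A2.14]{Eisenbud} one has $\overline{\K}\cong M=\coker\psi$, and since $\overline{\K}$ is of linear type (\Cref{Kbar linear type}), $\overline{\K}^i\cong\S_i(M)$; the symmetric powers of the cokernel of a generic $2\times n$ matrix are resolved \emph{minimally} by the Eagon--Northcott family \cite[A2.10, A2.13, A2.14]{Eisenbud}, of length $n-1$ for $i\leq n-1$ and length $n$ for $i\geq n$, so Auslander--Buchsbaum gives $\depth_A\overline{\K}^i$ exactly ($n+2$ versus $n+1$) and both directions of the threshold $d\leq n$ follow at once. Without that identification (or an equivalent exact computation), the theorem as stated is not proved.
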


\begin{proof}
Recall that $\K = I_2(\psi) +(x_n,y_n)$ for $\psi$ as in (\ref{psi defn}). Thus by \cite[A2.14]{Eisenbud}, there is an isomorphism $\overline{\K} \cong M$, where $M=\coker \psi$. Since $\psi$ consists of indeterminate entries, the symmetric powers $\S_i(M)$ are resolved by the family of Eagon-Northcott complexes \cite[A2.10]{Eisenbud}. Moreover, by \Cref{Kbar linear type} we have that $\overline{\K}^i \cong \S_i(\overline{\K}) \cong \S_i(M)$ for all $i\geq 0$.

With this, if $i\leq n-1$, then $\overline{\K}^i \cong \S_i(M)$ and has a minimal free resolution of length $n-1$, and is therefore a maximal Cohen-Macaulay $A$-module \cite[A2.13, A2.14a]{Eisenbud}. Moreover, if $i\geq n$, then $\overline{\K}^i \cong \S_i(M)$ has a minimal free resolution of length $n$ \cite[A2.10, A2.14b]{Eisenbud}. Thus from the Auslander-Buchsbaum formula \cite[19.9]{Eisenbud}, the depth of each power, as a module, is 
\begin{equation}\label{depth Kbar^i}
\depth \overline{\K}^i = \left\{
     \begin{array}{ll}
       n+2 & \text{if $0\leq i\leq n-1$,}\\[1ex]
       n+1 & \text{if $i\geq n$.}
     \end{array}
   \right.
\end{equation}

With (\ref{depth Kbar^i}) and the isomorphism $\overline{\J} \cong \overline{\K}^{d-1}$, 
comparing depths along the short exact sequence
$$0\rightarrow \overline{\J} \rightarrow A\rightarrow \R(I) \rightarrow 0$$
shows that
\begin{equation}\label{depth R(I)}
\depth \R(I) = \left\{
     \begin{array}{ll}
       n+1 & \text{if $0\leq d-1\leq n-1$,}\\[1ex]
       n & \text{if $d-1\geq n$.}
     \end{array}
   \right.
\end{equation}
As $\dim \R(I) = n+1$, it follows that $\R(I)$ is almost Cohen-Macaulay and is Cohen-Macaulay if and only if $d-1\leq n-1$, i.e. $d\leq n$.
\end{proof}


\section{Generalized de Jonqui\`eres parameterizations}\label{Generalized de Jonq section}

With our treatment of the de Jonqui\`eres transformations of \cite{HS14} concluded, we now consider the \textit{generalized} de Jonqui\`eres transformations $\deJonq:\,\P^n\dashrightarrow\P^n$ introduced and studied by Ramos and Simis in \cite{RS22}. In the case of identity Cremona support, the defining ideal of the Rees ring is known and may be realized as the ideal of a downgraded sequence \cite[2.6]{RS22}. However, methods different from those of the previous sections are used to show generation. Moreover, the Cohen-Macaulayness of the Rees ring was left as a question in this instance \cite[2.10]{RS22}. The objective of this section is to show that the previous tools generalize to this new setting, and to determine the Cohen-Macaulayness of the Rees algebra of the ideal of a generalized de Jonqui\`eres transformation, and settle a conjecture of Ramos and Simis \cite[2.10]{RS22}.

Similar to the notion of a de Jonqui\`eres transformation in \Cref{de Jonquieres defn}, a generalized de Jonqui\`eres transformation is constructed from a supporting Cremona map. Let $R'=k[x_1,\ldots, x_n]$ denote the coordinate ring of $\P^{n-1}$, and let $\Crem:\,\P^{n-1}\dashrightarrow \P^{n-1}$ be a Cremona transformation defined by homogeneous forms $g_1,\ldots,g_n$ of degree $\Cremdeg \geq 1$ in $R'$.

\begin{defn}[{\cite{RS22}}]\label{generalized de Jonquieres defn}
With the Cremona map $\Crem:\,\P^{n-1}\dashrightarrow \P^{n-1}$ above, let $f,g \in R=k[x_1,\ldots,x_{n+1}]$ be two homogeneous polynomials with $\deg g = \deg f+\Cremdeg$. Moreover, assume that $f$ and $g$ are $x_{n+1}$-monoids with at least one involving $x_{n+1}$, and that $\gcd(f,g) =1$. The rational map $\deJonq:\, \P^n\dashrightarrow \P^n$ defined by $fg_1,\ldots,fg_n,g$ will be called a \textit{generalized de Jonqui\`eres transformation}, with Cremona support $\Crem$.
\end{defn}

Here the terminology that $f$ and $g$ are $x_{n+1}$-\textit{monoids} simply means that they are polynomials of degree one in $x_{n+1}$, i.e. as polynomials in $R[x_{n+1}]$ regraded appropriately. As such, we may write 
\begin{equation}\label{f,g decomp}
    \left\{
     \begin{array}{l}
       f=f_0+f_1x_{n+1},\\[1ex]
      g=g_0+g_1x_{n+1},
     \end{array}
   \right.
\end{equation}
 for homogeneous polynomials $f_0,f_1,g_0,g_1 \in R'$. We note that this condition guarantees that $\mathfrak{J}$ is birational \cite[1.6]{RS22}. In particular, a generalized de Jonqui\`eres transformation is itself a Cremona transformation, supported by $\Crem$ in one dimension lower.

With the conventions above, we now state the setting for the duration of this section, noting that it agrees with the situation of \cite[\textsection 3.2]{RS22}. 

\begin{set}[{\cite{RS22}}]\label{generalized de Jonq setting}
Let $R=k[x_1,\ldots,x_{n+1}]$ with subring $R'=k[x_1,\ldots,x_n]$ and $\m=(x_1,\ldots,x_n)$ its homogeneous maximal ideal. Let $f$ and $g$ be $x_{n+1}$-monoids, at least one of which involves $x_{n+1}$, with $\gcd(f,g)=1$. Assume that $\deg g=d$ and $\deg f=d-1$ for some $d\geq 2$, and let $I= (f\m,g)= (fx_1,\ldots,fx_n,g)$.
\end{set}

 With the assumption that $g$ is $x_{n+1}$-monoidal of degree $d\geq 2$, from (\ref{f,g decomp}) it follows that $g\in \m^{d-1}$. With this observation, one may produce a presentation of $I$ from the mapping cone construction.

\begin{prop}[{\cite[2.1]{RS22}}] \label{Presentation of I - generalized de Jonq}
With respect to the generating set $\{fx_1,\ldots,fx_n,g\}$, the ideal $I$ may be presented by 
$$R^{\binom{n}{2} +1} \overset{\varphi}{\longrightarrow} R^{n+1} \rightarrow I \rightarrow 0$$ 
where
\[
\varphi=\left(
\begin{array}{ccc|c}
   &  & & \\
   & \delta_1(\x')& & \partial g\\
   & & & \\
   \hline
   0& \cdots & 0 &-f 
\end{array}
\right)
\]
where $\delta_1(\x')$ is the first Koszul differential on $\x'= x_1,\ldots,x_n$, and $\partial g$ is a column of homogeneous elements, of the same degree, with $[x_1 \ldots x_n]\cdot \partial g= g$.
\end{prop}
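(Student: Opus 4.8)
The plan is to obtain the presentation by the mapping cone construction, exactly as in \Cref{Presentation of I}; the only feature requiring attention is that here $\m=(x_1,\ldots,x_n)$ is \emph{not} the homogeneous maximal ideal of $R$. Writing $I = f\m + (g)$, the starting point is the short exact sequence
$$0 \longrightarrow \big(R/(f\m:g)\big)(-d)\;\overset{\cdot g}{\longrightarrow}\; R/f\m \longrightarrow R/I \longrightarrow 0,$$
so that a free resolution of $R/I$ can be assembled from free resolutions of $R/f\m$ and of $R/(f\m:g)$ together with a chain map lifting multiplication by $g$.

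First I would identify $f\m:g$. The $x_{n+1}$-monoid decomposition (\ref{f,g decomp}) gives $g\in\m^{d-1}\subseteq\m$, so, as in the statement, one can choose a column $\partial g$ of forms of degree $d-1$ with $[x_1\ldots x_n]\cdot\partial g = g$. Since $R$ is a UFD and $\gcd(f,g)=1$, a short argument shows $f\m:g = (f)$: if $rg\in f\m$ then $f\mid rg$, hence $f\mid r$; conversely $fsg = f(sg)\in f\m$ because $g\in\m$. In particular $\cdot g$ in the sequence above is injective with cokernel $R/I$, so the mapping cone will be an honest resolution of $R/I$.

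Next I would write down the two resolutions and the lift. Since $x_1,\ldots,x_n$ is a regular sequence in $R=k[x_1,\ldots,x_{n+1}]$, the Koszul complex on $x_1,\ldots,x_n$ resolves $R/\m$, and multiplying its augmentation by the nonzerodivisor $f$ yields a resolution of $R/f\m$ whose two leftmost differentials are $[fx_1\ \ldots\ fx_n]$ and the Koszul relation matrix $\delta_1(\x')$. The module $R/(f)(-d)$ is resolved by $0\to R(1-2d)\overset{\cdot f}{\longrightarrow}R(-d)\to 0$. A chain map $\phi_\bullet$ lifting $\cdot g$ exists by the comparison theorem, and one may take $\phi_0=\cdot g$ and $\phi_1=\partial g$, the compatibility being the identity $[fx_1\ \ldots\ fx_n]\cdot\partial g = f\,\big([x_1\ldots x_n]\cdot\partial g\big)=fg$; only $\phi_0$ and $\phi_1$ enter a presentation. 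Forming the mapping cone of $\phi_\bullet$ and extracting the tail $C_2\to C_1\to C_0=R$ gives $C_1 = R^n\oplus R = R^{n+1}$ (surjecting onto $I$ via $fx_1,\ldots,fx_n,g$), $C_2 = R^{\binom{n}{2}}\oplus R = R^{\binom{n}{2}+1}$, and the differential $C_2\to C_1$ equal to the block matrix of the statement, with Koszul block $\delta_1(\x')$, off-diagonal block $\partial g$, and corner $-f$. This is the asserted presentation.

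The argument is essentially formal, so I do not anticipate a serious obstacle. The single point that genuinely differs from \Cref{Presentation of I} and must not be glossed over is that $\m$ is a proper, non-maximal ideal of $R$ here: one should record explicitly that $x_1,\ldots,x_n$ is still a regular sequence in $R$ (so that the Koszul resolution of $R/f\m$ above is valid) and that $g\in\m$, both of which follow from the $x_{n+1}$-monoid hypothesis via (\ref{f,g decomp}). Some care is also needed with the sign and indexing conventions in the mapping cone so that the entries land as $\delta_1(\x')$, $\partial g$, $0$, $-f$ in the claimed positions.
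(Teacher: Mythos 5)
Your proposal is correct and follows exactly the route the paper indicates (and that \cite[2.1]{RS22} uses): the mapping cone over the sequence $0\to (R/(f\m:g))(-d)\to R/f\m\to R/I\to 0$, with $f\m:g=(f)$ and the lift $\phi_1=\partial g$, whose existence hinges on $g\in\m R$ coming from the $x_{n+1}$-monoid hypothesis — precisely the point the paper flags as crucial. No substantive differences to report.
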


We note that the presentation of the ideal $I$ is nearly identical to the presentation of the ideal of a de Jonqui\`eres transformation in \Cref{Presentation of I} (\cite[2.2]{HS14}). However, we note that the fact that $g\in \m R$ is crucial to this construction, so that $\partial g$ exists.

\begin{rem}\label{gen de Jonq S(I) remark}
As before, we may use the presentation matrix $\varphi$ to describe the symmetric algebra of $I$. Indeed, the symmetric algebra is $\S(I) \cong R[y_1,\ldots,y_{n+1}]/\L$, where $\L= ([y_1\ldots y_{n+1}]\cdot \varphi)$. Noting that $\delta_1(\x')$ is the first Koszul differential on $\x'= x_1,\ldots,x_n$, similar to the observation in \Cref{S(I) remark} it follows that $\L = I_2(\psi) +(h)$ where $\psi$ is as before in (\ref{psi defn}) and $h$ is the product of $[y_1 \ldots y_{n+1}]$ and the last column of $\varphi$.
\end{rem}

Notice that $R=k[x_1,\ldots,x_{n+1}]$, yet the matrix $\psi$ only involves the indeterminates $x_1,\ldots,x_n$. As such, the constructions presented here will be analogous, and in most cases identical, to those of the previous sections. Moreover, as $I=(f\m,g)$, the ideal $\m=(x_1,\ldots,x_n)$ will play a similar role as before, but we must proceed with caution, as this is not the maximal $R$-ideal. Additionally, we must take care as the polynomial $h$ does involve the last indeterminate $x_{n+1}$. In particular, from bidegree considerations and (\ref{f,g decomp}), we note that $h \in \m^{d-2}$ yet it has bidegree $(d-1,1)$.

\begin{rem}\label{gen de Jonq downgraded sequence remark}
We may employ the method of downgraded sequences as in \Cref{algorithm defn}, where $\partial-$ denotes a column as in \Cref{del notation} with respect to $x_1,\ldots,x_n$, the generators of $\m$. As noted, $h\in \m^{d-2}$ and so this sequence has only $d-1$ iterations beginning with $h_1=h$. Writing $h_1,\ldots,h_{d-1}$ to denote a full downgraded sequence, we note that each $h_i$ is of bidegree $(d-i,i)$. Moreover, for each $1\leq i\leq d-1$, we write $\J_i$ to denote the ideal $\J_i=I_2(\psi)+(h_1,\ldots,h_i)$. Repeating the arguments of \Cref{Algorithm Section} shows these ideals are well defined.
\end{rem}

As we will see, the fact that the length of a full downgraded sequence is one less than the degree of $h$, with respect to $x_1,\ldots,x_{n+1}$, will account for many of the figures in this section differing from those in the previous sections.

\begin{ex}\label{ex gen de Jonq}
Let $R=k[x_1,x_2,x_3,x_4]$ and let $\m=(x_1,x_2,x_3)$ denote the maximal ideal of the subring $R'=k[x_1,x_2,x_3]$. Let $f=x_1^2x_4$ and $g=x_1^2x_2^2+x_3^3x_4$, and consider the ideal $I=(f\m,g)=(x_1^3x_4, x_1^2x_2x_4, x_1^2x_3x_4, x_1^2x_2^2+x_3^3x_4)$. As $f$ and $g$ are $x_4$-monoids, it follows that $I$ satisfies the assumptions of \Cref{generalized de Jonq setting}. Following the construction in \Cref{Presentation of I - generalized de Jonq}, one may take $h=x_1x_2^2y_1+x_3^2x_4y_3-x_1^2x_4y_4$. With this and \Cref{algorithm defn}, a downgraded sequence of $h$, with respect to $x_1,x_2,x_3$, may be taken as
\[
 \begin{array}{ccccc}
  h_1=x_1x_2^2y_1+x_3^2x_4y_3-x_1^2x_4y_4, &\quad &  h_2=x_2^2y_1^2+x_3x_4y_3^2-x_1x_4y_1y_4, &\quad  & h_3=x_2y_1^2y_2+x_4y_3^3-x_4y_1^2y_4. 
\end{array}
\]
\end{ex}

As noted in \Cref{Algorithm Section}, this algorithm may be realized as an exchange process for the variables $x_1,\ldots,x_n$ in each monomial of the downgraded polynomials at each step. Since $f$ and $g$ are $x_{n+1}$-monoids, following (\ref{f,g decomp}) there can be only $d-1$ iterations in the setting of a generalized de Jonqui\`eres transformation. Indeed, $x_{n+1}$ can never be exchanged, similar to the observations in \Cref{gen de Jonq downgraded sequence remark}. As such, the algorithm terminates at $h_3$ in \Cref{ex gen de Jonq} when $d=4$, differing from the behavior in the previous setting of \Cref{Algorithm Section}.

\begin{rem}\label{defining ideal already known}
With the assumptions of \Cref{generalized de Jonq setting}, the defining ideal $\J$ of the Rees ring $\R(I)$ is known \cite[2.6]{RS22}, and may be realized as $\J=\J_{d-1}$, where $\J_{d-1}=I_2(\psi)+(h_1,\ldots,h_{d-1})$ as above. This was proven using different methods, but we note that the techniques of the previous sections may be adapted to this setting to show this. However, as our main objective is to determine the Cohen-Macaulayness of $\R(I)$ and address the conjecture in \cite[2.10]{RS22}, we forgo much of this and only introduce previous constructions as necessary.
\end{rem}

Interestingly, $I$ is an ideal that is not of linear type (unless $d=2$), yet the defining ideal of its Rees ring has no fiber equation, i.e. a defining equation of bidegree $(0,*)$. Thus $\J\subseteq \n=(x_1,\ldots,x_{n+1})$, and so the fiber ring is $\F(I) \cong k[y_1,\ldots,y_{n+1}]$. However, this is also clear as $\deJonq:\, \P^n\dashrightarrow \P^n$ is a Cremona map, and $\F(I)$ is the coordinate ring of its closed image.

\begin{prop}\label{gen de Jonq - J a colon ideal}
With the assumptions of \Cref{generalized de Jonq setting}, we have $\J = \L:\m^{d-2}$.
\end{prop}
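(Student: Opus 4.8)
The plan is to derive \Cref{gen de Jonq - J a colon ideal} from the already–available description of $\J$ together with two facts carried over verbatim from \Cref{Main Section} and \Cref{Algorithm Section}. First I would establish the generalized analog of \Cref{J a saturation}: $\J = \L:\m^\infty$. Since $R$ is a domain, $\R(I)$ is a domain and hence $\J$ is prime; moreover $\J\cap R=0$ (as $R$ is a subring of $\R(I)$), so $\m\nsubseteq\J$, and since $\L\subseteq\J$ this gives $\L:\m^\infty\subseteq\J$. For the reverse inclusion, for any prime $\p\in\spec(R)\setminus V(\m)$ some $x_i\notin\p$, so $f\in I_\p$ and $I_\p=(f,g)_\p$; as $\gcd(f,g)=1$ and $R$ is a unique factorization domain, $f,g$ is a regular sequence, so $(f,g)_\p$ is of linear type and $\L_\p=\J_\p$. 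Hence $\J/\L$ is supported on $V(\m)$, therefore annihilated by a power of $\m$, which yields $\J\subseteq\L:\m^\infty$.

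Next I would record the generalized analog of \Cref{Ji contained in colons}: for $1\leq i\leq d-1$ one has $\J_i\subseteq\L:\m^{i-1}$. The induction is unchanged, since its only inputs are the shape of $\psi$ and the downgrading recursion: the identities $\overline{x_jy_k}=\overline{x_ky_j}$ for $1\leq j,k\leq n$ hold modulo $I_2(\psi)$ (with $\psi$ the same $2\times n$ matrix of indeterminates), and together with $h_i=[y_1\ldots y_n]\cdot\partial h_{i-1}$ they give $\overline{x_jh_i}=\overline{y_jh_{i-1}}$ in $B/I_2(\psi)$, so $\m h_i+I_2(\psi)=(y_1,\ldots,y_n)h_{i-1}+I_2(\psi)\subseteq\J_{i-1}$, whence $\J_i=\J_{i-1}+(h_i)\subseteq\J_{i-1}:\m\subseteq\L:\m^{i-1}$. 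In particular $\J_{d-1}\subseteq\L:\m^{d-2}$. Now by \Cref{defining ideal already known} we have $\J=\J_{d-1}$ (this is \cite[2.6]{RS22}), so combining the two displays gives $\J=\J_{d-1}\subseteq\L:\m^{d-2}\subseteq\L:\m^\infty=\J$, which forces $\J=\L:\m^{d-2}$.

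The point worth flagging is why one cannot simply rerun the local-cohomology argument of \Cref{J a colon ideal}. There the bound came from applying \cite[3.8]{KPU20} to the graded strands of the $B$-resolution of $\S(I)$, using that $\m$ is the homogeneous maximal ideal of $R$. Here the polynomial ring is $R=k[x_1,\ldots,x_{n+1}]$ while $\m=(x_1,\ldots,x_n)$ is \emph{not} its homogeneous maximal ideal, and $H^0_{\mathfrak M_R}(\S_q(I))\subseteq H^0_\m(\S_q(I))$ runs the wrong way to be useful; worse, the generator $h$ is homogeneous only for the grading in which all of $x_1,\ldots,x_{n+1}$ have degree one (the bidegree $(d-1,1)$), not for the grading that makes $\m$ the irrelevant ideal, so the \cite{KPU20} degree bounds simply do not transfer to $H^0_\m(\S(I))$. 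Leaning on the known generation $\J=\J_{d-1}$ is what lets us bypass this obstruction, and the only real verification needed is that \Cref{J a saturation} and \Cref{Ji contained in colons} survive the passage to the generalized setting, which they do for the reasons indicated above.
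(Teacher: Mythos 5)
Your proposal is correct and follows essentially the same route as the paper: the easy containment $\L:\m^{d-2}\subseteq\J$ comes from $\J$ being prime with $\L\subseteq\J$ and $\m\nsubseteq\J$, and the reverse containment comes from the known generation $\J=\J_{d-1}$ of \cite[2.6]{RS22} together with the inductive colon argument of \Cref{Ji contained in colons} carried over to this setting. Your closing remark about why the local cohomology bound of \cite[3.8]{KPU20} does not transfer (since $\m$ is not the irrelevant ideal of $R$ here) matches the discussion the paper gives immediately after the proposition.
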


\begin{proof}
Certainly we have $\L:\m^{d-2} \subseteq \J$, as $\J$ is a prime ideal with $\L\subseteq \J$ and $\m\nsubseteq \J$. Moreover, as noted in \Cref{defining ideal already known}, we have $\J=I_2(\psi)+(h_1,\ldots,h_{d-1})$ by \cite[2.6]{RS22}. Repeating the argument in the proof of \Cref{Ji contained in colons} shows that $\J=I_2(\psi)+(h_1,\ldots,h_{d-1})\subseteq \L:\m^{d-2}$, giving the reverse containment.
\end{proof}

It is not difficult to show that $\J=\L:\m^{\infty}$ by modifying the arguments in \Cref{J a saturation}, however great care must be taken to show this saturation agrees with the ideal in \Cref{gen de Jonq - J a colon ideal}. Indeed, the obstacle lies in that $\L$ is an ideal of $R[y_1,\ldots,y_{n+1}]$, but $\m$ is not the homogeneous maximal $R$-ideal. Hence one cannot use \cite[3.8]{KPU20} directly as before to bound the generation degree of $H_\m^0(\S(I))$. One may employ other tools from local cohomology and more general results from \cite{KPU20}, but for the sake of brevity we opt for the short proof above, using that the generators of $\J$ are already known.


\subsection{Cohen-Macaulayness and generalized de Jonqui\`eres maps}

As before, we note that $f\m\subseteq I$, and so we may factor the natural map $R[y_1,\ldots,y_{n+1}]\rightarrow \R(I)$ through $A=\R(f\m)[y_{n+1}]$, exactly as before in (\ref{R(m) -> R(I) diagram}). Here we note that $\dim R=n+1$, and so $\dim \R(I) = n+2$ and $\dim A = n+3$, differing slightly from before. However, the kernel of the induced map $A\rightarrow \R(I)$ is thus prime of height one, exactly as in \Cref{approximation section}. Moreover, as noted in \Cref{R(fm) and R(m) isomorphic remark}, there is an isomorphism $\R(f\m) \cong \R(\m)$, and since $\m$ is generated by a regular sequence, we have $\R(\m) \cong R[y_1,\ldots,y_{n}]/I_2(\psi)$, where $\psi$ is as in (\ref{psi defn}).

\begin{rem}\label{Everything basically the same}
With the observation above, we may write $\overline{\,\cdot\,}$ to denote images modulo $I_2(\psi)$ once again, in the ring $A=R(f\m)[y_{n+1}]$. Moreover, we take $\K$ to denote the $R[y_1,\ldots,y_{n+1}]$-ideal $\K = I_2(\psi)+(x_n,y_n)$, exactly as before.

We note that $\K \subseteq R'[y_1,\ldots,y_n]$ and $\m\subseteq R'$, hence some care must be taken. However, since $R'\subseteq R$ is a free extension, the arguments of \Cref{approximation section} may easily be repeated to show that the ideals $\K$ and $\m$ retain their properties outlined in the previous sections. In particular, $\overline{\K}$ is strongly Cohen-Macaulay, of linear type, and $\overline{\K}^i = \overline{\K}^{(i)}$ for all $i$ by \Cref{Kbar SCM and generically a CI}, \Cref{Kbar linear type}, and the proof of \Cref{defining ideal theorem}. Moreover, the powers of $\overline{\K}$ and $\overline{\m}$ are linked, precisely as in \Cref{linkage prop}.
\end{rem}

With the observations of \Cref{Everything basically the same}, the remainder of this section is much more streamlined, with much of the necessary tools developed in previous sections. However, we are careful to note the few differences between the arguments of this setting and those of the previous one. In particular, we may address the Cohen-Macaulayness of $\R(I)$ after producing the following description of $\overline{\J}$, noting that $\R(I) \cong A/\overline{\J}$.

\begin{prop}\label{gen de Jonq - Jbar divisorial}
With the assumptions of \Cref{generalized de Jonq setting}, we have $\overline{\J} = \frac{\overline{h}\overline{\K}^{d-2} }{\overline{x_n}^{d-2}}$.
\end{prop}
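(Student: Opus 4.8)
The plan is to follow the blueprint of \Cref{Jbar=D} almost verbatim, adapting the power of $\m$ to the generalized setting. First I would establish the containment $\D' \subseteq \overline{\J}$, where $\D' = \frac{\overline{h}\,\overline{\K}^{(d-2)}}{\overline{x_n}^{d-2}}$; by \Cref{Everything basically the same} this divisorial ideal is an honest $A$-ideal since $h \in \m^{d-2}$, and $\overline{\K}^{(d-2)} = \overline{\K}^{d-2}$. Using \Cref{gen de Jonq - J a colon ideal}, which gives $\J = \L : \m^{d-2}$, it suffices to show $\D' \cdot \overline{a} \subseteq \overline{\L} = (\overline{h})$ for every $a \in \m^{d-2}$; this follows by rewriting $\D' \cdot \overline{a} = \frac{\overline{a}\,\overline{\K}^{(d-2)}}{\overline{x_n}^{d-2}} \cdot \overline{h}$ and noting the first factor is an $A$-ideal by the linkage of the powers of $\overline{\K}$ and $\overline{\m}$ (the analog of \Cref{linkage prop}).

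Next, having $\D' \subseteq \overline{\J}$, I would argue equality by checking locally at all height-one primes of $A$, using that both ideals are unmixed of height one: $\overline{\J}$ is prime of height one (as noted, the kernel of $A \to \R(I)$ is prime of height one since $\R(I)$ is a domain of dimension $n+2$), and $\D' \cong \overline{\K}^{(d-2)}$ is unmixed of height one by the linkage results and \cite[0.1]{Huneke82}. For a height-one prime $\p \neq \overline{\m}$, the linkage analog gives $\overline{\K}^{(d-2)}_\p = (\overline{x_n}^{d-2})_\p : A_\p = (\overline{x_n}^{d-2})_\p$, so $\D'_\p = (\overline{h})_\p$, and likewise $\overline{\J}_\p = (\overline{h})_\p : A_\p = (\overline{h})_\p$ by \Cref{gen de Jonq - J a colon ideal}; hence they agree. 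For $\p = \overline{\m}$, I would show both localizations are the unit ideal: $\overline{\J}_{\overline{\m}} = A_{\overline{\m}}$ since $\F(I) \cong k[y_1,\ldots,y_{n+1}]$ forces $\overline{\J} \nsubseteq \overline{\m}$, and $\D'_{\overline{\m}} = \overline{\K}^{(d-2)}_{\overline{\m}} = A_{\overline{\m}}$ since $\overline{\K} \nsubseteq \overline{\m}$ (because $\overline{y_n} \notin \overline{\m}$). This gives $\D' = \overline{\J}$, and replacing $\overline{\K}^{(d-2)}$ by $\overline{\K}^{d-2}$ yields the claimed formula.

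The main obstacle I expect is not any single step but rather being careful that every ingredient from \Cref{approximation section} genuinely survives the change of ambient ring: here $\m = (x_1,\ldots,x_n)$ is not the homogeneous maximal ideal of $R = k[x_1,\ldots,x_{n+1}]$, and $\K, \psi$ live in $R'[y_1,\ldots,y_n]$ while $h$ involves $x_{n+1}$. The key point that makes the local analysis at $\overline{\m}$ go through is the bidegree/membership bookkeeping: although $h$ has bidegree $(d-1,1)$, one has $h \in \m^{d-2}$ (from the $x_{n+1}$-monoid decomposition \eqref{f,g decomp}), which is exactly the exponent appearing in $\J = \L:\m^{d-2}$ and in the divisor $\D'$. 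I would make explicit that $R' \subseteq R$ is a free extension so that the linkage statements and the identities $\overline{\K}^i = \overline{\K}^{(i)}$ transfer unchanged, as recorded in \Cref{Everything basically the same}; once that is in hand, the proof is a line-by-line translation of the proof of \Cref{Jbar=D} with $d-1$ replaced by $d-2$ throughout.
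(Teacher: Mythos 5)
Your proposal is correct and follows essentially the same route as the paper: both repeat the proof of \Cref{Jbar=D} with $d-1$ replaced by $d-2$, invoking \Cref{gen de Jonq - J a colon ideal} and the transferred linkage facts from \Cref{Everything basically the same}, and then identify $\overline{\K}^{(d-2)}$ with $\overline{\K}^{d-2}$. The only cosmetic difference is that you justify $\overline{\J}\nsubseteq\overline{\m}$ via the fiber ring, while the paper simply notes both are distinct height-one primes; either works.
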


\begin{proof}
One proceeds exactly as in the proof of \Cref{J a colon ideal} to show that $\overline{\J} = \frac{\overline{h}\overline{\K}^{(d-2)} }{\overline{x_n}^{d-2}}$, noting that $\J=\L:\m^{d-2}$ by \Cref{gen de Jonq - J a colon ideal}, where $\L= I_2(\psi)+(h)$ is the defining ideal of $\S(I)$, as noted in \Cref{gen de Jonq S(I) remark}. The only item that needs to be justified in a manner different from before is that $\overline{\J} \nsubseteq \overline{\m}$, as $\m$ is not the maximal $R$-ideal here. However, this is clear as both ideals are prime of height one and $\overline{\J} \neq \overline{\m}$. Then one uses that the powers and symbolic powers of $\overline{\K}$ agree, as noted in \Cref{Everything basically the same} and the proof of \Cref{defining ideal theorem}.
\end{proof}

With the description of $\overline{\J}$ in \Cref{gen de Jonq - Jbar divisorial}, we may now complete the objective of this section and determine the Cohen-Macaulayness of $\R(I)$. From \cite[2.9]{RS22} it is known that $\R(I)$ is almost Cohen-Macaulay, but left as a conjecture as to when $\R(I)$ is Cohen-Macaulay \cite[2.10]{RS22}. As such, we conclude this section and the article by answering this question, as follows. Once again, as noted in \Cref{Everything basically the same}, all properties of $\overline{\K}$ hold as in the previous sections, despite the slightly different assumptions of \Cref{generalized de Jonq setting}.

\begin{thm}\label{gen de Jonq - Cohen-Macaulayness of Rees algebra}
With the assumptions of \Cref{generalized de Jonq setting}, $\R(I)$ is almost Cohen-Macaulay, and is Cohen-Macaulay if and only if $d\leq n+1$. 
\end{thm}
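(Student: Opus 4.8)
The plan is to replay the proof of \Cref{Cohen-Macaulayness of Rees algebra} with the numerology of the present setting, where $\dim R = n+1$, $\dim A = n+3$ and $\dim \R(I) = n+2$, and where the relevant power of $\overline{\K}$ is $d-2$ rather than $d-1$. By \Cref{gen de Jonq - Jbar divisorial} we have $\overline{\J} = \tfrac{\overline{h}\,\overline{\K}^{d-2}}{\overline{x_n}^{d-2}}$, and since $\overline{h}/\overline{x_n}^{d-2}$ is a nonzerodivisor in the total ring of fractions of $A$, this gives an isomorphism of $A$-modules $\overline{\J} \cong \overline{\K}^{d-2}$. Next, as recorded in \Cref{Everything basically the same}, the ideal $\overline{\K}$ retains the properties established in \Cref{approximation section} and \Cref{Defining Ideal Section}; in particular it is of linear type, so $\overline{\K}^{d-2} \cong \S_{d-2}(\overline{\K})$, and the identification $\K = I_2(\psi)+(x_n,y_n)$ together with \cite[A2.14]{Eisenbud} gives $\overline{\K} \cong M := \coker\psi$ over $A$. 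Hence $\overline{\J} \cong \S_{d-2}(M)$.

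The heart of the argument is then a depth computation for $\S_i(M)$. Since $\psi$ is a $2\times n$ matrix of indeterminates, the modules $\S_i(M)$ are resolved over $B = R[y_1,\ldots,y_{n+1}]$ by the generalized Eagon-Northcott complexes \cite[A2.10, A2.14]{Eisenbud}, of length $n-1$ when $0 \le i \le n-1$ and of length $n$ when $i \ge n$ --- exactly as in the proof of \Cref{Cohen-Macaulayness of Rees algebra}, the complexes depending only on the matrix and not on the ambient polynomial ring. As $\dim B = 2n+2$ and the depth of an $A$-module computed over $A$ agrees with its depth over $B$, the Auslander-Buchsbaum formula yields
\[
\depth_A \S_i(M) = \begin{cases} n+3 & 0 \le i \le n-1,\\ n+2 & i \ge n.\end{cases}
\]
Specializing $i = d-2$ gives $\depth_A \overline{\J} = n+3$ when $d \le n+1$ and $\depth_A \overline{\J} = n+2$ when $d \ge n+2$.

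Finally I would push this through the short exact sequence $0 \to \overline{\J} \to A \to \R(I) \to 0$, recalling that $A$ is Cohen-Macaulay with $\depth A = n+3$. When $d \le n+1$, the depth lemma gives $\depth \R(I) \ge \min\{\depth\overline{\J}-1,\depth A\} = n+2 = \dim\R(I)$, so $\R(I)$ is Cohen-Macaulay. When $d \ge n+2$, the same lemma gives $\depth\R(I) \ge n+1$, and were $\R(I)$ Cohen-Macaulay (i.e.\ $\depth\R(I) = n+2$) the companion inequality $\depth\overline{\J} \ge \min\{\depth\R(I)+1,\depth A\} = n+3$ would contradict $\depth\overline{\J} = n+2$; thus $\depth\R(I) = n+1$. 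In either case $\dim\R(I) - \depth\R(I) \le 1$, so $\R(I)$ is almost Cohen-Macaulay, and it is Cohen-Macaulay precisely when $d \le n+1$, which is the assertion and confirms \cite[2.10]{RS22}.

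Essentially all of the genuine content has already been assembled, so the argument is mostly bookkeeping; the one point requiring care is that every property of $\overline{\K}$ invoked above --- strong Cohen-Macaulayness, linear type, and the coincidence $\overline{\K}^i = \overline{\K}^{(i)}$ --- really does survive the move from \Cref{main setting} to \Cref{generalized de Jonq setting}, where $\m$ is no longer the homogeneous maximal ideal of the ambient ring and $\K$ sits inside $R'[y_1,\ldots,y_n]$. This is exactly what \Cref{Everything basically the same} absorbs, using that $R' \subseteq R$ is free, so no new obstacle arises; what remains is simply to keep track of the off-by-one shifts --- $\dim A = n+3$ and exponent $d-2$ --- that separate this case from \Cref{Cohen-Macaulayness of Rees algebra}.
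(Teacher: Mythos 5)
Your proposal is correct and follows essentially the same route as the paper: the isomorphism $\overline{\J}\cong\overline{\K}^{d-2}$ from \Cref{gen de Jonq - Jbar divisorial}, the Eagon--Northcott depth computation for the symmetric powers of $\coker\psi$ adjusted for $\dim R=n+1$, and the depth chase along $0\to\overline{\J}\to A\to\R(I)\to 0$. The only difference is that you spell out the depth-lemma bookkeeping in slightly more detail than the paper does.
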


\begin{proof}
With the observations of \Cref{Everything basically the same}, by repeating the argument of the proof of \Cref{Cohen-Macaulayness of Rees algebra} we see that 
\begin{equation}\label{gen de Jonq - depth Kbar^i}
\depth \overline{\K}^i = \left\{
     \begin{array}{ll}
       n+3 & \text{if $0\leq i\leq n-1$,}\\[1ex]
       n+2 & \text{if $i\geq n$.}
     \end{array}
   \right.
\end{equation}
Here we note that $\dim R =n+1$ in this setting, hence the difference from (\ref{depth Kbar^i}) after using the Auslander-Buchsbaum formula. From \Cref{gen de Jonq - Jbar divisorial}, it follows that there is an isomorphism $\overline{\J} \cong \overline{\K}^{d-2}$. With this and (\ref{gen de Jonq - depth Kbar^i}), by comparing depths along the short exact sequence 
$$0\rightarrow \overline{\J} \rightarrow A\rightarrow \R(I) \rightarrow 0,$$
we have
\begin{equation}\label{gen de Jonq depth R(I)}
\depth \R(I) = \left\{
     \begin{array}{ll}
       n+2 & \text{if $0\leq d-2\leq n-1$,}\\[1ex]
       n+1 & \text{if $d-2\geq n$.}
     \end{array}
   \right.
\end{equation}
Again noting that $\dim R =n+1$, we have $\dim \R(I) =n+2$. Thus it follows that $\R(I)$ is almost Cohen-Macaulay, and is Cohen-Macaulay if and only if $d-2\leq n-1$, i.e. $d\leq n+1$.
\end{proof}

As such, \Cref{gen de Jonq - Cohen-Macaulayness of Rees algebra} answers Conjecture 2.10 of \cite{RS22} in the affirmative.


\end{document}